\numberwithin{equation}{section}
\theoremstyle{plain}
\newtheorem{theorem}[equation]{Theorem}
\newtheorem{corollary}[equation]{Corollary}
\newtheorem{lemma}[equation]{Lemma}
\theoremstyle{definition}
\newtheorem{definition}[equation]{Definition}
\newtheorem{example}[equation]{Example}
\newtheorem{remark}[equation]{Remark}
\newtheorem*{openproblem}{Open Problem}
\numberwithin{equation}{section}
\newcommand{\Z}{{\mathbb Z}}
\newcommand{\R}{{\mathbb R}}
\newcommand{\N}{{\mathbb N}}
\newcommand{\Om}{\Omega}
\providecommand{\vint}[1]{\mathchoice
          {\mathop{\vrule width 5pt height 3 pt depth -2.5pt
                  \kern -9pt \kern 1pt\intop}\nolimits_{\kern -5pt{#1}}}
          {\mathop{\vrule width 5pt height 3 pt depth -2.6pt
                  \kern -6pt \intop}\nolimits_{\kern -3pt{#1}}}
          {\mathop{\vrule width 5pt height 3 pt depth -2.6pt
                  \kern -6pt \intop}\nolimits_{\kern -3pt{#1}}}
          {\mathop{\vrule width 5pt height 3 pt depth -2.6pt
                  \kern -6pt \intop}\nolimits_{\kern -3pt{#1}}}}
\newcommand{\eps}{\varepsilon}
\newcommand{\loc}{\mathrm{loc}}
\newcommand{\BV}{\mathrm{BV}}
\newcommand{\ch}{\text{\raise 1.3pt \hbox{$\chi$}\kern-0.2pt}}
\DeclareMathOperator{\Mod}{Mod}
\DeclareMathOperator{\dist}{dist}
\DeclareMathOperator{\diam}{diam}
\DeclareMathOperator{\Lip}{Lip}
\DeclareMathOperator{\supp}{spt}
\begin{document}

\title[BV functions and Nonlocal functionals in metric measure spaces]{BV functions and nonlocal functionals\\
	 in metric measure spaces}
\author[P. Lahti]{Panu Lahti}
\address{Panu Lahti, Academy of Mathematics and Systems Science, Chinese Academy of Sciences,
Beijing 100190, PR China\\
Email: {\tt panulahti@amss.ac.cn}}

\author[A. Pinamonti]{Andrea Pinamonti}
\address{Andrea Pinamonti, 
Department of Mathematics, University of Trento, Trento 38123, Italy,
Email: {\tt andrea.pinamonti@unitn.it}}

\author[X. Zhou]{Xiaodan Zhou}
\address{Xiaodan Zhou, Analysis on Metric Spaces Unit, Okinawa Institute of Science and Technology G
raduate University, 1919-1 Tancha, Onna-son, 
Okinawa 904-0495, Japan\\
Email: {\tt xiaodan.zhou@oist.jp}}

\subjclass[2020]{46E36, 26B30}
\keywords{Sobolev function, function of bounded variation,
metric measure space, nonlocal functional, Poincar\'e inequality}

\begin{abstract}
	We study the asymptotic behavior
	of three classes of nonlocal functionals in complete metric spaces equipped with a doubling
	measure and supporting a Poincar\'e inequality. We show that the limits of these nonlocal functionals
	are comparable to the variation $\Vert Df\Vert(\Om)$ or the Sobolev semi-norm
	$\int_\Om g_f^p\, d\mu$, which extends Euclidean results to metric measure spaces.
	In contrast to
	the classical setting, we also give an example to show that the limits are not always equal to
	the corresponding total variation even for Lipschitz functions.
\end{abstract}

\maketitle

\section{Introduction}

Consider a sequence $\{\rho_i\}_{i=1}^{\infty}$ of mollifiers in $L_{\loc}^1(0,\infty)$, $n\ge 1$,
for which
\begin{equation}\label{eq:intro rhoi conditions}
	\int_{0}^{\infty}\rho_i(r)r^{n-1}\,dr=1\ \textrm{ for all }i\in\N
	\quad\textrm{and}\quad
	\lim_{i\to \infty}\int_{\delta}^{\infty}\rho_i(r)r^{n-1}\,dr=0\quad\textrm{for all }\delta>0.
\end{equation}
Let $1\le p<\infty$.
For an open set $\Om\subset\R^n$ and function $f\in W_{\loc}^{1,p}(\Om)$, we define the Sobolev seminorm by
\[
|f|_{W^{1,p}(\Om)}:=\left(\int_{\Om}|\nabla f|^p\,dx\right)^{1/p},
\]
and if $f\notin W_{\loc}^{1,p}(\Om)$, then we let $|f|_{W^{1,p}(\Om)}=\infty$.
Given an open set $\Om\subset \R^n$ and  a function $f$ on $\Om$,
 we define
\[
	I_{p,i}(f,\Om):=\int_{\Om}\int_{\Om} \frac{|f(x)-f(y)|^p}{|x-y|^p}\rho_i(|x-y|)\,dx\,dy.
\]
We denote the energy by
\[
E_{p}(f,\Om)
:=\begin{cases}
	\Vert Df\Vert(\Om) &\textrm{when }p=1,\\
	|f|^p_{W^{1,p}(\Om)} &\textrm{when }1<p<\infty.
\end{cases}
\]

The so-called BBM formula, shown by Bourgain, Brezis, and Mironescu \cite[Theorem 3]{BBM} states that when $\Om\subset\R^n$ is a smooth, bounded domain and
$1<p<\infty$, then for every $f\in L^p(\Om)$ we have
\begin{equation}\label{eq: BBM}
    \lim_{i\to\infty}I_{p,i}(f,\Om)=K_{p,n}E_{p}(f,\Om),
\end{equation}
where $K_{p,n}>0$ is a constant depending only on $p$ and $n$. Later, D\'avila \cite{Dav} generalized this result to functions of bounded variation (BV functions)
$f$ and their variation measures $\Vert Df\Vert(\Om)$.
He showed that when $\Om\subset\R^n$ is a bounded domain with Lipschitz boundary,
then for every $f\in L^1(\Om)$ (we understand $\Vert Df\Vert(\Om)=\infty$ if $f\notin \BV(\Om)$), the above equality also holds for $p=1.$

Several different generalizations of these results in Euclidean spaces
have been considered e.g.
by Ponce \cite{Pon}, Leoni--Spector \cite{LS1,LS2}, Nguyen \cite{N1},
Brezis--Nguyen \cite{BN1,BN3, BN2, BN18}, 
Brezis--Van Schaftingen--Yung \cite{Bre2,Bre3,Bre4},
Nguyen--Pinamonti--Vecchi--Squassina \cite{NPVS,PVS1},    
Garofalo--Tralli \cite{GT, GT1}, Comi-Stefani \cite{CS,CS1,CS2},
Maalaoui--Pinamonti \cite{MP1}, Brena--Pasqualetto--Pinamonti \cite{BPP} and references therein.

On the other hand, in the past three decades, there has been significant progress in the study
of various aspects of first order analysis on metric measure spaces including the theory of first
order Sobolev spaces, functions of bounded variation and their relation to variational problems
and partial differential equations, see \cite{AT, Hj, HKST,Mir} and references therein.
In particular, in metric measure spaces $(X,d,\mu)$ with the measure being doubling and the space
supporting a Poincar\'e inequality (also known as PI spaces), fruitful results are obtained.

In \cite[Remark 6]{Bre}, Brezis raised a question about the relation between the BBM formula
and Sobolev spaces on metric measure spaces  \cite{AT,Hj,  KoSc}. Di Marino--Squassina \cite{DiMaS}
proved new characterizations of Sobolev and BV spaces in PI spaces in the spirit of the BBM formula.
A similar result was proved previously in Ahlfors-regular spaces by Munnier \cite{Mun}.
G\'orny \cite{Gor} and Han--Pinamonti \cite{HP}, respectively, studied the problem in certain
PI spaces that ``locally look like'' Euclidean spaces, or respectively finite-dimensional
Banach spaces or Carnot groups.

Given a metric measure space $(X,d,\mu)$ and an open set $\Om\subset X$, consider the following functional and energy:
\begin{equation}\label{eq:functional basic}
	I_{p,i}(f,\Om):=\int_{\Om}\int_{\Om} \frac{|f(x)-f(y)|^p}{d(x,y)^p}\rho_i(x,y)\,d\mu(x)\,d\mu(y),
\end{equation}
and
\[
E_{p}(f,\Om)
:=\begin{cases}
	\Vert Df\Vert(\Om) &\textrm{when }p=1,\\
	\int_{\Om}g_f^p\,d\mu &\textrm{when }1<p<\infty,
\end{cases}
\]
where $g_f$ is the so-called minimal $p$-weak upper gradient of $f$ which equals $|\nabla f|$ for Sobolev functions $f\in W^{1,p}(\Om)$ in the Euclidean space. See Section \ref{sec:definitions} for precise definition. If $f$ is not in $\BV(\Om)$ when $p=1$,
respectively not in $\widehat{N}^{1,p}(\Om)$ when $1<p<\infty$,  then we let $E_{p}(f,\Om)=\infty$. 

In our previous paper \cite{LPZ}, we studied the BBM formula for a more general class of mollifiers
$\rho_i$ which contain the mollifiers considered in \cite{DiMaS, Gor, HP} as special cases.
The explicit conditions will
be specified in Section \ref{sec:definitions}. Similar to the BBM formula \eqref{eq: BBM},
we obtained the following \cite[Theorem 1.1]{LPZ}:

\begin{theorem}\label{thm:main previous}
	Let $1\le p<\infty$, suppose $\mu$ is doubling, and let $\Om\subset X$ be open.
		If $\{\rho_i\}_{i=1}^{\infty}$ is a sequence of mollifiers satisfying condition
	\eqref{eq:rho hat minorize}, and $f\in L^p(\Om)$, then
	\[
		\liminf_{i\to\infty}I_{p,i}(f,\Om)\ge C_1 E_{p}(f,\Om).
	\]
	If  $X$ supports a $(p,p)$-Poincar\'e inequality, $\{\rho_i\}_{i=1}^{\infty}$ is a sequence of mollifiers satisfying condition
	\eqref{eq:rho hat majorize},
	$\Om$ is a strong $p$-extension domain, and $f\in L^p(\Om)$, then
	\[
			 \limsup_{i\to\infty}I_{p,i}(f,\Om)
			\le C_2 E_{p}(f,\Om).
	\]
	Here $C_1\le C_2$ are constants that depend only on $p$,
	the doubling constant of the measure, the constants in the
	Poincar\'e inequality, and the constant $C_{\rho}$ associated with the mollifiers.
\end{theorem}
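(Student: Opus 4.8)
The plan is to prove the two estimates separately, since the lower and upper bounds require genuinely different tools.

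\medskip

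\emph{The lower bound.} To prove $\liminf_{i\to\infty}I_{p,i}(f,\Om)\ge C_1 E_p(f,\Om)$, I would first reduce to the case where the liminf is finite, and then use a discretization and compactness argument. The key idea is to relate $I_{p,i}$ to difference quotients along a well-chosen scale. Condition \eqref{eq:rho hat minorize} should give a pointwise minorization of a radial average of $\rho_i$ by a genuine BBM-type mollifier, which lets us bound $I_{p,i}(f,\Om)$ from below by an expression of the form $c\int_\Om \vint_{B(x,r_i)}\frac{|f(x)-f(y)|^p}{r_i^p}\,d\mu(y)\,d\mu(x)$ after integrating out the angular variable via the doubling property. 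For such ``single-scale'' functionals, the standard approach in the metric setting (as in Di Marino--Squassina) is: regularize $f$ by discrete convolutions $f_{r}$ adapted to a maximal $r$-separated net, observe that the upper gradient of $f_r$ (or the variation, when $p=1$) is controlled by exactly these difference quotients, and then pass to the limit using lower semicontinuity of $E_p$ with respect to $L^p_{\loc}$-convergence. When $p=1$ one works with the lower semicontinuity of the total variation; when $p>1$ one uses that a bound on $\int g_{f_r}^p\,d\mu$ plus $L^p$-convergence forces $f\in N^{1,p}$ with the right bound. No Poincar\'e inequality is needed here, which matches the statement.

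\medskip

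\emph{The upper bound.} For $\limsup_{i\to\infty}I_{p,i}(f,\Om)\le C_2 E_p(f,\Om)$, assume $f\in N^{1,p}(\Om)$ (resp.\ $\BV(\Om)$); by the strong $p$-extension property we may extend $f$ to all of $X$ with $E_p$ comparably controlled, so it suffices to bound $I_{p,i}(f,X)$. Split the double integral at the scale $\delta$: the far part $\{d(x,y)\ge\delta\}$ is handled by the second condition in \eqref{eq:intro rhoi conditions}-type decay built into \eqref{eq:rho hat majorize}, giving a term $\to 0$ after using $f\in L^p$. For the near part $\{d(x,y)<\delta\}$, apply the $(p,p)$-Poincar\'e inequality on balls $B(x,Cd(x,y))$ to estimate $|f(x)-f(y)|^p$ by $d(x,y)^p\vint_{B}g_f^p\,d\mu$ (resp.\ by $d(x,y)^p\frac{\|Df\|(B)}{\mu(B)}$ in the BV case, via the $(1,1)$-Poincar\'e inequality for BV functions). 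Substituting this, the $d(x,y)^p$ cancels the denominator, and a Fubini/dyadic-annuli argument together with the doubling property and the total-mass normalization $\int_0^\infty \widehat\rho_i(r)\cdots\,dr\lesssim C_\rho$ collapses the double integral to $C\int_X g_f^p\,d\mu$ uniformly in $i$. Then let $\delta\to0$.

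\medskip

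\emph{Main obstacle.} The delicate point is the interplay between the non-radial, two-variable mollifiers $\rho_i(x,y)$ and the geometry: in the Euclidean BBM argument one uses rotational symmetry to reduce to a one-dimensional kernel, but here I expect one must instead use the condition $C_\rho$ to compare $\rho_i(x,y)$ with its ``radialization'' $\widehat\rho_i(d(x,y))$ and then exploit the doubling property to integrate over metric annuli $B(x,2^{-j})\setminus B(x,2^{-j-1})$. Controlling the constants $C_1\le C_2$ so they depend only on the listed data — and in particular making the discretization in the lower bound compatible with whatever scale the minorization in \eqref{eq:rho hat minorize} selects — is where the real work lies; the rest is bookkeeping with Fubini and doubling.
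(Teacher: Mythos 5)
The paper itself does not prove this theorem --- it cites the authors' earlier paper [LPZ] --- but Sections 4 and 5 of the present paper run exactly this toolkit for the functionals $\Psi$, $\Phi$, $\Lambda$, so a comparison is fair. Your lower-bound plan (minorize by the single-scale kernel coming from \eqref{eq:rho hat minorize}, build discrete convolutions on a covering at scale $\sim r_i$, bound their pointwise Lipschitz constant by averaged difference quotients, then pass to the limit via lower semicontinuity of the total variation when $p=1$ and via weak compactness plus Mazur's lemma when $1<p<\infty$) is precisely the approach used in the proof of Theorem \ref{thm:lower bound}, and you are right that no Poincar\'e inequality is needed there.

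The upper bound, however, has a gap as written. You propose to ``apply the $(p,p)$-Poincar\'e inequality on balls $B(x,Cd(x,y))$ to estimate $|f(x)-f(y)|^p$ by $d(x,y)^p\vint_{B}g_f^p\,d\mu$.'' The $(p,p)$-Poincar\'e inequality is an integral inequality and does not yield such a pointwise bound; the term $|f(y)-f_{B}|^p$ with a fixed $y$ is precisely what it does not control. What actually works, as in the proof of Theorem \ref{thm:one direction}, is the telescoping estimate $|f(y)-f_{B(y,r)}|\le Cr\,\bigl(\mathcal M_{\lambda r}g^{p'}(y)\bigr)^{1/p'}$ valid at Lebesgue points, after using Keith--Zhong self-improvement to drop to an exponent $p'<p$ so that $(\mathcal M g^{p'})^{p/p'}\in L^1(X)$; the restricted maximal function then furnishes the $L^1$ majorant needed to apply dominated convergence against the decay $\lim_{i\to\infty}\sum_{j\ge M}d_{i,j}=0$ built into \eqref{eq:rho hat majorize}. (In the $p=1$ BV case one must additionally route through the $1$-weak upper gradient comparison \eqref{eq:from BV to Sobolev} rather than apply a BV Poincar\'e inequality pointwise.) A smaller point: in a metric space there is no ``angular variable'' to integrate out --- condition \eqref{eq:rho hat minorize} already hands you the explicit kernel $C_\rho^{-1}\frac{d(x,y)^p}{r_i^p}\frac{\ch_{B(y,r_i)}(x)}{\mu(B(y,r_i))}$ directly, and doubling enters the lower bound only through the bounded-overlap count in Lemma \ref{lem:covering lemma}, not through any radialization. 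These are repairable, but the Poincar\'e step would not close as stated.
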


In general, contrary to Euclidean spaces, we cannot expect
$C_1=C_2$ in the above estimates,
as we demonstrate in \cite[Section 7]{LPZ}.

It is natural and interesting to ask whether such asymptotic behaviors of \eqref{eq:functional basic}
still hold for other convex or nonconvex functionals. The pursuit of such question is also driven
by applications. For example, Gilboa--Osher \cite{GiOs} consider nonlocal image functionals to
improve effectiveness in image denoising and reconstruction. One prototype functional considered
in \cite{GiOs} corresponds to a special case with $p=1$, $q=2$ of the nonlocal functionals 
\[
\int_{\Om}\left[\int_{\Om} \Big(\frac{|f(x)-f(y)|^p}{|x-y|^p}\Big)^q\rho_i(|x-y|)\,dx\right]^{1/q}\,dy
\]
studied by Leoni--Spector \cite{LS1,LS2}.
Leoni--Spector also obtain $\Gamma$-convergence of the above functional to a constant multiple of $\int_{\Om}|\nabla f|^p\,dx$ for all $f\in W^{1,p}(\Om)$ and $p\in (1,\infty)$ under appropriate assumptions on $q$ and $\rho_i$ \cite[Theorem 1.2]{LS2}.

Furthermore, Brezis--Nguyen \cite{BN1} study the asymptotic behavior of two
convex nonlocal functionals which converge formally to the total variation.
In \cite{BN2, BN4}, Brezis--Nguyen consider approximation of $\int_\Omega |\nabla u|^p$
with $p\ge 1$ for nonconvex nonlocal functionals motivated by questions arising in image processing.

Inspired by the work of Bourgain-Brezis-Mironescu \cite{BBM}, Leoni-Spector \cite{LS1,LS2},
and Brezis-Nguyen \cite{BN1, BN2, BN18, BN4}, the main goal of the current paper is to study
the asymptotic behavior of certain convex or nonconvex nonlocal functionals in metric measure spaces.
In particular, we focus on the following three functionals from the work Leoni-Spector \cite{LS1}
and Brezis--Nguyen \cite{BN1, BN2, BN18,  BN4}:

\begin{eqnarray}
&\nonumber \Psi_{p,i}(f,\Om):= & \left(\int_{\Om}\int_{\Om} \frac{|f(x)-f(y)|^{p+\eps_i}}{d(x,y)^{p+\eps_i}}\rho_i(x,y)\,d\mu(x)\,d\mu(y)
\right)^{p/(p+\eps_i)},\quad\eps_i\searrow 0,\\
&\nonumber \Phi_{p,q,i}(f,\Om):= & \int_{\Om}\left[\int_{\Om} \left(\frac{|f(x)-f(y)|^p}{d(x,y)^p}\right)^q\rho_i(x,y)\,d\mu(x)\right]^{1/q}\,d\mu(y),\\
&\nonumber  \Lambda_{p,\delta}(f,\Om):= & \int_{\Om}\int_{\Om} \frac{\delta^p\varphi(|f(x)-f(y)|/\delta)}{\mu(B(x,d(x,y)))d(x,y)^p} 
\,d\mu(x)\,d\mu(y),\quad\delta>0.
\end{eqnarray}


The conditions on the mollifiers $\rho_i$ and $\varphi\colon [0,\infty)\to [0,\infty)$ will be made explicit in Section \ref{sec:definitions} and some typical examples of $\rho_i$ and $\varphi$ will be given in Section \ref{sec:examples}. In particular, the conditions posed on $\varphi$ 
 imply that $\Lambda_{p,\delta}$ is generally a nonconvex functional. We obtain the following theorems of the asymptotic behaviors on the above three functionals respectively. 


\begin{theorem}\label{thm:main eps}
	Let $1\le p<\infty$ and suppose $\mu$ is doubling, suppose
	$\{\rho_i\}_{i=1}^{\infty}$ is a sequence of mollifiers satisfying condition
	\eqref{eq:rho hat minorize}, \eqref{eq:rho hat majorize},
	and let $\Om\subset X$ be open and bounded.
	
	If $f\in L^p(\Om)$, then
	\[
	C_1' E_p(f,\Om)
	\le \liminf_{i\to\infty}\Psi_{p,i}(f,\Om).
	\]
	
	If, on the other hand, 
$X$ supports a $(1,p)$--Poincar\'e inequality and there is $q\in (p,\infty)$ such that
$\Om\subset X$ is a strong $p$-extension and strong $q$-extension domain, and
$f\in \widehat{N}^{1,q}(\Om)$, then
	\[
			\limsup_{i\to\infty}\Psi_{p,i}(f,\Om)
			\le C_2' E_p(f,\Om).
	\]
	Here $C_1'\le C_2'$ are constants that depend only on $p$,
	the doubling constant of the measure, the constants in the
	Poincar\'e inequality, and the constant $C_{\rho}$ associated with the mollifiers.
\end{theorem}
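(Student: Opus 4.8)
The plan is to express $\Psi_{p,i}(f,\Om)$ in terms of the functionals $I_{p,i}(f,\Om)$ already controlled by Theorem~\ref{thm:main previous} and to absorb the floating exponent $p_i:=p+\eps_i$ through H\"older's inequality. Write $g(x,y):=|f(x)-f(y)|/d(x,y)$ and let $\nu_i$ denote the measure $\rho_i(x,y)\,d\mu(x)\,d\mu(y)$ on $\Om\times\Om$, so that $\Psi_{p,i}(f,\Om)=\big(\int_{\Om\times\Om}g^{p_i}\,d\nu_i\big)^{p/p_i}=I_{p_i,i}(f,\Om)^{p/p_i}$. Since $\Om$ is bounded we have $\mu(\Om)<\infty$, and the normalization of the mollifiers from Section~\ref{sec:definitions} gives $A:=\sup_i\nu_i(\Om\times\Om)<\infty$; every H\"older correction factor appearing below is a bounded quantity raised to an exponent tending to $0$, hence tends to $1$.

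For the lower bound, given $f\in L^p(\Om)$ I would apply H\"older's inequality with the conjugate exponents $p_i/p$ and $p_i/\eps_i$ to obtain $I_{p,i}(f,\Om)=\int_{\Om\times\Om}g^p\cdot 1\,d\nu_i\le I_{p_i,i}(f,\Om)^{p/p_i}\,\nu_i(\Om\times\Om)^{\eps_i/p_i}=\Psi_{p,i}(f,\Om)\,\nu_i(\Om\times\Om)^{\eps_i/p_i}$. As $\nu_i(\Om\times\Om)\le\max(A,1)$ and $\eps_i/p_i\to0$, the correction factor has $\limsup\le1$, so $\liminf_i\Psi_{p,i}(f,\Om)\ge\liminf_i I_{p,i}(f,\Om)\ge C_1 E_p(f,\Om)$ by the first half of Theorem~\ref{thm:main previous} (which requires only $\mu$ doubling and \eqref{eq:rho hat minorize}); when $E_p(f,\Om)=\infty$ the bound is automatic since then $\liminf_i I_{p,i}(f,\Om)=\infty$. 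Thus one may take $C_1'=C_1$.

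For the upper bound, fix $q\in(p,\infty)$ as in the hypothesis. Since $\Om$ is bounded, $f\in\widehat{N}^{1,q}(\Om)$ gives $f\in L^q(\Om)\subset L^p(\Om)$ as well as $f\in\widehat{N}^{1,p}(\Om)$, so $E_q(f,\Om)=\int_\Om g_f^q\,d\mu<\infty$ and hence also $E_p(f,\Om)<\infty$; moreover the $(1,p)$-Poincar\'e inequality implies, via H\"older's inequality on its right-hand side followed by the standard self-improvement of Poincar\'e inequalities, both a $(p,p)$- and a $(q,q)$-Poincar\'e inequality. For $i$ large enough that $0<\eps_i\le q-p$, I would write $p_i=(1-\theta_i)p+\theta_i q$ with $\theta_i:=\eps_i/(q-p)\to0$ and apply H\"older's inequality with exponents $1/(1-\theta_i)$ and $1/\theta_i$ to get $I_{p_i,i}(f,\Om)\le I_{p,i}(f,\Om)^{1-\theta_i}I_{q,i}(f,\Om)^{\theta_i}$, whence $\Psi_{p,i}(f,\Om)\le I_{p,i}(f,\Om)^{\alpha_i}I_{q,i}(f,\Om)^{\beta_i}$ with $\alpha_i:=(1-\theta_i)p/p_i\le1$, $\alpha_i\to1$, and $\beta_i:=\theta_i p/p_i\to0$. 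Applying the second half of Theorem~\ref{thm:main previous} at the exponents $p$ and $q$ (legitimate since $\Om$ is a strong $p$- and a strong $q$-extension domain and the required Poincar\'e inequalities hold) yields $\limsup_i I_{p,i}(f,\Om)\le C_2 E_p(f,\Om)$ and $\limsup_i I_{q,i}(f,\Om)\le C_2 E_q(f,\Om)<\infty$. Because $\alpha_i\le1$ tends to $1$ one gets $\limsup_i I_{p,i}(f,\Om)^{\alpha_i}\le C_2 E_p(f,\Om)$, and because $\beta_i\to0$ while $I_{q,i}(f,\Om)$ stays bounded one gets $\limsup_i I_{q,i}(f,\Om)^{\beta_i}\le1$; multiplying the two $\limsup$'s gives $\limsup_i\Psi_{p,i}(f,\Om)\le C_2 E_p(f,\Om)$. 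So $C_2'=C_2$ works (and, as only finiteness of $E_q(f,\Om)$ rather than a bound in terms of it was used, $C_2'$ does not depend on $q$), while $C_1'\le C_2'$ follows from $C_1\le C_2$.

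The step I expect to be the main obstacle is removing the floating exponent sharply. The naive pointwise estimate $g^{p_i}\le g^p+g^q$, valid for $p\le p_i\le q$, only produces $\limsup_i\Psi_{p,i}(f,\Om)\le C_2\big(E_p(f,\Om)+E_q(f,\Om)\big)$, which is strictly weaker than the assertion; the interpolation inequality $I_{p_i,i}\le I_{p,i}^{1-\theta_i}I_{q,i}^{\theta_i}$ is essential precisely because it makes the unwanted $E_q$-contribution appear with the vanishing power $\theta_i$. The remaining technical point, that all H\"older correction factors converge to $1$, rests on the uniform bound $\sup_i\nu_i(\Om\times\Om)<\infty$, which is where the mollifier normalization and the boundedness of $\Om$ are used.
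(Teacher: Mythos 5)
Your proof is correct and follows essentially the same approach as the paper's: for the lower bound, H\"older's inequality with conjugate exponents $(p+\eps_i)/p$ and $(p+\eps_i)/\eps_i$ together with the bound $\iint_{\Om\times\Om}\rho_i\,d\mu\,d\mu\le C_\rho\mu(\Om)$ from \eqref{eq:rho hat majorize}, then Theorem \ref{thm:main previous}; for the upper bound, interpolating $\Psi_{p,i}$ between $I_{p,i}$ and $I_{q,i}$ via H\"older so that the $I_{q,i}$-factor carries a vanishing exponent, then again invoking Theorem \ref{thm:main previous} at exponents $p$ and $q$. The only cosmetic difference is that you parametrize the interpolation by $\theta_i=\eps_i/(q-p)$ while the paper writes out the exponents $\tfrac{q-p}{q-p-\eps_i}$ and $\tfrac{q-p}{\eps_i}$ explicitly, yielding the identical inequality.
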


\begin{theorem}\label{thm:main p}
	Let $1\le p<\infty$, $1<q<\infty$, suppose $\mu$ is doubling,
	$\{\rho_i\}_{i=1}^{\infty}$ is a sequence of mollifiers satisfying condition
	\eqref{eq:rho hat minorize} or the same condition with $p$ replaced by $q$,
	 as well as \eqref{eq:rho hat majorize},
	and let $\Om\subset X$ be open and bounded.
	
	If $f\in L^p(\Om)$, then
	\[
	C_1'' E_{p}(f,\Om)
	\le \liminf_{i\to\infty}\Phi_{p,q,i}(f,\Om).
	\]
	
	If  $X$ supports a $(1,p)$--Poincar\'e inequality, $\Om\subset X$ is a bounded $pq$-extension domain, and $f\in \widehat{N}^{1,pq}(\Om)$, then
	\[
			\limsup_{i\to\infty}\Phi_{p,q,i}(f,\Om)
			\le C_2'' E_{p}(f,\Om).
	\]
	Here $C_1''\le C_2''$ are constants that depend only on $p$, $q$,
	the doubling constant of the measure, the constants in the
	Poincar\'e inequality, and the constant $C_{\rho}$ associated with the mollifiers.
\end{theorem}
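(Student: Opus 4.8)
The plan is to reduce both inequalities to the already-established Theorem \ref{thm:main previous} by controlling $\Phi_{p,q,i}$ from above and below by a functional of the type $I_{r,j}$ for a suitable exponent $r$ and a suitably modified sequence of mollifiers. For the lower bound, I would first fix $f\in L^p(\Om)$. Write the inner integral as an $L^q(\mu)$-norm against the probability-type measure $\rho_i(x,\cdot)\,d\mu$ (after renormalizing), and apply Jensen's inequality in the form $\|h\|_{L^q(\nu)}\ge \|h\|_{L^1(\nu)}$ when $\nu$ is a probability measure, which is legitimate because $q>1$. This turns the inner $L^q$-expression into an $L^1$-expression, and after integrating in $y$ and using Tonelli, one obtains $\Phi_{p,q,i}(f,\Om)\ge c\, I_{p,i}(f,\Om)$ for a constant $c$ depending on the normalization of $\rho_i$ (here the hypothesis \eqref{eq:rho hat minorize}, or its $q$-analogue, ensures the relevant mass of $\rho_i$ is bounded below). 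Taking $\liminf_{i\to\infty}$ and invoking the first part of Theorem \ref{thm:main previous} gives $\liminf_i \Phi_{p,q,i}(f,\Om)\ge C_1'' E_p(f,\Om)$. A mild subtlety: if $f\notin\BV(\Om)$ (resp.\ $\notin\widehat N^{1,p}(\Om)$) then the right side is $+\infty$ and one still needs $\liminf_i I_{p,i}(f,\Om)=+\infty$, which is exactly what the cited theorem provides.

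For the upper bound, fix $f\in\widehat{N}^{1,pq}(\Om)$ with $\Om$ a bounded $pq$-extension domain. The natural move is to push the $y$-integral inside using a reverse estimate: since $t\mapsto t^{1/q}$ is concave, one should instead bound $\Phi_{p,q,i}$ by recognizing that $\left[\int_\Om(\cdots)^q\rho_i(x,y)\,d\mu(x)\right]^{1/q}$ is an $\ell^q$-type quantity and relating it, via Hölder with exponents $q$ and $q'$ against the measure $\rho_i(x,\cdot)\,d\mu(x)$, to
\[
\left(\int_\Om \frac{|f(x)-f(y)|^{pq}}{d(x,y)^{pq}}\rho_i(x,y)\,d\mu(x)\right)^{1/q}\Big(\int_\Om\rho_i(x,y)\,d\mu(x)\Big)^{1/q'}.
\]
Here I use the doubling hypothesis and \eqref{eq:rho hat majorize} to bound the second factor uniformly in $y$ and $i$. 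Then, raising to appropriate powers and integrating in $y$, Tonelli gives $\Phi_{p,q,i}(f,\Om)\le C\, I_{pq,i}(f,\Om)^{1/q}\cdot(\text{bounded})$; more precisely one arranges the bookkeeping so that $\Phi_{p,q,i}(f,\Om)\le C\, \big(I_{pq,i}(f,\Om)\big)^{1/q}$ up to the uniform constant. Applying the second part of Theorem \ref{thm:main previous} with exponent $pq$ (this is where $f\in\widehat N^{1,pq}(\Om)$, the $pq$-extension property, and the $(1,p)$-Poincaré inequality — which self-improves to a $(1,pq)$- or $(pq,pq)$-Poincaré inequality on PI spaces via the Keith–Zhong theorem — are needed) yields $\limsup_i I_{pq,i}(f,\Om)\le C_2\int_\Om g_f^{pq}\,d\mu$. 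The final step is to convert $\big(\int_\Om g_f^{pq}\,d\mu\big)^{1/q}$ back into $\int_\Om g_f^{p}\,d\mu$; since $g_f\in L^{pq}(\Om)$ and $\Om$ is bounded with $\mu$ doubling (hence $\mu(\Om)<\infty$), Hölder's inequality gives $\int_\Om g_f^p\,d\mu\le \mu(\Om)^{1/q'}\big(\int_\Om g_f^{pq}\,d\mu\big)^{1/q}$, but one actually needs the reverse direction — so the correct approach is to instead interpolate the inner norm directly and keep the exponents matched, producing $I_{p,i}$-type quantities rather than $I_{pq,i}$, which requires a more careful splitting of $\rho_i$.

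The main obstacle, and the step I expect to demand genuine care, is precisely this matching of exponents in the upper bound: a naive Hölder application produces the energy $\int_\Om g_f^{pq}$ raised to power $1/q$, which is \emph{not} comparable to $\int_\Om g_f^p$ in general (there is no reverse Hölder for an arbitrary $L^{pq}$ function on a finite measure space). The resolution is to not throw away the pointwise structure too early: one should estimate $|f(x)-f(y)|/d(x,y)$ using the Poincaré-inequality-based pointwise estimate in terms of a maximal function of $g_f$ (a standard tool in PI spaces, cf.\ Hajłasz–Koskela), so that the $L^{pq}$ integrability of $g_f$ is used to control the maximal function in $L^{pq}$, while the $q$-th root is absorbed against a factor $d(x,y)^{p(q-1)}$ times $\rho_i$ whose integral in $x$ is uniformly bounded by the majorization hypothesis. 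This way the surviving energy term is genuinely $\int_\Om g_f^p\,d\mu$ (or the maximal-function analogue, which is controlled by it in $L^{p}$ via the boundedness of the maximal operator on $L^{p/\text{something}}$ — one must check the exponents line up, and this is where the assumption $q>1$ and the choice $pq$ rather than $p$ for the extension domain enter). The rest — verifying measurability, justifying Tonelli, and handling the degenerate case $f\notin\widehat N^{1,p}(\Om)$ — is routine given the machinery already set up in \cite{LPZ}.
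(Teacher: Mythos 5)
Your lower bound argument (Jensen to reduce $\Phi_{p,q,i}$ to $c\, I_{p,i}$, then invoke Theorem~\ref{thm:main previous}) has a genuine gap. The Jensen step is fine: with $M_i(y):=\int_\Om\rho_i(x,y)\,d\mu(x)$, one gets $\Phi_{p,q,i}(f,\Om)\ge \int_\Om M_i(y)^{1/q-1}\int_\Om\frac{|f(x)-f(y)|^p}{d(x,y)^p}\rho_i\,d\mu\,d\mu$, and since $1/q-1<0$, what you need is the \emph{upper} bound $M_i(y)\le C_\rho$ from~\eqref{eq:rho hat majorize} (not a lower bound from~\eqref{eq:rho hat minorize} as you wrote). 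The real problem comes next: to apply the lower half of Theorem~\ref{thm:main previous} to the resulting $I_{p,i}$ one needs $\rho_i$ to satisfy~\eqref{eq:rho hat minorize} \emph{with exponent $p$}. But Theorem~\ref{thm:main p} allows the hypothesis with $p$ replaced by $q$, and when $q>p$ the $q$-version does not imply the $p$-version (the factor $(d(x,y)/r_i)^q$ is \emph{smaller} than $(d(x,y)/r_i)^p$ on $B(y,r_i)$). This is not an edge case: the mollifier $\rho_i(x,y)=\frac{d(x,y)^q}{r_i^q}\frac{\ch_{B(y,r_i)}(x)}{\mu(B(y,r_i))}$ of Corollary~\ref{cor:Gorny}, applied with $p=1<q$, satisfies only the $q$-version. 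The paper avoids this by not passing through $I_{p,i}$ at all: in Theorem~\ref{thm:lower bound} one uses either version of the minorization to extract the single quantity~\eqref{eq:t arbitrarily small} involving the $pq$-th power and a uniform-ball kernel, and then runs a discrete-convolution argument directly on that quantity. Your approach can be repaired: first use the minorization (either version) together with $d(x,y)<r_i$ to drop to the functional with kernel $\frac{1}{r_i^{pq}}\frac{\ch_{B(y,r_i)\cap\Om}(x)}{\mu(B(y,r_i))}$ (this is exactly~\eqref{eq:t arbitrarily small}), \emph{then} apply Jensen on the normalized ball measure, which lands you on $I_{p,i}$ with the mollifier $\frac{d(x,y)^p}{r_i^p}\frac{\ch_{B(y,r_i)}(x)}{\mu(B(y,r_i))}$, for which~\eqref{eq:rho hat minorize} holds with $C_\rho=1$ and Theorem~\ref{thm:main previous} is applicable. (One small point to check: after normalizing on $B(y,r_i)\cap\Om$, the factor $\bigl(\mu(B(y,r_i)\cap\Om)/\mu(B(y,r_i))\bigr)^{1/q-1}\ge 1$ works in your favour.)

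Your upper bound discussion correctly diagnoses the obstacle — a naive H\"older application produces $\bigl(\int_\Om g_f^{pq}\,d\mu\bigr)^{1/q}$, which is not controlled by $\int_\Om g_f^p\,d\mu$ — and correctly identifies the fix: use the Poincar\'e-based pointwise estimate $|f(x)-f(y)|\lesssim d(x,y)\bigl(\mathcal M_{\lambda d(x,y)}g^{s}(y)\bigr)^{1/s}$ for a slightly self-improved exponent $s$ (Keith--Zhong), so that the $q$-th root acts on $(\mathcal M g^{pq'})^{q/q'}$ and returns $(\mathcal M g^{pq'})^{1/q'}$, whose $L^1(\Om)$ norm converges to $\int_\Om g^p\,d\mu$ by dominated convergence; a final pass through Lemma~\ref{lem:weak ug coincide} and~\eqref{eq:from BV to Sobolev} handles the case $p=1$. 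This is essentially the paper's route (Theorem~\ref{thm:one direction}), but your write-up leaves the exponent bookkeeping unexecuted, which is precisely the part that carries the content.
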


\begin{theorem}\label{thm:main phi}
	Let $1\le p<\infty$ and suppose $\varphi\colon [0,\infty)\to [0,\infty)$ satisfies assumptions
	\eqref{eq:varphi assumption increasing}--\eqref{eq:varphi integral assumption}, and
	suppose $\Om\subset X$ is open and bounded.
	
	If $\mu$ is Ahlfors $Q$-regular with $1<Q<\infty$
	and $f\in L^p(\Om)$, then
	\[
	C_1''' E_{p}(f,\Om)
	\le \limsup_{\delta\to 0}\Lambda_{p,\delta}(f,\Om)
	\]
	for some constant $C_1'''$ depending only on $p$, the Ahlfors regularity constants,
	and the constant $C_{\varphi}$ associated with the functional $\Lambda_{p,\delta}$.
	
	If, on the other hand, $\mu$ is doubling, $X$ supports a $(1,p)$--Poincar\'e inequality,
	and $f\in \widehat{N}^{1,1}(X)\cap \widehat{N}^{1,q}(X)$ for some $1<q<\infty$ in the case $p=1$,
	and  
	and $f\in \widehat{N}^{1,p}(X)$ in the case $1<p<\infty$, then
	\[
	\limsup_{\delta\to 0}\Lambda_{p,\delta}(f,\Om)\le C_2''' E_{f,p}(\Om)
	\]
	for some constant $C_2'''$ depending only on $p$, the doubling constant,
	the constants in the Poincar\'e inequality, and the constant $C_{\varphi}$.
\end{theorem}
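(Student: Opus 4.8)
The plan is to build everything on two elementary rewritings of $\Lambda_{p,\delta}$. First, setting
\[
h_\delta(x):=\left(\int_\Om\frac{\delta^p\varphi(|f(x)-f(y)|/\delta)}{\mu(B(x,d(x,y)))\,d(x,y)^p}\,d\mu(y)\right)^{1/p},\qquad \Lambda_{p,\delta}(f,\Om)=\int_\Om h_\delta^p\,d\mu,
\]
so that $h_\delta$ plays the role of a ``discrete gradient'' of $f$ at scale $\delta$. Second, using (the left-continuous representative of) $\varphi$ with Lebesgue--Stieltjes measure $\mu_\varphi$, the layer-cake formula $\varphi(t)=\int_{(0,\infty)}\mathbf 1_{\{t>\lambda\}}\,d\mu_\varphi(\lambda)$ together with Tonelli's theorem gives
\[
\Lambda_{p,\delta}(f,\Om)=\int_{(0,\infty)}\lambda^{-p}\,\Theta_{\lambda\delta}(f,\Om)\,d\mu_\varphi(\lambda),\qquad \Theta_s(f,\Om):=\int_\Om\int_\Om\frac{s^p\,\mathbf 1_{\{|f(x)-f(y)|>s\}}}{\mu(B(x,d(x,y)))\,d(x,y)^p}\,d\mu(x)\,d\mu(y),
\]
where $\int_{(0,\infty)}\lambda^{-p}\,d\mu_\varphi(\lambda)=C_\varphi<\infty$; the finiteness of this integral (equivalently of $\int_0^\infty\varphi(t)\,t^{-p-1}\,dt$) and the vanishing of the associated boundary terms are precisely assumptions \eqref{eq:varphi assumption increasing}--\eqref{eq:varphi integral assumption}. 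Thus everything reduces to estimating the single-threshold functional $\Theta_s$ (for the upper bound) or the discrete gradient $h_\delta$ (for the lower bound), after which one integrates in $\lambda$; the high-threshold part contributes $o(1)$ as $\delta\to0$ since $\int_{\{\lambda>R\}}\lambda^{-p}\,d\mu_\varphi\to0$.

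\textbf{Upper bound.} Decomposing the layer-cake formula into the scales $|f(x)-f(y)|\in[2^{-j}\delta,2^{-j+1}\delta)$, $j\in\Z$, reduces the estimate to summing the quantities $\varphi(2^{-j+1})\,\delta^p\iint_{\{|f(x)-f(y)|\ge 2^{-j}\delta\}}\frac{d\mu(x)\,d\mu(y)}{\mu(B(x,d(x,y)))\,d(x,y)^p}$. On the set $\{|f(x)-f(y)|\ge s\}$ the standard pointwise estimate in PI spaces, $|f(x)-f(y)|\le C\,d(x,y)\,(\M g_f(x)+\M g_f(y))$, forces $d(x,y)\gtrsim s/(\M g_f(x)+\M g_f(y))$; splitting according to which maximal function is larger and summing a geometric series over dyadic annuli (doubling only) gives, for every $s>0$, $\iint_{\{|f(x)-f(y)|\ge s\}}\frac{d\mu\,d\mu}{\mu(B(x,d(x,y)))\,d(x,y)^p}\le C s^{-p}\int_\Om(\M g_f)^p\,d\mu$. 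For $1<p<\infty$ the maximal theorem turns this into $\le C s^{-p}E_p(f,\Om)$ uniformly in $s$, and since $\sum_j\varphi(2^{-j+1})2^{jp}\simeq C_\varphi$ this yields $\limsup_{\delta\to0}\Lambda_{p,\delta}(f,\Om)\le C_2'''E_p(f,\Om)$. For $p=1$ the maximal operator is unbounded on $L^1$; this is exactly why the hypothesis carries the extra $f\in\widehat N^{1,q}(X)$, which I would exploit by running the same dyadic-annulus estimate with the maximal function truncated at a level tied to the annulus scale, interpolating between the $L^1$ and $L^q$ bounds for $g_f$ so that the series still converges against $C_\varphi$. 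This bookkeeping for $p=1$ is the main technical point of the upper bound, but it is not conceptually difficult.

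\textbf{Lower bound.} I would first reduce to bounded $f$: replacing $f$ by $f_T:=\max(-T,\min(T,f))$ does not increase $\Lambda_{p,\delta}$ (truncation is $1$-Lipschitz and $\varphi$ is nondecreasing) while $E_p(f_T,\Om)\uparrow E_p(f,\Om)$, so the general case — in particular $E_p(f,\Om)=\infty$ — follows by letting $T\to\infty$ once the bounded case is settled. For bounded $f$ I would work with $h_\delta$ and invoke Ahlfors $Q$-regularity, which makes the kernel comparable to $d(x,y)^{-(Q+p)}$ and hence scale-invariant: the heuristic is that for a.e.\ $x$ the inner integral defining $h_\delta(x)^p$ concentrates on the annulus where $d(x,y)$ has the size making $|f(x)-f(y)|$ comparable to $\delta$ — smaller annuli contribute little because $\varphi$ is small there, larger ones because of the factor $\delta^p/d(x,y)^p$ and the boundedness of $\varphi$ — so $h_\delta$ behaves like a genuine discrete gradient of $f$ at that scale. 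To make this rigorous along the lines of the proof of Theorem~\ref{thm:main previous}, one assumes $\liminf_{\delta\to0}\Lambda_{p,\delta}(f,\Om)<\infty$ (otherwise nothing is to prove), extracts from $\{h_{\delta_j}\}$ a subsequence converging weakly in $L^p(\Om)$ to some $h$ with $\int_\Om h^p\,d\mu\le\liminf_j\Lambda_{p,\delta_j}(f,\Om)$, and shows that a fixed constant multiple of $h$ is a $p$-weak upper gradient of $f$ by verifying the defining curve inequality for $p$-a.e.\ curve (using the scale-invariance and the concentration property) and passing to the limit; hence $E_p(f,\Om)\le C\int_\Om h^p\,d\mu\le C\limsup_{\delta\to0}\Lambda_{p,\delta}(f,\Om)$. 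In the case $p=1$ the extra integrability $f\in\widehat N^{1,q}(X)$ enters precisely to make that curve inequality quantitative. I expect the genuine obstacle to be the \emph{saturation} of $\varphi$: because $\varphi$ is bounded, $h_\delta$ cannot by itself detect large oscillations of $f$, so the concentration-of-scales argument — and with it the passage to a weak upper gradient — must be set up with care, and it is exactly this point that forces the Ahlfors-regularity hypothesis rather than mere doubling.
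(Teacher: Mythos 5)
Your upper bound is essentially the paper's argument: the pointwise PI estimate $|f(x)-f(y)|\le C\,d(x,y)\bigl(\mathcal M g_f(x)+\mathcal M g_f(y)\bigr)$, dyadic summation in the annuli, and the Riemann-sum control \eqref{eq:integral sum estimate} coming from \eqref{eq:varphi integral assumption} are exactly what the paper uses; the one nontrivial step you defer as ``bookkeeping'' for $p=1$ is handled in the paper not by interpolating the maximal operator but by splitting the double integral at scale $d(x,y)\lessgtr r$, working with the \emph{restricted} maximal function $\mathcal M_{2\lambda r}$ (so that the majorant lies in $L^1$ once one passes from $g_{f,1}$ to $g_{f,q}$ via Lemma \ref{lem:weak ug coincide}), and letting $r\to 0$ after $\delta\to 0$. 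That is a real and necessary maneuver, not mere bookkeeping, but your approach is morally the same.

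Your lower bound, however, has a genuine gap, and it is precisely at the point you flagged. Extracting a weak $L^p$ limit $h$ of the ``discrete gradients'' $h_\delta$ and trying to show $Ch$ is a $p$-weak upper gradient of $f$ runs into the saturation of $\varphi$: at a fixed scale $\delta$, large oscillations $|f(x)-f(y)|\gg\delta$ contribute at most the bounded quantity $b$ to the kernel, so $h_\delta$ cannot see them, and I do not see how to close the curve inequality for $p$-a.e.\ curve from a single-scale limit. (This is also why $\Lambda_{p,\delta}$ is nonconvex, so the lower semicontinuity machinery that works for $\Phi_{p,q,i}$ — where one really does build discrete convolutions with controlled Lip-numbers — is not available.) The paper's resolution is a different and essential idea: multiply $\Lambda_{p,\delta}(f,\Om)$ by $\eps\delta^{\eps-1}$, integrate over $\delta\in(0,\delta_0)$, and apply Fubini; the change of variables $t=|f(x)-f(y)|/\delta$ turns the weighted $\delta$-average of $\Lambda_{p,\delta}$ into (up to the factor $\int_{\delta_0}^\infty\varphi(t)\,t^{-1-p-\eps}\,dt$, which tends to $C_\varphi^{-1}$ by \eqref{eq:varphi integral assumption}) the fractional-type functional $\int_\Om\int_\Om \eps\,|f(x)-f(y)|^{p+\eps}/d(x,y)^{Q+p}\,d\mu\,d\mu$, where Ahlfors $Q$-regularity is used to replace $\mu(B(y,d(x,y)))$ by $d(x,y)^Q$. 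That functional is bounded below by $C^{-1}E_p(f,\Om)$ as $\eps\searrow 0$ by Corollary \ref{cor:epsilon consequence} (which rests on Corollary \ref{cor:DiMarino}, i.e.\ the BBM-type lower bound from Theorem \ref{thm:main previous}). Averaging over $\delta$ is what lets every oscillation of $f$ be ``seen'' at the $\delta$ where $\varphi$ has not yet saturated, and this is the piece your proposal is missing; the truncation $f\mapsto f_T$ to reduce to bounded $f$ is exactly as in the paper.
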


\begin{remark}
    In general, one cannot have equality of the constants.
    In particular, in Theorem \ref{thm:main p} one might hope for the equality $C_1''=C_2''$.
    Brezis-Nguyen \cite[Proposition 12]{BN1} give an example where
    $\Phi_{1,q,i}(f,\Om)$ defined with radial mollifiers satisfying \eqref{eq:intro rhoi conditions}
    fail to converge to $K_{1,1}E_1(f,\Om)$ for a Sobolev function $f\in W^{1,1}(\Om)$ on $\Om=(-1/2,1/2).$
    If $f$ is further assumed to be in $W^{1,q}(\Om)$ for some $q>1$ on an open, bounded and smooth
    domain in $\mathbb{R}^n$, such convergence holds \cite[Proposition 9]{BN1}. However,
    we will give an explicit example (Example \ref{ex:fat Cantor}) showing that convergence
    to a constant times $E_1(f,\Om)$ can fail
    even for a Lipschitz function on metric measure spaces. 
\end{remark}

\begin{remark}
    It is worth pointing out that in some cases the convergence of the above functionals in the
    Euclidean space is not yet fully understood, for example $\Phi_{p,q,i}(f,\Om)$ for a 
    general $1\le p<\infty$ \cite[Section 3]{BN1}.
    On the other hand, $\Gamma$-convergence of the functionals seems to be more robust and clear.
    In Euclidean spaces, the $\Gamma$-convergence of  the functionals
    $\Phi_{1,q,i}(\cdot,\Om)$, $\Psi_{1,i}(\cdot,\Om)$, $\Lambda_{1,\delta}(\cdot,\Om)$ is proved
    in \cite{BN1, BN18}. It would be interesting to investigate the $\Gamma$-convergence of
    such functionals in metric measure spaces as well. 
\end{remark}

The paper is organized as follows. We will review some definitions and introduce the conditions of the mollifiers $\rho_i$ and the function $\varphi$ in Section \ref{sec:definitions}. Then we give some examples of $\rho_i$, $\varphi$ and the related corollaries by applying such mollifiers in the main theorems in Section
\ref{sec:examples}. We prove the lower bounds of our three main theorems 
in Section \ref{sec:lower bounds},
and the upper bounds in Section \ref{sec:upper bounds}.
In Section \ref{sec:counterexample} we give an example demonstrating that
we do not generally have $C_1''=C_2''$.

\section{Notation and definitions}\label{sec:definitions}

Throughout this paper, we work in a complete and connected
metric measure space $(X,d,\mu)$ equipped with a metric $d$ and
a Borel regular outer measure $\mu$ satisfying
a doubling property, meaning that
there exists a constant $C_d\ge 1$ such that
\[
0<\mu(B(x,2r))\le C_d\mu(B(x,r))<\infty
\]
for every ball $B(x,r):=\{y\in X\colon\,d(y,x)<r\}$, with $x\in X$, $r>0$.
We assume that $X$ consists of at least two points, that is, $\diam X>0$.
As a complete metric space equipped with a doubling measure, $X$ is proper, meaning that
closed and bounded sets are compact.
By \cite[Corollary 3.8]{BB}, we know that for every $x\in X$ and every $0<r\le R<\tfrac 12 \diam X$, we have
\begin{equation}\label{eq:upper mass bound}
\frac{\mu(B(x,r))}{\mu(B(x,R))}\le C_0\left(\frac{r}{R}\right)^{\sigma}
\end{equation}
for constants $C_0>0$ and $0<\sigma<1$ depending only on $C_d$.
It follows that
$\mu(\{x\})=0$ for every $x\in X$. 

Sometimes we will make the stronger assumption that $\mu$ is Ahlfors regular.
We say that $\mu$ is Ahlfors $Q$-regular with $1<Q<\infty$ if for every $x\in X$ and
every $0<r<2\diam X$, we have
\[
C_A^{-1} r^Q\le \mu(B(x,r))\le C_A r^Q,
\]
for constant $C_A>0$ depending only on $Q$.
Obviously Ahlfors regularity implies doubling.
 
By a curve we mean a rectifiable continuous mapping from a compact interval of the real line into $X$.
The length of a curve $\gamma$
is denoted by $\ell_{\gamma}$. We will assume every curve to be parametrized
by arc-length, which can always be done (see e.g. \cite[Theorem~3.2]{Hj}).
A nonnegative Borel function $g$ on $X$ is an upper gradient 
of a function $f\colon X\to [-\infty,\infty]$
if for all nonconstant curves $\gamma\colon [0,\ell_{\gamma}]\to X$, we have
\begin{equation}\label{eq:definition of upper gradient}
	|f(x)-f(y)|\le \int_{\gamma} g\,ds:=\int_0^{\ell_{\gamma}} g(\gamma(s))\,ds,
\end{equation}
where $x$ and $y$ are the end points of $\gamma$.
We interpret $|f(x)-f(y)|=\infty$ whenever  
at least one of $|f(x)|$, $|f(y)|$ is infinite.
Upper gradients were originally introduced in \cite{HK}.

Let $1\le p<\infty$.
The $p$-modulus of a family of curves $\Gamma$ is defined by
\[
\Mod_{p}(\Gamma):=\inf\int_{X}\rho^p\, d\mu,
\]
where the infimum is taken over all nonnegative Borel functions $\rho$
such that $\int_{\gamma}\rho\,ds\ge 1$ for every curve $\gamma\in\Gamma$.
A property is said to hold for $p$-almost every curve
if it fails only for a curve family with zero $p$-modulus. 
If $g$ is a nonnegative $\mu$-measurable function on $X$
and (\ref{eq:definition of upper gradient}) holds for $p$-almost every curve,
we say that $g$ is a $p$-weak upper gradient of $f$. 
By only considering curves $\gamma$ in a set $A\subset X$,
we can talk about a function $g$ being a ($p$-weak) upper gradient of $u$ in $A$.\label{curve discussion}

We always let $\Om$ denote an open subset of $X$.
We define the Newton-Sobolev space $N^{1,p}(\Om)$ to consist of those functions $f\in L^p(\Om)$ for which there
exists  a $p$-weak upper gradient $g\in L^p(\Om)$ of $f$ in $\Om$.
This space was first introduced in \cite{S}.
We write $f\in N^{1,p}_{\loc}(\Om)$ if for every $x\in \Om$ there exists $r>0$ such that
$f\in N^{1,p}(B(x,r))$; other local function spaces are defined analogously.
For every $f\in N^{1,p}_{\loc}(\Om)$ there exists a minimal $p$-weak
upper gradient of $f$ in $\Om$, denoted by $g_f$, satisfying $g_f\le g$ 
$\mu$-almost everywhere (a.e.)
in $\Om$ for every $p$-weak upper gradient $g\in L_{\loc}^{p}(\Om)$
of $f$ in $\Om$, see \cite[Theorem 2.25]{BB}.

Note that Newton-Sobolev functions are understood to be defined at every $x\in \Om$, whereas the functionals
that we consider are not affected by perturbations of $f$ in a set of zero $\mu$-measure. For this reason, we also define
\[
\widehat{N}^{1,p}(\Om):=\{f\colon \Om\to [-\infty,\infty]\colon f=h \ \mu\textrm{-a.e. in }\Om\textrm{ for some }h\in N^{1,p}(\Om)\}.
\]
For every $f\in \widehat{N}^{1,p}(\Om)$, we can also define
$g_f:=g_h$, where $g_h$ is the minimal $p$-weak upper gradient of any $h$ as above in $\Om$;
this is well defined $\mu$-a.e. in $\Om$ by \cite[Corollary 1.49, Proposition 1.59]{BB}.

Next we give Mazur's lemma, see e.g. \cite[Theorem 3.12]{Rud}.

\begin{theorem}\label{thm:Mazur lemma}
	Let $\{g_i\}_{i=1}^{\infty}$ be a sequence
	with $g_i\to g$ weakly in $L^1(\Om)$.
	Then there exist convex combinations $\widehat{g}_i:=\sum_{j=1}^{N_i}a_{i,j}g_j$,
	for some $N_i\in\N$,
	such that $\widehat{g}_i\to g$ in $L^1(\Om)$.
\end{theorem}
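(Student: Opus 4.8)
The statement is the classical Mazur lemma, specialized to $L^1(\Om)$, and the plan is to deduce it from the Hahn--Banach separation theorem along the standard functional-analytic route, exhibiting $g$ as a norm-limit of elements of the convex hull of $\{g_i\}_{i=1}^\infty$. First I would introduce
\[
K:=\Big\{\textstyle\sum_{j=1}^{N}a_j g_j\ :\ N\in\N,\ a_j\ge 0,\ \textstyle\sum_{j=1}^{N}a_j=1\Big\}\subset L^1(\Om),
\]
the convex hull of the set $\{g_i:i\in\N\}$. By construction $K$ is convex, hence its closure $\overline{K}$ in the $L^1(\Om)$-norm is again convex and is, of course, norm-closed.

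The one genuinely nontrivial step is to show that $\overline{K}$ is closed in the weak topology of $L^1(\Om)$. To see this, take any $h\in L^1(\Om)\setminus\overline{K}$. Since $\{h\}$ is compact and convex, $\overline{K}$ is closed and convex, and the two sets are disjoint, the Hahn--Banach separation theorem (strict separation of a compact convex set from a disjoint closed convex set in a normed, hence locally convex, space) yields a bounded linear functional $\Lambda$ on $L^1(\Om)$ and a real number $\alpha$ with $\Lambda(h)<\alpha\le\Lambda(k)$ for every $k\in\overline{K}$. The set $U:=\{u\in L^1(\Om):\Lambda(u)<\alpha\}$ is then weakly open, contains $h$, and is disjoint from $\overline{K}$; since $h$ was arbitrary, $L^1(\Om)\setminus\overline{K}$ is weakly open, i.e. $\overline{K}$ is weakly closed. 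I expect this separation argument to be the only point requiring any care; the rest is bookkeeping.

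To finish: each $g_i$ lies in $K\subset\overline{K}$, and by hypothesis $g_i\to g$ weakly in $L^1(\Om)$, so the weak closedness of $\overline{K}$ forces $g\in\overline{K}$. By definition of the norm-closure there is a sequence $\widehat{g}_i\in K$ with $\Vert\widehat{g}_i-g\Vert_{L^1(\Om)}\to 0$, and each $\widehat{g}_i$, being a finite convex combination of the $g_j$, can be written as $\widehat{g}_i=\sum_{j=1}^{N_i}a_{i,j}g_j$ for a suitable $N_i\in\N$ and coefficients $a_{i,j}\ge 0$ with $\sum_{j=1}^{N_i}a_{i,j}=1$ (allowing some coefficients to vanish so as to pad out the index range), which is exactly the claimed conclusion. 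As the statement is standard, one may alternatively just cite \cite[Theorem 3.12]{Rud}; if a ``tail'' version were needed — convex combinations of $\{g_j\}_{j\ge i}$ only — it would follow by running the same argument with $K$ replaced by the convex hull of $\{g_j:j\ge i\}$ for each $i$ and diagonalizing.
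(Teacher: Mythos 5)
Your argument is correct, and it is essentially the same proof the paper relies on: the paper does not prove the statement itself but simply cites \cite[Theorem 3.12]{Rud}, whose content is precisely the Hahn--Banach separation fact you use (a norm-closed convex set is weakly closed), from which membership of $g$ in the norm-closure of the convex hull follows as you describe. Your handling of the bookkeeping (padding coefficients, the tail version) is also fine, so there is nothing to add.
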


By convex combinations we mean that the numbers $a_{i,j}$ are nonnegative
and that $\sum_{j=1}^{N_i}a_{i,j}=1$ for every $i\in\N$.

Let also $1\le q<\infty$.
We say that $X$ supports a $(q,p)$-Poincar\'e inequality
if there exist constants $C_P>0$ and $\lambda \ge 1$ such that for every
ball $B(x,r)$, every $f\in L^1(X)$,
and every upper gradient $g$ of $f$, we have
\begin{equation}\label{eq:pp Poincare}
	\left(\vint{B(x,r)}|f-f_{B(x,r)}|^q\, d\mu\right)^{1/q}
	\le C_P r \left(\vint{B(x,\lambda r)}g^p\,d\mu\right)^{1/p},
\end{equation}
where, as usual,
\[
f_{B(x,r)}:=\vint{B(x,r)}f\,d\mu :=\frac 1{\mu(B(x,r))}\int_{B(x,r)}f\,d\mu.
\]

Next we define functions of bounded variation.
Given an open set $\Om\subset X$ and a function $f\in L^1_{\loc}(\Om)$,
we define the total variation of $f$ in $\Om$ by
\[
\Vert Df \Vert(\Om):=\inf\left\{\liminf_{i\to\infty}\int_\Om g_{f_i}\,d\mu\colon\, f_i\in N^{1,1}_{\loc}(\Om),\, f_i\to f\textrm{ in } L^1_{\loc}(\Om)\right\},
\]
where each $g_{f_i}$ is the minimal $1$-weak upper gradient of $f_i$
in $\Om$.
We say that a function $f\in L^1(\Om)$ is of bounded variation, 
and denote $f\in\BV(\Om)$, if $\Vert Df\Vert (\Om)<\infty$.
For an arbitrary set $A\subset X$, we define
\[
\Vert Df \Vert (A):=\inf\{\Vert Df\Vert (W)\colon\, A\subset W,\,W\subset X
\text{ is open}\}.
\]
If $f\in L^1_{\loc}(\Om)$,
then $\Vert Df\Vert(\cdot)$ is
a Borel regular outer measure on $\Omega$ by \cite[Theorem 3.4]{Mir}.

For a function $u$ defined on an open set $\Om\subset X$,
we abbreviate super-level sets in the form
\[
\{u>t\}:=\{x\in \Om\colon u(x)>t\},\quad t\in\R.
\]
The following coarea formula is given in \cite[Proposition 4.2]{Mir}:
if $\Omega\subset X$ is open and $u\in \BV(\Om)$, then
\begin{equation}\label{eq:coarea}
	\|Du\|(\Omega)=\int_{-\infty}^{\infty}P(\{u>t\},\Omega)\,dt.
\end{equation}
If $f\in N^{1,1}(\Om)$, then
\begin{equation}\label{eq:from BV to Sobolev}
	\int_{\Om}g_{f}\,d\mu \le C_*\Vert Df\Vert(\Om),
\end{equation}
where $g_f$ is as usual the minimal $1$-weak upper gradient of $f$ in $\Om$, and
$C_*\ge 1$ is a constant depending only on the doubling constant $C_d$ and the constants in the Poincar\'e inequality $C_P, \lambda$; 
see \cite[Remark 4.7]{HKLL}.

\begin{definition}\label{def:strong BV extension}
	We say that an open set $\Om\subset X$ is a strong $p$-extension domain if
	\begin{itemize}
		\item in the case $p=1$, for every $f\in \BV(\Om)$
		there exists an extension $F\in \BV(X)$ and $\Vert DF\Vert(\partial\Om)=0$;
		\item in the case $1<p<\infty$, for every $f\in \widehat{N}^{1,p}(\Om)$ there exists
		an extension $F\in \widehat{N}^{1,p}(X)$
		and $\int_{\partial \Om}g_F^p\,d\mu=0$.
	\end{itemize}
A $p$-extension domain is defined similarly, but we omit the conditions
$\Vert DF\Vert(\partial\Om)=0$ resp. $\int_{\partial \Om}g_F^p\,d\mu=0$.
\end{definition}

For example, in Euclidean spaces, a bounded domain with a Lipschitz boundary is a strong $p$-extension domain for all $1\le p<\infty$,
see e.g. \cite[Proposition 3.21]{AFP}.

The Hardy--Littlewood maximal function of a nonnnegative function $g\in L_{\loc}^1(X)$ is defined by
\begin{equation}\label{eq:maximal function}
\mathcal M g(x):=\sup_{r>0}\,\vint{B(x,r)}g\,d\mu.
\end{equation}
Given $R>0$, we define the restricted maximal function $\mathcal M_Rg(x)$ in a similar way, but
we take the supremum over radii $0<r\le R$.

Given $U\subset X$ and $\delta>0$, we denote
\begin{equation}\label{eq:neighborhood notation}
	U(\delta):=\{x\in X\colon d(x,U)<\delta\}.
\end{equation}

Let $f$ be a function defined on $\Om$. We define
\begin{equation}\label{eq:Lip r}
\Lip_r f(x):=\sup_{y\in \Om\cap B(x,r)}\frac{|f(y)-f(x)|}{r},
\quad x\in \Om,\quad r>0,
\end{equation}
and
\begin{equation}\label{eq:Lip}
\Lip f(x):=\limsup_{r\to 0}\Lip_r(x).
\end{equation}

Now we describe the mollifiers that we will use in much of the paper.
Let $1\le p<\infty$.
We will consider a sequence of nonnegative $X\times X$-measurable functions
$\{\rho_i\}_{i=1}^{\infty}$ and a fixed constant
$1\le C_{\rho}<\infty$ satisfying the following conditions:
\begin{enumerate}[(1)]\label{rho conditions}
	\item For every $i\in\N$ and for every $x,y\in X$, we have 
	\begin{equation}\label{eq:rho hat minorize}
		\rho_i(x,y)
		\ge
		C_\rho^{-1} \frac{d(x,y)^p}{r_i^{p}}\frac{\ch_{B(y,r_i)}(x)}{\mu(B(y,r_i))},
	\end{equation}
	where $r_i\searrow 0$.

	\item For every $i\in\N$ and every $x,y\in X$, we have
	\begin{equation}\label{eq:rho hat majorize}
		\rho_i(x,y)
		\le \sum_{j\in\Z} d_{i,j}\frac{\ch_{B(y,2^{j+1})
				\setminus B(y,2^{j})}(x)}{\mu(B(y,2^{j+1}))}
	\end{equation}
	for numbers $d_{i,j}\ge 0$ for which $\sum_{j\in\Z} d_{i,j}\le C_\rho$
	and $\lim_{i\to\infty}\sum_{j\ge M} d_{i,j}=0$ for all $M\in\Z$.
\end{enumerate}

\begin{remark}
The assumptions \eqref{eq:intro rhoi conditions} are ubiquitous in the literature,
but depending on the functional under consideration, some additional conditions are needed. For example,  the convergence of
$\Psi_{1,i}(f,\Om)$ to $E_1(f,\Om)$ in a smooth bounded domain in the Euclidean space 
has been verified only for a special choice of mollifiers \cite[Proposition 2]{BN1}.
In our setting,
we do not have access to certain Euclidean techniques, especially the Taylor approximation.
Our formulation of the assumptions \eqref{eq:rho hat minorize} and \eqref{eq:rho hat majorize}
is informed by these facts.
These assumptions are in any case satisfied by certain typical and important choices of the mollifiers
$\rho_i$, as we will see in Section  \ref{sec:examples}.
\end{remark}

The definition of the functional $\Lambda_{p,\delta}(f,\Om)$ involves a function
$\varphi\colon [0,\infty)\to [0,\infty)$. Let $1\le p<\infty$.
Given $b>0$, we consider the assumptions (we understand ``increasing'' in the non-strict sense)
\begin{equation}\label{eq:varphi assumption increasing}
	\varphi(t)\quad\textrm{is increasing},
\end{equation}
\begin{equation}\label{eq:varphi assumption bounded}
	\varphi(t)\le b\quad\textrm{for }0\le t<\infty;
\end{equation}
and
\begin{equation}\label{eq:varphi integral assumption}
C_\varphi^{-1}\le 	\int_0^{\infty}\varphi(t)t^{-1-p}\,dt\le C_\varphi
\quad\textrm{for some }1\le C_{\varphi}<\infty.
\end{equation}

Our standing assumptions are the following; note that we do \emph{not} always
assume that $\mu$ is Ahlfors regular or that $X$ satisfies a Poincar\'e inequality.\\

\emph{Throughout the paper, $(X,d,\mu)$ 
	is a complete and connected
	metric space 
	equipped with a doubling Borel regular outer measure $\mu$, with $\diam X>0$. We always assume $\Om\subset X$ to be an open set.}

\section{Mollifiers and implications}\label{sec:examples}

In this section we consider three important examples
of mollifiers $\rho_i$ satisfying
conditions \eqref{eq:rho hat minorize} and \eqref{eq:rho hat majorize}. The first mollifier is
investigated carefully in our previous work \cite{LPZ} but we add it here to obtain some results
needed in our later proofs. We apply the second and third mollifiers in Theorem \ref{thm:main p}
with the choice $p=1$ to give some interesting corollaries.  We also give a simple choice of
nonconvex function $\varphi$ in Theorem \ref{thm:main phi} at the end of this section and compare
the result with that in \cite{BP}.

First we recall one mollifier from \cite{LPZ} which gives important corollaries needed later. 
Consider 
\[
\rho_i(x,y):=(1-s_i)\frac{1}{d(x,y)^{p(s_i-1)}\mu(B(y,d(x,y)))}, \quad x,y\in X,
\]
where $s_i\nearrow 1$ as $i\to \infty.$ One can verify that it satisfies \eqref{eq:rho hat minorize} and as a result we obtain the following corollary, see \cite[Corollary 6.1]{LPZ} for detailed proof. 

\begin{corollary}\label{cor:DiMarino}
	Let $1\le p<\infty$ and let $f\in L^p(\Om)$. Then
	\begin{equation}
		\liminf_{s\nearrow 1}(1-s)\int_{\Om}\int_{\Om}\frac{|f(x)-f(y)|^p}{d(x,y)^{ps}\mu(B(y,d(x,y)))}\,d\mu(x)\,d\mu(y)
			\ge C^{-1}E_{p}(f,\Om)
	\end{equation}
	for some constant $C\ge 1$ depending only on the doubling constant of the measure.
\end{corollary}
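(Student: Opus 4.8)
The plan is to obtain Corollary \ref{cor:DiMarino} as a direct specialization of the lower-bound half of Theorem \ref{thm:main previous}. Fix $f\in L^p(\Om)$ and an arbitrary sequence $s_i\nearrow 1$, and put
\[
\rho_i(x,y):=(1-s_i)\frac{1}{d(x,y)^{p(s_i-1)}\mu(B(y,d(x,y)))},\qquad x,y\in X.
\]
Since $d(x,y)^{-p(s_i-1)}=d(x,y)^{p}\,d(x,y)^{-ps_i}$, one has $\rho_i(x,y)/d(x,y)^p=(1-s_i)\,d(x,y)^{-ps_i}\mu(B(y,d(x,y)))^{-1}$, so the functional $I_{p,i}(f,\Om)$ of \eqref{eq:functional basic} coincides \emph{identically} with
\[
(1-s_i)\int_{\Om}\int_{\Om}\frac{|f(x)-f(y)|^p}{d(x,y)^{ps_i}\mu(B(y,d(x,y)))}\,d\mu(x)\,d\mu(y).
\]
Thus, since the sequence $s_i\nearrow 1$ is arbitrary, it suffices to prove $\liminf_{i\to\infty}I_{p,i}(f,\Om)\ge C^{-1}E_p(f,\Om)$ with $C$ depending only on $C_d$ (and $p$).

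The crux is to check that $\{\rho_i\}_i$ satisfies the minorization \eqref{eq:rho hat minorize} for a suitably chosen sequence $r_i\searrow 0$ (calibrated to $s_i$) and a constant $C_\rho$ depending only on $C_d$: for $x\in B(y,r_i)$ one has $d(x,y)<r_i$, hence $\mu(B(y,d(x,y)))\le\mu(B(y,r_i))$, and after comparing these two measures via the mass bound \eqref{eq:upper mass bound} and handling the factor $d(x,y)^{p(1-s_i)}$, one is left with a pointwise estimate of the form $\rho_i(x,y)\ge C_\rho^{-1}\,d(x,y)^p\,r_i^{-p}\,\mu(B(y,r_i))^{-1}$ on $B(y,r_i)$. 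This computation is carried out in detail in \cite[Corollary~6.1]{LPZ}, where $C_\rho$ is seen to depend only on the doubling constant; I expect the correct choice of $r_i$ relative to $s_i$ to be the only delicate point of the whole argument.

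Granting \eqref{eq:rho hat minorize}, the first assertion of Theorem \ref{thm:main previous} (whose only hypotheses are that $\mu$ be doubling, that $\{\rho_i\}_i$ satisfy \eqref{eq:rho hat minorize}, and that $f\in L^p(\Om)$; no Poincar\'e inequality enters the lower bound) gives $\liminf_{i\to\infty}I_{p,i}(f,\Om)\ge C_1 E_p(f,\Om)$ with $C_1$ depending only on $p$, $C_d$ and $C_\rho$, hence only on $p$ and $C_d$. Combining this with the identity from the first paragraph and the arbitrariness of the sequence $s_i\nearrow 1$ yields the corollary with $C=C_1^{-1}$.
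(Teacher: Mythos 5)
Your setup is correct: the algebraic identity between $I_{p,i}(f,\Om)$ with the given $\rho_i$ and the Gagliardo quantity is right, and you correctly observe that only the lower-bound half of Theorem~\ref{thm:main previous} (no Poincar\'e inequality) is needed. But the step you call ``the crux'' --- verifying the pointwise minorization \eqref{eq:rho hat minorize} --- cannot actually be carried out, and the remark that ``the correct choice of $r_i$ relative to $s_i$ is the only delicate point'' is misleading: there is \emph{no} such choice. For a pair $x,y$ with $r_i/2\le d(x,y)<r_i$ (such pairs exist for $i$ large since $X$ is connected with $\diam X>0$), dividing the two sides of \eqref{eq:rho hat minorize} gives
\[
\frac{\rho_i(x,y)}{C_\rho^{-1}\,d(x,y)^p\,r_i^{-p}\,\mu(B(y,r_i))^{-1}}
= C_\rho\,(1-s_i)\,d(x,y)^{-ps_i}\,r_i^p\,\frac{\mu(B(y,r_i))}{\mu(B(y,d(x,y)))}
\le C_\rho\,C_d\,2^{p}\,(1-s_i)\,r_i^{p(1-s_i)}.
\]
Since $r_i\searrow 0$ forces $r_i^{p(1-s_i)}\le 1$ eventually, the right side tends to $0$ for \emph{every} choice of $r_i\searrow 0$ and any fixed $C_\rho$. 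So the Di~Marino--Squassina kernel does not satisfy \eqref{eq:rho hat minorize}, and the reduction to Theorem~\ref{thm:main previous} does not go through as you (nor the paper, which defers this same verification to \cite[Corollary 6.1]{LPZ}) have set it up.

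The corollary is nonetheless true, but the proof has to average over scales rather than fix a single $r_i$. Writing $d(x,y)^{-ps}=ps\int_{d(x,y)}^\infty r^{-1-ps}\,dr$, applying Tonelli, and using $\mu(B(y,d(x,y)))\le\mu(B(y,r))$ for $d(x,y)<r$, one obtains
\[
(1-s)\int_\Om\int_\Om\frac{|f(x)-f(y)|^p}{d(x,y)^{ps}\mu(B(y,d(x,y)))}\,d\mu\,d\mu
\ge ps(1-s)\int_0^\infty r^{p(1-s)-1}H(r)\,dr,
\]
where $H(r):=\iint_{\{(x,y)\in\Om\times\Om:\,d(x,y)<r\}}\frac{|f(x)-f(y)|^p}{r^p\mu(B(y,r))}\,d\mu(x)\,d\mu(y)$. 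The kernel $\frac{d(x,y)^p}{r^p}\frac{\ch_{B(y,r)}(x)}{\mu(B(y,r))}$ \emph{does} satisfy \eqref{eq:rho hat minorize} (with $C_\rho=1$), so Theorem~\ref{thm:main previous} applied to this family yields $\liminf_{r\to 0}H(r)\ge C_1E_p(f,\Om)$; inserting this into the $r$-integral on an interval $(0,r_0)$ and letting $s\nearrow 1$ (using $r_0^{p(1-s)}\to 1$) gives the stated lower bound. This averaging-over-scales step is the missing ingredient in your proposal.
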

\begin{remark}
    Note that in \cite[Corollary 6.1]{LPZ} it is also assumed that $\Om$ is a strong $p$-extension domain
and that $X$ supports a $(p,p)$-Poincar\'e inequality,
but these are only needed for the upper bound given in that corollary.
\end{remark}
The following is now an easy consequence of Corollary \ref{cor:DiMarino}.
\begin{corollary}\label{cor:epsilon consequence}
	Let $1\le p<\infty$ and suppose that $\mu$ is Ahlfors $Q$-regular
	and that $\Om$ is bounded. Let $f\in L^p(\Om)$. Then
	\[
	\liminf_{\eps\searrow 0}\int_{\Om}\int_{\Om}\frac{\eps |f(x)-f(y)|^{p+\eps}}{d(x,y)^{Q+p}}
	\,d\mu(x)\,d\mu(y)\ge C^{-1} E_{p}(f,\Om)
	\]
	for some constant $C\ge 1$ depending only on $p$ and the Ahlfors regularity constants of the measure.
\end{corollary}
\begin{proof}
	First note that
	\begin{equation}\label{eq:epsilon limit}
	\lim_{\eps\searrow 0}\eps^{p/(p+\eps)}\frac{{p(p+\eps)}}{\eps(p+Q)}
	=\frac{p^2}{p+Q}\lim_{\eps\searrow 0}\eps^{-\eps/(p+\eps)}=\frac{p^2}{p+Q}.
	\end{equation}
	By H\"older's inequality, we have	
	\begin{align*}
		&\liminf_{\eps\searrow 0}\int_{\Om}\int_{\Om}\frac{\eps |f(x)-f(y)|^{p+\eps}}{d(x,y)^{Q+p}}
		\,d\mu(x)\,d\mu(y)\\
		& \ge \liminf_{\eps\searrow 0}\eps\mu(\Om)^{-2\eps/p}\left(\int_{\Om}\int_{\Om}
		\frac{ |f(x)-f(y)|^p}{d(x,y)^{(Q+p)p/(p+\eps)}}\,d\mu(x)\,d\mu(y)\right)^{(p+\eps)/p}\\
		& = \liminf_{\eps\searrow 0}\Bigg(\eps^{p/(p+\eps)}\frac{{p(p+\eps)}}{\eps(p+Q)}
		\left(1-\frac{p^2-Q\eps}{p(p+\eps)}\right)\\
		&\qquad \qquad \times \int_{\Om}\int_{\Om}
		\frac{ |f(x)-f(y)|^p}{d(x,y)^{Q}d(x,y)^{(p^2-Q\eps)/(p+\eps)}} \,d\mu(x)\,d\mu(y)\Bigg)^{(p+\eps)/p}\\
		& \ge\frac{p^2}{p+Q}C^{-1}E_{p}(f,\Om)
	\end{align*}
by \eqref{eq:epsilon limit} and Corollary \ref{cor:DiMarino}.
\end{proof}
The second mollifier we consider here is defined as

	\[
	\rho_{i}(x,y)
	=\frac{d(x,y)^q}{r_i^{q}}\frac{\ch_{B(y,r_i)}(x)}{\mu(B(y,r_i))},
	\]
	where $r_i\searrow 0$ as $i\to\infty$. 

It is not hard to check that
\eqref{eq:rho hat minorize} holds. We next verify that it satisfies \eqref{eq:rho hat majorize}.
	Let
	\[
	d_{i,j}:=\frac{2^{(j+1)q}}{r_i^{q}}C_d 
	\]
	for $j\le \log_2 r_i$, and $d_{i,j}=0$ otherwise. Now
	\[
	d_{i,j}\ge \sup_{y\in X} \frac{2^{(j+1)q}}{r_i^{q}}\frac{\mu(B(y,2^{j+1}))}{ \mu(B(y,r_i))}
	\]
	for $j\le \log_2 r_i$, and so
	for every $x,y\in X$, we have
	\[
	\rho_{i}(x,y)
	\le \sum_{j\in\Z}d_{i,j}\frac{\ch_{B(y,2^{j+1})
			\setminus B(y,2^{j})}(x)}{\mu(B(y,2^{j+1}))}.
	\]
	Moreover,
	\begin{align*}
		\sum_{j\in\Z}d_{i,j}
		= C_d r_i^{-q} \sum_{j\le \log_2 r_i}  2^{(j+1)q}
		\le  2^{q+1} C_d,
	\end{align*}
	and
	\begin{align*}
	\sum_{j\ge M}d_{i,j}=0
	\end{align*}
	when $\log_2 r_i<M$, so that $\lim_{i\to\infty}\sum_{j\ge M}d_{i,j}=0$ for every $M\in\Z$.
 
 The following corollary follows from applying Theorem \ref{thm:main p} with the choices $p=1$ and the above mollifier. 
\begin{corollary}\label{cor:Gorny}
	Let $1<q<\infty$ and suppose that
	$X$ supports a $(1,1)$--Poincar\'e inequality.
	If $f\in L^q(\Om)$,
	then
	\[
	C^{-1} \Vert Df\Vert(\Om)
	\le \liminf_{r\searrow 0}\int_\Om
	\left[\,\frac{1}{\mu(B(y,r))}\int_{B(y,r)\cap \Om}\frac{|f(x)-f(y)|^q}{r^q}\,d\mu(x)\right]^{1/q}\,d\mu(y).
	\]
	If $\Om$ is a bounded $q$-extension domain and $f\in \widehat{N}^{1,q}(\Om)$, then
	\[
	\limsup_{r\searrow 0}\int_\Om 
	\left[\,\frac{1}{\mu(B(y,r))}\int_{B(y,r)\cap\Om}\frac{|f(x)-f(y)|^q}{r^q}\,d\mu(x)
	\right]^{1/q}\,d\mu(y)
	\le C \Vert Df\Vert(\Om).
	\]
	Here $C\ge 1$ is a constant depending only on $q$, the doubling constant of the measure,
	and the constants in the Poincar\'e inequality.
\end{corollary}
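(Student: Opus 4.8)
The plan is to obtain both inequalities directly from Theorem~\ref{thm:main p} applied with $p=1$ and the mollifier $\rho_i(x,y)=d(x,y)^q r_i^{-q}\ch_{B(y,r_i)}(x)/\mu(B(y,r_i))$ discussed just above. The first step is to observe that with this $\rho_i$ the factor $d(x,y)^q$ cancels the factor $d(x,y)^{-q}$ arising from $(|f(x)-f(y)|/d(x,y))^q$, so that for every sequence $r_i\searrow 0$ one has
\[
\Phi_{1,q,i}(f,\Om)=\int_\Om\left[\frac{1}{\mu(B(y,r_i))}\int_{B(y,r_i)\cap\Om}\frac{|f(x)-f(y)|^q}{r_i^q}\,d\mu(x)\right]^{1/q}\,d\mu(y)=:G(r_i),
\]
where $G(r)$ is the quantity whose $\liminf$ and $\limsup$ as $r\searrow 0$ appear in the statement. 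It has already been verified above that this $\rho_i$ satisfies \eqref{eq:rho hat majorize}, and it clearly satisfies \eqref{eq:rho hat minorize} with $p$ replaced by $q$ (with $C_\rho=1$); this is one of the two alternatives allowed in the hypotheses of Theorem~\ref{thm:main p}, and I would emphasise that the $p=1$ form of \eqref{eq:rho hat minorize} does not hold for this $\rho_i$, so the $q$-version is essential.

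For the lower bound, since $\Om$ is bounded and $X$ is proper we have $\mu(\Om)<\infty$, hence $L^q(\Om)\subset L^1(\Om)$ by H\"older's inequality; thus the first estimate of Theorem~\ref{thm:main p}, which for $p=1$ requires only that $\mu$ be doubling together with the two mollifier conditions, applies and yields $C_1''\Vert Df\Vert(\Om)\le\liminf_{i\to\infty}G(r_i)$ for every sequence $r_i\searrow 0$. I would then pick a sequence along which $G(r_i)\to\liminf_{r\searrow 0}G(r)$ to conclude $C_1''\Vert Df\Vert(\Om)\le\liminf_{r\searrow 0}G(r)$. For the upper bound, the hypotheses in the statement --- $\Om$ a bounded $q$-extension domain, $f\in\widehat{N}^{1,q}(\Om)$, and $X$ supporting a $(1,1)$-Poincar\'e inequality --- are exactly those of the second estimate of Theorem~\ref{thm:main p} with $p=1$, so I would apply it along a sequence $r_i\searrow 0$ realizing $\limsup_{r\searrow 0}G(r)$, obtaining $\limsup_{r\searrow 0}G(r)\le C_2''\Vert Df\Vert(\Om)$. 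Since $E_1(f,\Om)=\Vert Df\Vert(\Om)$ and the constants $C_1''\le C_2''$ depend only on $q$, the doubling constant, and the Poincar\'e constants, one finishes by choosing any $C\ge 1$ with $C^{-1}\le C_1''$ and $C_2''\le C$.

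I do not anticipate a genuine obstacle, since the content of the corollary is entirely contained in Theorem~\ref{thm:main p}. The only points needing a little care are the cancellation of the $d(x,y)^q$ factor, the observation that one must invoke the ``$p$ replaced by $q$'' form of \eqref{eq:rho hat minorize} rather than its $p=1$ form, and the passage from limits along sequences $r_i\searrow 0$ to the continuous $\liminf_{r\searrow 0}$ and $\limsup_{r\searrow 0}$ --- the last being immediate because one can always select sequences that realise these quantities.
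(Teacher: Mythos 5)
Your proposal is correct and takes essentially the same route as the paper, which simply states that Corollary~\ref{cor:Gorny} follows from Theorem~\ref{thm:main p} with $p=1$ and the second mollifier of Section~\ref{sec:examples}. Your added observations---that the $d(x,y)^q$ factor cancels to give the stated integrand, that this $\rho_i$ satisfies \eqref{eq:rho hat minorize} only in the ``$p$ replaced by $q$'' form (the $p=1$ form indeed fails since $(d(x,y)/r_i)^{q-1}$ is not bounded below on $B(y,r_i)$), and that one passes from the sequential limits of Theorem~\ref{thm:main p} to the continuous $\liminf_{r\searrow 0}$ and $\limsup_{r\searrow 0}$ by extracting realizing sequences---are exactly the details the paper leaves implicit, and they are all correct.
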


The third mollifier, studied in the Euclidean setting e.g. by Brezis
\cite[Eq. (45)]{Bre}, is simple and natural. 
Consider
\[
	\rho_{i}(x,y)=\frac{\ch_{B(y,r_i)}(x)}{\mu(B(y,r_i))},
	\]
	where $r_i\searrow 0$ as $i\to\infty$.
	Again \eqref{eq:rho hat minorize} can be verified easily.
	We check \eqref{eq:rho hat majorize}. We can assume that $r_i<\min\{1,\diam X/4\}$ for all $i\in\N$.
	Letting
	\[
	d_{i,j}:=C_0 \left(\frac{2^{j+1}}{r_i}\right)^{\sigma},
	\]
	for $j\le \log_2 r_i$ and $d_{i,j}=0$ otherwise, by \eqref{eq:upper mass bound} we have
	for all $j\le \log_2 r_i$ that
	\[
	d_{i,j}\ge \frac{\mu(B(y,2^{j+1}))}{ \mu(B(y,r_i))}
	\quad\textrm{for all }y\in X.
	\]
	Then for every $x,y\in X$, we have
	\[
	\rho_{i}(x,y)
	\le \sum_{j\in\Z}d_{i,j}\frac{\ch_{B(y,2^{j+1})\setminus B(y,2^{j})}(x)}{\mu(B(y,2^{j+1}))}.
	\]
	Finally,
	\[
	\sum_{j\in\Z} d_{i,j}
	= C_0\sum_{j\le \log_2 r_i} \left(\frac{2^{j+1}}{r_i}\right)^{\sigma}
	\le C,
	\]
	where $C$ depends only on $C_0$ and $\sigma$, and thus in fact only on the doubling 
	constant $C_d$.

 Similar to the case of second mollifier, the following corollary follows from applying
 Theorem \ref{thm:main p} with the choice $p=1$.

\begin{corollary}\label{cor:Brezis}
	Let $1<q<\infty$ and suppose that
	$X$ supports a $(1,1)$--Poincar\'e inequality.
	If
	$f\in L^q(\Om)$, then
	\[
	C^{-1} \Vert Df\Vert(\Om)
	\le \liminf_{r\searrow 0}\int_\Om 
	\left[\,\frac{1}{\mu(B(y,r))}\int_{B(y,r)\cap\Om}\frac{|f(x)-f(y)|^q}{d(x,y)^q}\,d\mu(x)\right]^{1/q}\,d\mu(y).
	\]
	If $\Om\subset X$ is a bounded $q$-extension domain and $f\in \widehat{N}^{1,q}(\Om)$, then
	\[
	\limsup_{r\searrow 0}\int_\Om 
	\left[\,\frac{1}{\mu(B(y,r))}\int_{B(y,r)\cap\Om}\frac{|f(x)-f(y)|^q}{d(x,y)^q}\,d\mu(x)
	\right]^{1/q}\,d\mu(y)
	\le C \Vert Df\Vert(\Om).
	\]
	Here $C\ge 1$ is a constant depending only on $q$, the doubling constant of the measure,
	and the constants in the Poincar\'e inequality.
\end{corollary}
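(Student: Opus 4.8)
The plan is to deduce Corollary \ref{cor:Brezis} directly from Theorem \ref{thm:main p}, applied with $p=1$ and with the third mollifier $\rho_i(x,y)=\ch_{B(y,r_i)}(x)/\mu(B(y,r_i))$, $r_i\searrow 0$, exactly as Corollary \ref{cor:Gorny} was obtained from the second mollifier. First I would record that with this choice of $\rho_i$,
\[
\Phi_{1,q,i}(f,\Om)=\int_\Om\left[\frac{1}{\mu(B(y,r_i))}\int_{B(y,r_i)\cap\Om}\frac{|f(x)-f(y)|^q}{d(x,y)^q}\,d\mu(x)\right]^{1/q}\,d\mu(y),
\]
which is precisely the quantity appearing in the statement with $r=r_i$. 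Next I would check that the hypotheses of Theorem \ref{thm:main p} (with $p=1$) are met: condition \eqref{eq:rho hat majorize} for this $\rho_i$, with constant $C_\rho$ depending only on the doubling constant and independent of the sequence $(r_i)$, was verified above, while \eqref{eq:rho hat minorize} with $p=1$ holds with $C_\rho=1$ since $d(x,y)/r_i\le 1$ on $B(y,r_i)$; moreover $E_1(f,\Om)=\Vert Df\Vert(\Om)$, the $(1,1)$-Poincar\'e inequality assumed in the corollary is exactly the $(1,p)$-Poincar\'e inequality needed for $p=1$, and $pq=q$, so the extension-domain and integrability assumptions coincide. Since $\Om$ is bounded and $\mu$ doubling, $\mu(\Om)<\infty$, so $L^q(\Om)\subset L^1(\Om)$ and the lower-bound hypothesis $f\in L^1(\Om)$ is satisfied.

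Writing $G(r)$ for the integral in the corollary with parameter $r$, for the lower bound I would pick a sequence $r_i\searrow 0$ along which $G$ realizes its lower limit as $r\searrow 0$; applying Theorem \ref{thm:main p} to the mollifier built from this sequence then gives
\[
\liminf_{r\searrow 0}G(r)=\liminf_{i\to\infty}\Phi_{1,q,i}(f,\Om)\ge C_1''\,\Vert Df\Vert(\Om).
\]
The upper bound is symmetric: assuming in addition that $\Om$ is a bounded $q$-extension domain and $f\in\widehat{N}^{1,q}(\Om)$, I would choose $r_i\searrow 0$ realizing the upper limit of $G$ and invoke the upper-bound half of Theorem \ref{thm:main p} to get $\limsup_{r\searrow 0}G(r)=\limsup_{i\to\infty}\Phi_{1,q,i}(f,\Om)\le C_2''\,\Vert Df\Vert(\Om)$. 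Taking $C:=\max\{C_2'',1/C_1''\}$ yields the stated inequalities.

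There is no genuine obstacle here, since all the content lies in Theorem \ref{thm:main p}; the only points requiring attention are the (already completed) verification of \eqref{eq:rho hat minorize} and \eqref{eq:rho hat majorize} for $\rho_i=\ch_{B(y,r_i)}/\mu(B(y,r_i))$, and the passage from the discrete sequence $(r_i)$ to the continuous limit $r\searrow 0$ — which is valid precisely because the constants $C_1''\le C_2''$ furnished by Theorem \ref{thm:main p} depend only on $q$, the doubling constant, and the Poincar\'e constants, not on the particular choice of $(r_i)$.
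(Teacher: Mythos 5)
Your proof follows the paper's approach exactly: Corollary \ref{cor:Brezis} is an instance of Theorem \ref{thm:main p} with $p=1$ and the mollifier $\rho_i(x,y)=\ch_{B(y,r_i)}(x)/\mu(B(y,r_i))$, for which the paper verifies \eqref{eq:rho hat majorize} just before the corollary, your check of \eqref{eq:rho hat minorize} with $C_\rho=1$ is correct since $d(x,y)<r_i$ on $B(y,r_i)$, and the passage from the discrete sequence $(r_i)$ to the continuous $r\searrow 0$ limit via subsequences realizing the $\liminf$ (resp.\ $\limsup$) is exactly what is needed because the constants produced by Theorem \ref{thm:main p} do not depend on the choice of $(r_i)$. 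One small caveat: the lower-bound half of the corollary is stated without assuming $\Om$ bounded, yet you invoke boundedness to deduce $f\in L^1(\Om)$ from $f\in L^q(\Om)$. To cover unbounded $\Om$ one should instead appeal directly to Theorem \ref{thm:lower bound}, which imposes no conditions on $\Om$ and, as its proof shows, uses $f$ only through $L^1$-averages on bounded open sets $U\subset\Om$ with $\dist(U,X\setminus\Om)>0$, so $f\in L^q(\Om)\subset L^1_{\loc}(\Om)$ suffices. This wrinkle is in fact latent in the paper's own one-line justification, which also cites Theorem \ref{thm:main p} (whose standing hypothesis is $\Om$ bounded), so aside from that the substance of your argument matches the intended proof.
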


Finally, we discuss one corollary of Theorem \ref{thm:main phi}.
In \cite{BN2}, Brezis and Nguyen gave three examples of $\varphi$ when studying the asymptotic behavior of $\Lambda_{p,\delta}(f,\Omega)$. Here, we only consider the most simple one, that is, $\varphi(t)=0$ for $t\in [0,1]$ and $\varphi(t)=1$ for $t>1$. Applying Theorem \ref{thm:main phi} with this $\varphi$, we obtain the following result.  

\begin{corollary}\label{cor:phi}
	Suppose $\mu$ is Ahlfors $Q$-regular with $1<Q<\infty$,
	$\Om\subset X$ is open and bounded, and let $f\in L^1(\Om)$. Then
	\[
		C^{-1}\Vert Df\Vert(\Om)
		\le \limsup_{\delta\to 0}\int_{\Om}\int_{\{x\in \Om\colon |f(x)-f(y)|>\delta\}} \frac{\delta}{d(x,y)^{Q+1}} 
		\,d\mu(x)\,d\mu(y).
	\]
for some constant $C\ge 1$ depending only on the Ahlfors regularity constants.
\end{corollary}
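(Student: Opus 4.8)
The plan is to deduce this directly from Theorem \ref{thm:main phi} applied with $p=1$ and the specific choice $\varphi = \ch_{(1,\infty)}$, after identifying the functional $\Lambda_{1,\delta}$ with the double integral appearing in the statement. First I would observe that with this $\varphi$ one has $\varphi(|f(x)-f(y)|/\delta) = 1$ precisely when $|f(x)-f(y)| > \delta$ and $0$ otherwise, so that
\[
\Lambda_{1,\delta}(f,\Om)
= \int_{\Om}\int_{\{x\in\Om\colon |f(x)-f(y)|>\delta\}}\frac{\delta}{\mu(B(x,d(x,y)))\,d(x,y)}\,d\mu(x)\,d\mu(y);
\]
then, since $\mu$ is Ahlfors $Q$-regular, $\mu(B(x,d(x,y)))$ is comparable to $d(x,y)^Q$ with constants depending only on $C_A$, so the integrand is comparable to $\delta/d(x,y)^{Q+1}$ and the whole functional is comparable (up to a constant depending only on the Ahlfors constants) to the double integral in the statement. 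Next I would check that $\varphi$ satisfies the standing assumptions \eqref{eq:varphi assumption increasing}--\eqref{eq:varphi integral assumption}: it is clearly increasing and bounded by $b=1$, and $\int_0^\infty \varphi(t)t^{-1-1}\,dt = \int_1^\infty t^{-2}\,dt = 1$, so \eqref{eq:varphi integral assumption} holds with $C_\varphi = 1$.

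With these preliminaries, the lower bound is immediate: the first half of Theorem \ref{thm:main phi} with $p=1$ gives $C_1'''\,\Vert Df\Vert(\Om) \le \limsup_{\delta\to 0}\Lambda_{1,\delta}(f,\Om)$ under exactly the hypotheses assumed here ($\mu$ Ahlfors $Q$-regular, $\Om$ open and bounded, $f\in L^1(\Om)$), and combining with the comparability above yields the stated inequality with $C$ depending only on the Ahlfors constants (note $C_\varphi$ is absolute here). The main point requiring a little care is simply that the $E_{p}(f,\Om) = \Vert Df\Vert(\Om)$ convention is being used, and that when $f\notin\BV(\Om)$ both sides are interpreted as $\infty$, so the inequality is trivially true in that case; when $f\in\BV(\Om)$ it is the content of Theorem \ref{thm:main phi}.

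I do not expect a real obstacle here, as the corollary is essentially a specialization; the only thing to be slightly attentive about is that the statement of the corollary asserts only the lower bound (it does not claim the reverse inequality $\limsup \le C\Vert Df\Vert$), which is consistent with the fact that the upper-bound half of Theorem \ref{thm:main phi} would require the extra global hypothesis $f\in\widehat{N}^{1,1}(X)\cap\widehat{N}^{1,q}(X)$ rather than merely $f\in L^1(\Om)$. So the proof is: verify $\varphi$ satisfies \eqref{eq:varphi assumption increasing}--\eqref{eq:varphi integral assumption}, rewrite $\Lambda_{1,\delta}$ explicitly, use Ahlfors regularity to replace $\mu(B(x,d(x,y)))$ by $d(x,y)^Q$ up to constants, and invoke the lower bound in Theorem \ref{thm:main phi}.
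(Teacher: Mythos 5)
Your proposal is correct and matches what the paper intends: the paper offers no separate proof of Corollary \ref{cor:phi} beyond the remark ``applying Theorem \ref{thm:main phi} with this $\varphi$,'' and that is exactly what you do, checking that $\varphi=\ch_{(1,\infty)}$ satisfies \eqref{eq:varphi assumption increasing}--\eqref{eq:varphi integral assumption} (with $C_\varphi=1$) and converting $\mu(B(x,d(x,y)))$ to $d(x,y)^Q$ via Ahlfors regularity. One tiny streamlining worth noting: the proof of the lower bound, Theorem \ref{thm:lower bound phi}, is already stated with $d(x,y)^{Q+p}$ in the denominator rather than $\mu(B(x,d(x,y)))d(x,y)^p$, so you could invoke it directly at $p=1$ and skip the $\Lambda_{1,\delta}$-to-$d^{Q+1}$ comparison step entirely.
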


\begin{remark}
Let $f\in \widehat{N}^{1,p}(X)$, with $1<p<\infty$. In \cite{BP} it is proved that the inequality in Corollary \ref{cor:phi} becomes an equality
in the setting of metric measure spaces endowed with a doubling measure, supporting
a $(1,p)-$Poincar\'e inequality, and such that at $\mu$-a.e. point the tangent space
(in the Gromov-Hausdorff sense) is unique and Euclidean with a fixed dimension. 
\end{remark}

\section{Lower bounds}\label{sec:lower bounds}

In this section we prove the lower bounds of our three main theorems.
As before, $\Om$ is an open subset of $X$.

\subsection{Lower bound of  Theorem \ref{thm:main eps}}

The following theorem proves the lower bound of Theorem \ref{thm:main eps};
the proof is rather standard and follows mostly as in \cite{BN1}.

\begin{theorem}
	Let $1\le p<\infty$ and
	suppose $\{\rho_i\}_{i=1}^{\infty}$ is a sequence of mollifiers satisfying conditions
	\eqref{eq:rho hat minorize}, \eqref{eq:rho hat majorize}.
	Let $f\in L^p(\Om)$. Then
	\[
	C_1' E_{p}(f,\Om) \le \liminf_{i\to\infty}\Psi_{p,i}(f,\Om),
	\]
	where $C_1'$ is a constant depending only on
	the doubling constant of the measure and the constant $C_{\rho}$ associated with the mollifiers.
\end{theorem}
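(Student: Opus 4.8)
The plan is to reduce the lower bound for $\Psi_{p,i}$ to the already-established lower bound for the basic functional $I_{p,i}$ from Theorem \ref{thm:main previous}, via a Hölder/Jensen argument in the exponent. Recall
\[
\Psi_{p,i}(f,\Om)=\big(I_{p+\eps_i,i}(f,\Om)\big)^{p/(p+\eps_i)},
\]
so the issue is that the inner functional is $I$ with exponent $p+\eps_i$ rather than $p$. Since $\Om$ is bounded and $\mu$ doubling, $\mu(\Om)<\infty$, and the integration is over the finite measure space $\Om\times\Om$ against the (non-normalized but controlled) weight $\rho_i(x,y)\,d\mu(x)\,d\mu(y)/d(x,y)^{p}$. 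The first step is therefore to record a total-mass bound: using the majorization condition \eqref{eq:rho hat majorize} together with the doubling property, one shows that for each $i$ the measure $d\nu_i(x,y):=d(x,y)^{-p}\rho_i(x,y)\,\ch_{\Om}(x)\ch_{\Om}(y)\,d\mu(x)\,d\mu(y)$ — or rather a suitable truncated/renormalized version — has $\nu_i(\Om\times\Om)$ bounded above and below by constants independent of $i$, at least after restricting attention to the part of $\rho_i$ relevant to the lower bound via \eqref{eq:rho hat minorize}. Actually, for the lower bound I only need an \emph{upper} bound on an auxiliary total mass; this is where $C_\rho$ and $C_d$ enter.

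The second and main step is the exponent comparison. Fix a small parameter and split $|f(x)-f(y)|^{p+\eps_i}/d(x,y)^{p+\eps_i}$. Apply Hölder's inequality with exponents $(p+\eps_i)/p$ and $(p+\eps_i)/\eps_i$ to the product
\[
\frac{|f(x)-f(y)|^{p}}{d(x,y)^{p}}\cdot\frac{|f(x)-f(y)|^{\eps_i}}{d(x,y)^{\eps_i}}
\]
integrated against $\rho_i\,d\mu\,d\mu$. The first factor yields (a power of) a functional whose liminf is controlled from below by $C_1 E_p$ after we absorb a perturbation of the exponent in $d(x,y)$; the second factor must be shown to stay bounded. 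Controlling $\iint (|f(x)-f(y)|/d(x,y))^{\eps_i}\rho_i\,d\mu\,d\mu$: here one uses that $\eps_i\searrow 0$, that $\Om$ is bounded so $d(x,y)$ is bounded above, and — for the possible blow-up of $d(x,y)^{-\eps_i}$ near the diagonal — one compares against $I_{p,i}$ itself on the region $|f(x)-f(y)|\ge d(x,y)$ and uses $\eps_i\to 0$ together with the $\rho_i$ mass bound on the region $|f(x)-f(y)|<d(x,y)$. This mirrors \cite{BN1}; the upshot is $\limsup_i \iint(\cdots)^{\eps_i}\rho_i\,d\mu\,d\mu\le C$. Combining, $I_{p+\eps_i,i}(f,\Om)^{p/(p+\eps_i)}$ is bounded below by $c\,I_{q_i,i}(f,\Om)$ for a suitable $q_i\to p$, and then one invokes Theorem \ref{thm:main previous} (lower-bound half, which needs only \eqref{eq:rho hat minorize} and doubling, not a Poincaré inequality) after a final adjustment to bring the exponent in the denominator back to $p$ exactly; this last adjustment again uses boundedness of $\Om$ and a one-sided comparison of $d(x,y)^{-(p+\eps_i)}$ against $d(x,y)^{-p}$ on $\{|f(x)-f(y)|\ge d(x,y)\}$ and a trivial bound elsewhere.

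An alternative, cleaner route which I would actually try first: prove the pointwise-in-$i$ inequality $\Psi_{p,i}(f,\Om)\ge c\, \widetilde I_{p,i}(f,\Om)-\text{(error)}$ directly by a single application of Jensen/Hölder, where $\widetilde I_{p,i}$ is the basic functional \eqref{eq:functional basic} with exponent $p$, the error is $O(\eps_i)$ and vanishes in the limit because $f\in L^p(\Om)$ and $\Om$ is bounded, and then take $\liminf$ and quote Theorem \ref{thm:main previous}. Concretely, convexity of $t\mapsto t^{(p+\eps_i)/p}$ and the normalization-type bound on $\rho_i$ let one move the outer power inside; this is exactly the strategy used for the lower bound of the BBM-type formula in the Euclidean case by Brezis--Nguyen. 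The main obstacle I anticipate is the control of the diagonal singularity $d(x,y)^{-\eps_i}$: one cannot bound it uniformly, so the argument must be organized so that this factor only ever multiplies a quantity already of the form $|f(x)-f(y)|^{\eps_i}$ (giving $(|f(x)-f(y)|/d(x,y))^{\eps_i}$, which is $\le 1$ whenever $|f(x)-f(y)|\le d(x,y)$ and otherwise is dominated by the genuine integrand $(|f(x)-f(y)|/d(x,y))^{p}$ since $\eps_i\le p$ for large $i$ and $d(x,y)$ is bounded). Once that bookkeeping is set up, the rest is the standard passage to the limit plus a citation of the previous theorem.
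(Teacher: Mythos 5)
Your second (``cleaner alternative'') route is essentially the paper's proof; your first route, as written, does not go through. The paper's Hölder split is
\[
\frac{|f(x)-f(y)|^p}{d(x,y)^p}\,\rho_i(x,y)
=\left(\frac{|f(x)-f(y)|^{p+\eps_i}}{d(x,y)^{p+\eps_i}}\,\rho_i(x,y)\right)^{p/(p+\eps_i)}\cdot\rho_i(x,y)^{\eps_i/(p+\eps_i)},
\]
integrated over $U\times U$ for a bounded open $U\subset\Om$; Hölder with exponents $(p+\eps_i)/p$ and $(p+\eps_i)/\eps_i$ then gives
\[
I_{p,i}(f,U)\le\Psi_{p,i}(f,U)\cdot\left(\iint_{U\times U}\rho_i\,d\mu\,d\mu\right)^{\eps_i/(p+\eps_i)}
\le\Psi_{p,i}(f,U)\cdot\big(C_{\rho}\,\mu(U)\big)^{\eps_i/(p+\eps_i)},
\]
using the mass bound that \eqref{eq:rho hat majorize} supplies. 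The last factor tends to $1$, so
$\liminf_i\Psi_{p,i}(f,U)\ge\liminf_iI_{p,i}(f,U)\ge C_1E_p(f,U)$ by the lower-bound half of Theorem~\ref{thm:main previous} (which only needs \eqref{eq:rho hat minorize}); one then exhausts $\Om$ by such $U$ using the measure property of $\Vert Df\Vert$ when $p=1$ and \cite[Lemma 2.23]{BB} when $p>1$. With this split the conjugate Hölder factor is just $\iint\rho_i$, not $\iint\big(|f(x)-f(y)|/d(x,y)\big)^{\eps_i}\rho_i$, so the diagonal singularity $d(x,y)^{-\eps_i}$ that worries you never appears on its own, there is no additive error term, and $\Om$ need not be bounded (only each exhausting $U$). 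Your ``error is $O(\eps_i)$'' and the bookkeeping on $\{|f(x)-f(y)|\gtrless d(x,y)\}$ are therefore unnecessary.

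Your first route is flawed as stated: factoring $u^{p+\eps_i}=u^{p}\cdot u^{\eps_i}$ with $u=|f(x)-f(y)|/d(x,y)$ and applying Hölder to this product against $\rho_i\,d\mu\,d\mu$ with exponents $(p+\eps_i)/p$ and $(p+\eps_i)/\eps_i$ turns both factors into $u^{p+\eps_i}$, which gives a tautology; moreover it would upper-bound $I_{p+\eps_i,i}$ when what you need is a lower bound. The idea is salvageable only by moving to the factorization above, in which the $\rho_i$ itself is split between the two Hölder slots.
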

\begin{proof}
	We can assume that $\Om$ is nonempty.
	Consider a nonempty bounded open set $U\subset \Om$.
	We apply H\"older's inequality with the exponents
	\[
	\frac{p+\eps_i}{p}\quad\textrm{and}\quad \frac{p+\eps_i}{\eps_i}
	\]
	to obtain
	\begin{align*}
		&\int_{U}\int_{U} \frac{|f(x)-f(y)|^p}{d(x,y)^p}\rho_i(x,y)\,d\mu(x)\,d\mu(y)\\
		&\le \left(\int_{U}\int_{U} \frac{|f(x)-f(y)|^{p+\eps_i}}{d(x,y)^{p+\eps_i}}\rho_i(x,y)\,d\mu(x)\,d\mu(y)
		\right)^{p/(p+\eps_i)}\\
		&\qquad \times \left(\int_U\int_U\rho_i(x,y)\,d\mu(x)\,d\mu(y)\right)^{\eps_i/(p+\eps_i)}\\
		&\le \left(\int_{U}\int_{U} \frac{|f(x)-f(y)|^{p+\eps_i}}{d(x,y)^{p+\eps_i}}\rho_i(x,y)\,d\mu(x)\,d\mu(y)
		\right)^{p/(p+\eps_i)}
		\left(C_{\rho}\mu(U)\right)^{\eps_i/(p+\eps_i)}
		\quad\textrm{by }\eqref{eq:rho hat majorize}.
	\end{align*}
	Thus
	\begin{equation}\label{eq:U initial estimate}
	\begin{split}
		&\left(C_{\rho}\mu(U)\right)^{-\eps_i/(p+\eps_i)}
		\int_{U}\int_{U} \frac{|f(x)-f(y)|^p}{d(x,y)^p}
		\rho_i(x,y)\,d\mu(x)\,d\mu(y)\\
		&\qquad \le \left(\int_{U}\int_{U} \frac{|f(x)-f(y)|^{p+\eps_i}}{d(x,y)^{p+\eps_i}}\rho_i(x,y)\,d\mu(x)\,d\mu(y)\right)^{p/(p+\eps_i)}.
	\end{split}
	\end{equation}
	We estimate
	\begin{align*}
		&\liminf_{i\to\infty}\left(\int_{\Om}\int_{\Om} \frac{|f(x)-f(y)|^{p+\eps_i}}{d(x,y)^{p+\eps_i}}\rho_i(x,y)\,d\mu(x)\,d\mu(y)\right)^{p/(p+\eps_i)}\\
		&\qquad \ge \liminf_{i\to\infty}\left(\int_{U}\int_{U} \frac{|f(x)-f(y)|^{p+\eps_i}}{d(x,y)^{p+\eps_i}}\rho_i(x,y)\,d\mu(x)\,d\mu(y)\right)^{p/(p+\eps_i)}\\
		&\qquad \ge \liminf_{i\to\infty}\int_{U}\int_{U} \frac{|f(x)-f(y)|^p}{d(x,y)^p}\rho_i(x,y)\,d\mu(x)\,d\mu(y)
		\quad\textrm{by }\eqref{eq:U initial estimate}\\
		&\qquad \ge C_1 E_{p}(f,U)\quad\textrm{by Theorem }
		\ref{thm:main previous}.
	\end{align*}
Since this holds for every bounded open $U\subset \Om$,
using the measure property of $\Vert Df\Vert$ in the case $p=1$ and 
\cite[Lemma 2.23]{BB} in the case $1<p<\infty$,
we obtain
\[
\liminf_{i\to\infty}\left(\int_{\Om}\int_{\Om} \frac{|f(x)-f(y)|^{p+\eps_i}}{d(x,y)^{p+\eps_i}}\rho_i(x,y)\,d\mu(x)\,d\mu(y)\right)^{p/(p+\eps_i)}
\ge C_1 E_{p}(f,\Om).
\]
\end{proof}

\subsection{Lower bound of  Theorem \ref{thm:main p}}

Given a ball $B=B(x,r)$ with a specified center $x\in X$ and radius $r>0$, we denote
$2B:=B(x,2r)$.
The distance between two sets $A,D\subset X$ is denoted by
\[
\dist(A,D):=\inf\{d(x,y)\colon x\in A,\,y\in D\}.
\]

The next lemma is standard; for a proof see \cite[Lemma 5.1]{LPZ}.
Recall the definition \eqref{eq:neighborhood notation}.

\begin{lemma}\label{lem:covering lemma}
	Consider an open set $U\subset \Om$ with $\dist(U,X\setminus \Om)>0$,
	and a scale $0<s<\dist(U,X\setminus \Om)/10$.
	Then we can choose an at most countable covering
	$\{B_j=B(x_j,s)\}_{j}$ of $U(5s)$ such that $x_j\in U(5s)$,
	each ball $5B_j$ is contained in $\Om$, and the balls $\{5B_j\}_{j=1}^{\infty}$ can be divided
	into $C_d^8$ collections of pairwise disjoint balls.
\end{lemma}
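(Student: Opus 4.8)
The plan is to produce the covering via the standard basic $5r$-covering lemma and then control the overlap of the enlarged balls using the doubling property. First I would fix the open set $U\subset\Om$ with $\dist(U,X\setminus\Om)>0$ and a scale $0<s<\dist(U,X\setminus\Om)/10$, and consider the (uncountable) family $\{B(x,s)\colon x\in U(5s)\}$ of balls of radius $s$ centered at points of the neighborhood $U(5s)$. Since $X$ is proper (being complete and doubling), $U(5s)$ is separable, so by the basic $5r$-covering lemma there is an at most countable subfamily $\{B_j=B(x_j,s)\}_j$ with $x_j\in U(5s)$, the balls $B_j$ pairwise disjoint, and $U(5s)\subset\bigcup_j 5B_j$; in particular the $5B_j$ cover $U(5s)$. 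Note that each $x_j\in U(5s)$ means $\dist(x_j,U)<5s$, hence any point of $5B_j$ is within $10s<\dist(U,X\setminus\Om)$ of $U$, so $5B_j\subset\Om$; this gives the containment $5B_j\subset\Om$ claimed in the statement.

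Next I would bound the multiplicity of the family $\{5B_j\}_j$. Suppose $x\in 5B_{j_1}\cap\dots\cap 5B_{j_N}$ for distinct indices; then all the centers $x_{j_k}$ lie in $B(x,5s)$, and the disjoint balls $B(x_{j_k},s)$ are all contained in $B(x,6s)$. Comparing measures, $\sum_{k=1}^N\mu(B(x_{j_k},s))\le\mu(B(x,6s))$. Since $B(x,6s)\subset B(x_{j_k},11s)$ for each $k$, the doubling property applied (here four times, as $11<16=2^4$) gives $\mu(B(x,6s))\le\mu(B(x_{j_k},11s))\le C_d^4\mu(B(x_{j_k},s))$, so each term on the left is at least $C_d^{-4}\mu(B(x,6s))$, forcing $N\le C_d^4$. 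Thus $\{5B_j\}_j$ has bounded overlap with multiplicity at most $C_d^4$.

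Finally, a family of sets with multiplicity at most $m$ can be split into $m$ subfamilies of pairwise disjoint sets: this is a standard greedy/coloring argument (process the countable list in order, assigning each $5B_j$ the least ``color'' in $\{1,\dots,m\}$ not used by an earlier $5B_{j'}$ intersecting it; since at most $m-1$ earlier balls meet $5B_j$, a color is available). Applying this with $m=C_d^4\le C_d^8$ yields the decomposition into at most $C_d^8$ collections of pairwise disjoint balls, completing the proof. The only mildly delicate point is the bookkeeping with the radii in the doubling estimate (ensuring $11s$-balls are comparable to $s$-balls with a constant depending only on $C_d$); the stated constant $C_d^8$ is not sharp but is a safe bound, and matching it exactly is routine.
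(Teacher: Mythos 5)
There is a genuine gap in your construction. The lemma asks for the balls $\{B_j = B(x_j, s)\}_j$ themselves to cover $U(5s)$, with the dilated balls $\{5B_j\}_j$ having bounded overlap. What you produce from the $5r$-covering lemma is the opposite object: \emph{disjoint} $\{B_j\}_j$ with $\{5B_j\}_j$ covering $U(5s)$. Disjoint balls of radius $s$ cannot cover an open set, so your $\{B_j\}_j$ are not a covering of $U(5s)$ and the conclusion of the lemma is not obtained. This matters downstream: the lemma is used to build a partition of unity $\{\phi_j\}$ with $\operatorname{spt}\phi_j\subset 2B_j$ subordinate to the covering, and that construction requires the $B_j$ (or at least the $2B_j$) to cover $U(5s)$. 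The standard fix, which is presumably what the cited [LPZ, Lemma 5.1] does, is to take a maximal $s$-separated set $\{x_j\}\subset U(5s)$ (or, equivalently, to run the $5r$-covering lemma on balls of radius $s/5$). Maximality forces $\{B(x_j,s)\}_j$ to cover $U(5s)$, while the balls $B(x_j,s/2)$ are pairwise disjoint and can play the role of your disjoint balls in the packing estimate. Your verification that $5B_j\subset\Om$ is correct and carries over unchanged.

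A second, smaller issue is the step ``a family of sets with multiplicity at most $m$ can be split into $m$ subfamilies of pairwise disjoint sets.'' As a general statement about families of sets this is false (three sets pairwise intersecting with empty triple intersection have multiplicity $2$ but chromatic number $3$), and a pointwise multiplicity bound does not directly give the degree bound ``at most $m-1$ earlier balls meet $5B_j$'' that the greedy coloring needs. What you should bound instead is the \emph{intersection degree}: if $5B_{j'}\cap 5B_j\neq\emptyset$ then $d(x_j,x_{j'})<10s$, so the disjoint $B(x_{j'},s/2)$ lie in $B(x_j,11s)$, and $B(x_j,11s)\subset B(x_{j'},21s)$, whence $\mu\bigl(B(x_{j'},s/2)\bigr)\ge C_d^{-6}\mu\bigl(B(x_j,11s)\bigr)$ (since $21s/(s/2)=42<2^{6}$). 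Summing over the disjoint balls gives at most $C_d^{6}$ indices $j'$, so the greedy coloring succeeds with $C_d^{6}+1\le C_d^{8}$ colors. This is the same packing idea you used, just aimed at the right quantity; both gaps are repairable, but neither is merely cosmetic.
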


Given such a covering of $U(5s)$, 
we can take a partition of unity $\{\phi_j\}_{j=1}^{\infty}$ subordinate to the
covering, such that $0\le \phi_j\le 1$,
\begin{equation}\label{eq:Lipschitz function}
	\textrm{each } \phi_j  \textrm{ is a }3C_d^8/s\textrm{-Lipschitz function},
\end{equation}
and $\supp(\phi_j)\subset 2B_j$ for each 
$j\in\N$;
see e.g. \cite[p. 104]{HKST}.
Finally, we can define a \emph{discrete convolution} $h$ of 
any $f\in L^1(\Om)$ with respect to the covering by
\[
h:=\sum_{j}f_{B_j}\phi_j.
\]
Clearly $h\in \Lip_{\loc}(U)$.

Now we prove the lower bound of  Theorem \ref{thm:main p}.

\begin{theorem}\label{thm:lower bound}
	Let $1\le p<\infty$, $1\le q<\infty$, and suppose $\rho_i$ is a sequence of mollifiers satisfying
	either \eqref{eq:rho hat minorize} or the same condition with $p$ replaced by $q$.
	Suppose $f\in L^p(\Om)$. Then
	\begin{equation}\label{eq:main theorem equation lower}
		C_1'' E_{p}(f,\Om)
		\le \liminf_{i\to\infty}\int_{\Om}\left[\int_{\Om} \left(\frac{|f(x)-f(y)|^p}{d(x,y)^p}\right)^q\rho_i(x,y)\,d\mu(x)\right]^{1/q}\,d\mu(y)
	\end{equation}
	for some constant $C_1''$ depending only on $p$, $q$, $C_{\rho}$,
	 and on the doubling constant of the measure.
\end{theorem}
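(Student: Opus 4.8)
The plan is to reduce the lower bound for $\Phi_{p,q,i}$ to the already-established lower bound for the basic functional $I_{p,i}$ (Theorem \ref{thm:main previous}), by exploiting the convexity structure of the inner integral together with Jensen's inequality. First I would fix a nonempty bounded open set $U$ with $\dist(U,X\setminus\Om)>0$ and work inside $U$, so that at the end I can let $U\nearrow\Om$ and use the measure property of $\Vert Df\Vert$ (when $p=1$) or \cite[Lemma 2.23]{BB} (when $1<p<\infty$), exactly as in the lower bound proof of Theorem \ref{thm:main eps}. The key point is that, by assumption, $\rho_i$ satisfies the minorization \eqref{eq:rho hat minorize} with exponent $p$ or with exponent $q$; in either case one has pointwise control of $\rho_i(x,y)$ from below by a normalized characteristic function of a ball $B(y,r_i)$ times a power of $d(x,y)/r_i$.

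The heart of the argument is to compare $\Phi_{p,q,i}$ with $I_{p,i}$. For the inner integral, write $d\nu_y(x):=\rho_i(x,y)\,d\mu(x)$; since $\int_U \rho_i(x,y)\,d\mu(x)\le C_\rho$ by \eqref{eq:rho hat majorize} — or, if we only assume the minorization, we normalize against $\int_{B(y,r_i)}$ directly — the inner integral $\big[\int_U (|f(x)-f(y)|^p/d(x,y)^p)^q\,d\nu_y(x)\big]^{1/q}$ dominates, via Jensen applied to the convex function $t\mapsto t^q$ on the probability measure $d\nu_y/\nu_y(U)$, a constant multiple of $\int_U (|f(x)-f(y)|^p/d(x,y)^p)\,d\nu_y(x)$. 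Integrating in $y$ over $U$ then yields $\liminf_i \Phi_{p,q,i}(f,U)\ge c\,\liminf_i I_{p,i}(f,U)$ up to constants depending only on $C_\rho$, $q$ and the doubling constant — but one must be careful that the normalization factor $\nu_y(U)^{1/q-1}$ does not degenerate, which is where the explicit form of the minorization \eqref{eq:rho hat minorize} is used: it guarantees $\nu_y(U)$ is bounded above and below on the relevant set of $y$'s. Then Theorem \ref{thm:main previous} gives $\liminf_i I_{p,i}(f,U)\ge C_1 E_p(f,U)$, and combining everything gives the claim on $U$.

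An alternative, and perhaps cleaner, route that avoids the normalization subtlety is to bound the inner integral below directly: by the minorization \eqref{eq:rho hat minorize} (with exponent $p$, say),
\[
\left[\int_\Om\Big(\tfrac{|f(x)-f(y)|^p}{d(x,y)^p}\Big)^q\rho_i(x,y)\,d\mu(x)\right]^{1/q}
\ge C_\rho^{-1/q}\left[\vint{B(y,r_i)\cap\Om}\Big(\tfrac{|f(x)-f(y)|}{r_i}\Big)^{pq}\,d\mu(x)\right]^{1/q},
\]
and then apply Jensen with $t\mapsto t^q$ on the averaging ball $B(y,r_i)\cap\Om$ to pull the power out, landing on $C_\rho^{-1/q}\vint{B(y,r_i)\cap\Om}(|f(x)-f(y)|/r_i)^p\,d\mu(x)$. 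This is exactly (a constant times) the integrand giving a lower bound for $I_{p,i}$ associated with the ``first mollifier'' type $\rho_i(x,y)\ge C_\rho^{-1}(d(x,y)^p/r_i^p)\ch_{B(y,r_i)}(x)/\mu(B(y,r_i))$, so integrating in $y$ and invoking Theorem \ref{thm:main previous} (via the lower bound half, which needs only \eqref{eq:rho hat minorize}) finishes the estimate on $U$; the case where the minorization holds with $q$ in place of $p$ is handled the same way, with $pq$ replaced accordingly and Jensen adjusted.

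The main obstacle I anticipate is the bookkeeping around which exponent ($p$ versus $q$) the minorization holds for, and ensuring that in the Jensen step the exponents line up so that the final integrand is comparable to the one appearing in the lower bound of Theorem \ref{thm:main previous} with a legitimate mollifier; in particular one wants the resulting comparison mollifier to still satisfy \eqref{eq:rho hat minorize} so that Theorem \ref{thm:main previous} applies verbatim. A secondary, more routine obstacle is the passage from bounded open $U$ to $\Om$, but this is handled by the same inner-regularity argument already used earlier in the paper, so I expect it to be short. No Poincar\'e inequality or extension-domain hypothesis is needed for the lower bound, consistent with the statement.
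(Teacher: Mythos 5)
Your second (``cleaner'') route is correct and gives a genuinely different, and in fact shorter, proof than the paper's. The paper does \emph{not} reduce to Theorem~\ref{thm:main previous}: it starts from the same estimate
\[
\int_{\Om}\left[\frac{1}{\mu(B(y,r_i))}\int_{B(y,r_i)\cap\Om}\frac{|f(x)-f(y)|^{pq}}{r_i^{pq}}\,d\mu(x)\right]^{1/q}\,d\mu(y)\le (M+\eps)C_\rho^{1/q}
\]
(which in turn comes from the minorization plus $d(x,y)\le r_i$, for either exponent version), but then re-runs the whole lower-bound machinery by hand --- Whitney-type covering, discrete convolutions $h_i$ with $\Lip h_i$ controlled by this quantity, weak compactness and Mazur's lemma for $p>1$ (resp.\ the $\BV$ definition for $p=1$), and exhaustion of $\Om$ by $U\Subset\Om$. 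Your reduction short-circuits all of that: from the minorization one gets, for a.e.\ $y$,
\[
\left[\int_\Om\Big(\tfrac{|f(x)-f(y)|^p}{d(x,y)^p}\Big)^q\rho_i(x,y)\,d\mu(x)\right]^{1/q}
\ge C_\rho^{-1/q}\left[\frac{1}{\mu(B(y,r_i))}\int_{B(y,r_i)\cap\Om}\Big(\tfrac{|f(x)-f(y)|}{r_i}\Big)^{pq}\,d\mu(x)\right]^{1/q},
\]
and Jensen (power mean) with the probability measure $\ch_{B(y,r_i)\cap\Om}\,d\mu/\mu(B(y,r_i)\cap\Om)$, together with the fact that $\big(\mu(B(y,r_i)\cap\Om)/\mu(B(y,r_i))\big)^{1/q-1}\ge 1$ since $q\ge 1$, yields the lower bound $C_\rho^{-1/q}\,\mu(B(y,r_i))^{-1}\int_{B(y,r_i)\cap\Om}(|f(x)-f(y)|/r_i)^p\,d\mu(x)$. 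Integrating in $y$ gives $\Phi_{p,q,i}(f,\Om)\ge C_\rho^{-1/q} I_{p,i}(f,\Om)$ for the comparison mollifier $\tilde\rho_i(x,y)=(d(x,y)^p/r_i^p)\ch_{B(y,r_i)}(x)/\mu(B(y,r_i))$, which satisfies \eqref{eq:rho hat minorize} with constant $1$, and then the lower bound half of Theorem~\ref{thm:main previous} (which needs no Poincar\'e inequality) finishes the proof on all of $\Om$ directly --- you don't even need the $U\nearrow\Om$ exhaustion. The resulting constant $C_1''=C_\rho^{-1/q}C_1$ depends only on $p$, $q$, $C_\rho$, and the doubling constant, as required.

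Two small cautions. First, your displayed inequality normalizes by $\mu(B(y,r_i)\cap\Om)$ (written $\vint{B(y,r_i)\cap\Om}$) rather than by $\mu(B(y,r_i))$, which is what the minorization actually gives; since $\mu(B(y,r_i)\cap\Om)\le\mu(B(y,r_i))$ this is not a valid lower bound as stated, though the direction of the mismatch works in your favor once you carry the factor $\alpha(y)=\mu(B(y,r_i)\cap\Om)/\mu(B(y,r_i))$ through Jensen correctly, as above. Second, you should discard your first route entirely: normalizing by $\nu_y(U)=\int_U\rho_i\,d\mu$ requires an upper bound on $\nu_y(U)$, i.e.\ the majorization \eqref{eq:rho hat majorize}, which is \emph{not} among this theorem's hypotheses (only the minorization is assumed), so the second route is not just cleaner --- it is the only one that works under the stated assumptions.
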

Note that here we do not impose any conditions on the open set $\Om\subset X$.

\begin{proof}
	We can assume that
	\[
	\liminf_{i\to\infty}\int_{\Om}\left[\int_{\Om} \left(\frac{|f(x)-f(y)|^p}{d(x,y)^p}\right)^q\rho_i(x,y)\,d\mu(x)\right]^{1/q}\,d\mu(y)=:M<\infty.
	\]
	Fix $0<\eps<1$. Passing to a subsequence (not relabeled), we can assume that
	\[
	\int_{\Om}\left[\int_{\Om}
	\left(\frac{|f(x)-f(y)|^p}{d(x,y)^p}\right)^q\rho_i(x,y)\,d\mu(x)\right]^{1/q}\,d\mu(y)\le M+\eps \quad\textrm{for all }i\in\N.
	\]
	Using the assumption \eqref{eq:rho hat minorize}
	(if we have this condition with $p$ replaced by $q$, the next three lines are similar), we get
	\[
		\int_{\Om} \left[\int_{\Om} \frac{|f(x)-f(y)|^{pq}}{r_i^p d(x,y)^{pq-p}}
		\frac{\ch_{B(y,r_i)\cap \Om}(x)}{\mu(B(y,r_i))}\,d\mu(x)\right]^{1/q}\,d\mu(y)
		\le (M+\eps)C_{\rho}^{1/q},
	\]
	and thus
	\begin{equation}\label{eq:t arbitrarily small}
		\int_{\Om} \left[\int_{\Om} \frac{|f(x)-f(y)|^{pq}}{r_i^{pq}}
		\frac{\ch_{B(y,r_i)\cap \Om}(x)}{\mu(B(y,r_i))}\,d\mu(x)\right]^{1/q}\,d\mu(y)
		\le (M+\eps)C_{\rho}^{1/q}.
	\end{equation}
	Fix $i\in\N$.
	Let $U\subset \Om$ be open with $\dist(U,X\setminus \Om)>r_i$, and
	let $s:=r_i/10$.
	Consider a covering $\{B_j\}_{j=1}^{\infty}$ of $U(5s)$ at scale $s>0$,
	as described in Lemma \ref{lem:covering lemma}. Then consider the discrete convolution
	\[
	h:=\sum_{j}f_{B_j}\phi_j.
	\]
	Recall the definition of the Lipschitz number $\Lip h$ from \eqref{eq:Lip}.
	Suppose $x\in U$. Then $x\in B_j$ for some $j\in\N$. Consider any other point $y\in B_j$.
	Denote by $I_j$ those $k\in\N$ for which $2B_k \cap B_j\neq \emptyset$.
	We estimate
	\begin{equation}\label{eq:upper gradient of discrete convolution}
		\begin{split}
			|h(x)-h(y)|
			& = \left|\sum_{k\in I_j}f_{B_k}(\phi_k(x)-\phi_k(y))\right|\\
			& = \left|\sum_{k\in I_j}(f_{B_k}-f_{B_j})(\phi_k(x)-\phi_k(y))\right|\\
			&\le \frac{3C_d^8 d(x,y)}{s}\sum_{k\in I_j}\,|f_{B_k}-f_{B_j}|\quad\textrm{by }\eqref{eq:Lipschitz function}\\
			&\le \frac{3C_d^8 d(x,y)}{s}\left(\sum_{k\in I_j}\,\vint{B_k}|f-f_{5B_j}|\,d\mu
			+\sum_{k\in I_j}\,\vint{B_j}|f-f_{5B_j}|\,d\mu\right)\\
			&\le \frac{6C_d^{11} d(x,y)}{s}\sum_{k\in I_j}\,\vint{5B_j}|f-f_{5B_j}|\,d\mu\\
			&\le \frac{6 C_d^{19} d(x,y)}{s}\,\vint{5B_j}\,\vint{5B_j}|f(z)-f(w)|\,d\mu(z)\,d\mu(w),
		\end{split}
	\end{equation}
	since by Lemma \ref{lem:covering lemma} we know that $I_j$ has cardinality at most $C_d^8$.
	Letting $y\to x$, we obtain an estimate for $\Lip_h$ in the ball $B_j$.
	In total, we conclude (we track the constants for a while in order to make the estimates more explicit)
	that in $U$ it holds that
	\[
	\Lip h
	\le \frac{6 C_d^{19}}{s}\sum_{j}\,\ch_{B_j}\vint{5B_j}\,\vint{5B_j}|f(x)-f(y)|\,d\mu(x)\,d\mu(y).
	\]
	Now
	\begin{align*}
			\Lip h
			&\le \frac{6 C_d^{19}}{s}
			\sum_{j}\ch_{B_j}\vint{5B_j}\,\vint{5B_j}|f(x)-f(y)|\,d\mu(x)\,d\mu(y)\\ 
			&\le \frac{6 C_d^{19}}{s}
			\sum_{j}\ch_{B_j}\left[\,\vint{5B_j}\left[\,\vint{5B_j}|f(x)-f(y)|^{pq}\,d\mu(x)\right]^{1/q}\,d\mu(y)\right]^{1/p}
			\quad\textrm{by H\"older}\\ 
			&\le \frac{6 C_d^{21}}{s}
			\sum_{j}\ch_{B_j}\left[\,\vint{5B_j}\left[\int_{5B_j}|f(x)-f(y)|^{pq}\frac{\ch_{B(y,10s)}(x)}
			{\mu(B(y,10s))}\,d\mu(x)\right]^{1/q}\,d\mu(y)\right]^{1/p}.
		\end{align*}
	Since the balls  $\{5B_j\}_{j}$ and so also the balls $\{B_j\}_{j}$ can be divided
	into $C_d^8$ collections of pairwise disjoint balls, in $U$ it holds that
	\[
	(\Lip h)^p
	\le \left(\frac{6 C_d^{29}}{s}\right)^p
	\sum_{j}\vint{5B_j}\left[\int_{5B_j}|f(x)-f(y)|^{pq}\frac{\ch_{B(y,10s)}(x)}
	{\mu(B(y,10s))}\,d\mu(x)\right]^{1/q}\,d\mu(y),
	\]
	and so
	\begin{equation}\label{eq:Lip h estimate}
		\begin{split}
			\int_U (\Lip h)^p\,d\mu
			&\le \left(\frac{6 C_d^{29}}{s}\right)^p
			\sum_{j}\int_{5B_j}\left[\int_{5B_j}|f(x)-f(y)|^{pq}\frac{\ch_{B(y,10s)}(x)}
			{\mu(B(y,10s))}\,d\mu(x)\right]^{1/q}\,d\mu(y)\\
			&\le \left(\frac{6 C_d^{37}}{s}\right)^p\int_{\Om}\left[\int_{\Om}|f(x)-f(y)|^{pq}\frac{\ch_{B(y,10s)\cap \Om}(x)}
			{\mu(B(y,10s))}\,d\mu(x)\right]^{1/q}\,d\mu(y)\\
			&\le C(M+\eps)C_{\rho}^{1/q}
		\end{split}
	\end{equation}
	by \eqref{eq:t arbitrarily small}, with $C:=(60 C_d^{37})^p$.
	We know that the minimal $p$-weak upper gradient $g_{h}$ of $h$ in $U$ satisfies $g_{h}\le \Lip h$ $\mu$-a.e. in $U$,
	see e.g. \cite[Proposition 1.14]{BB}.
	
	Recall that we can do the above for each $r_i$, and that for a fixed $i$ we denoted
	$s=r_i/10$.
	From now on, we can consider any open $U\subset \Om$ with $\dist(U,X\setminus \Om)>0$.
	We get a sequence of discrete convolutions $\{h_i\}_{i=1}^{\infty}$ corresponding
	to scales $s_i\searrow 0$, such that $\{g_{h_i}\}_{i=1}^{\infty}$ is a bounded sequence in
	$L^p(U)$. From the properties of discrete convolutions, see e.g. \cite[Lemma 5.3]{HKT},
	we know that $h_i\to f$ in $L^p(U)$.\\
	
	\textbf{Case $p=1$:}\\
	We get
	\[
	\Vert Df\Vert(U)
	\le \liminf_{i\to\infty}\int_U g_{h_i}\,d\mu
	\le \liminf_{i\to\infty}\int_U \Lip h_i\,d\mu
	\le  C(M+\eps)C_{\rho}^{1/q},
	\]
	and so $f\in \BV(U)$.
	Note that $\Vert Df\Vert$ is a Radon measure on $\Om$.
	Exhausting $\Om$ by sets $U$, we obtain
	\begin{align*}
		\Vert Df\Vert(\Om)
		&\le C(M+\eps)C_{\rho}^{1/q}\\
		&= CC_{\rho}^{1/q}\left(	\liminf_{i\to\infty}\int_{\Om}\left[\int_{\Om} \left(\frac{|f(x)-f(y)|}{d(x,y)}\right)^q\rho_i(x,y)\,d\mu(x)\right]^{1/q}\,d\mu(y)+\eps\right).
	\end{align*}
	Letting $\eps\to 0$, this proves \eqref{eq:main theorem equation lower}.\\
	
	\textbf{Case $1<p<\infty$:}\\
	By \eqref{eq:Lip h estimate}, $\{g_{h_i}\}_{i=1}^{\infty}$ is a bounded sequence in $L^p(U)$.
	By reflexivity of the space $L^p(U)$, we find a subsequence of $\{g_{h_i}\}_{i=1}^{\infty}$ (not relabeled)
	and $g\in L^p(U)$ such that $g_{h_i}\to g$ weakly in $L^p(U)$ (see e.g. \cite[Section 2]{HKST}).
	By Mazur's lemma (Theorem \ref{thm:Mazur lemma}), for suitable convex combinations we get the
	strong convergence $\sum_{l=i}^{N_i}a_{i,l} g_{h_l}\to g$ in $L^p(U)$.
	Note that also $\sum_{l=i}^{N_i}a_{i,l} h_l\to f$ in $L^p(U)$.
	Using e.g. \cite[Proposition 2.3]{BB}, we know that there exists a function $\widehat{f}=f$
	$\mu$-a.e. in $U$ such that $g$ is a $p$-weak upper gradient of $\widehat{f}$ in $U$.
	We get
	\[
	E_{p}(f,U)
	\le \int_U g^p\,d\mu
	\le \limsup_{i\to\infty}\int_U (g_{h_i})^p\,d\mu
	\le \limsup_{i\to\infty}\int_U (\Lip h_i)^p\,d\mu
	\le  C(M+\eps)C_{\rho}^{1/q}.
	\]
	Exhausting $\Om$ by sets $U$ and using \cite[Lemma 2.23]{BB}, we obtain
	\begin{align*}
		E_{p}(f,\Om)
		&\le C(M+\eps)C_{\rho}^{1/q}\\
		&\le CC_{\rho}^{1/q}\left(	\liminf_{i\to\infty}\int_{\Om}\left[\int_{\Om} \left(\frac{|f(x)-f(y)|^p}{d(x,y)^p}\right)^q\rho_i(x,y)\,d\mu(x)\right]^{1/q}\,d\mu(y)+\eps\right).
	\end{align*}
	Letting $\eps\to 0$, this proves \eqref{eq:main theorem equation lower}.
\end{proof}

\subsection{Lower bound of  Theorem \ref{thm:main phi}}

The following theorem proves the lower bound of Theorem \ref{thm:main phi}.

\begin{theorem}\label{thm:lower bound phi}
	Let $1\le p<\infty$ and suppose $\varphi\colon [0,\infty)\to [0,\infty)$ satisfies assumptions
	\eqref{eq:varphi assumption increasing}
	and \eqref{eq:varphi integral assumption}.
	Suppose $\mu$ is Ahlfors $Q$-regular,
	$\Om$ is bounded,  and let $f\in L^p(\Om)$. Then
	\[
		E_{p}(f,\Om)
		\le C\limsup_{\delta\to 0}\int_{\Om}\int_{\Om} \frac{\delta^p\varphi(|f(x)-f(y)|/\delta)}{d(x,y)^{Q+p}} 
		\,d\mu(x)\,d\mu(y),
	\]
	where the constant $C$  depends only on $p$, $C_{\varphi}$, and the Ahlfors regularity constants. 
\end{theorem}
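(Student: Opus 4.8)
The plan is to reduce the lower bound for $\Lambda_{p,\delta}$ to the lower bound for the functional $I_{p,i}$ already covered by Theorem \ref{thm:main previous}, by producing, from the assumption that the $\limsup$ is finite, a sequence of mollifiers $\rho_i$ satisfying \eqref{eq:rho hat minorize} together with a uniform bound on the corresponding $I_{p,i}(f,\Om)$. The key elementary observation is that, since $\varphi$ is increasing and $\int_0^\infty \varphi(t)t^{-1-p}\,dt\ge C_\varphi^{-1}>0$, there is a fixed $t_0>0$ with $\varphi(t_0)>0$; indeed one can choose $t_0$ and $c_0:=\varphi(t_0)>0$ depending only on $C_\varphi$ (and $p$) so that $\varphi(t)\ge c_0\ch_{[t_0,\infty)}(t)$. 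This lets us bound from below: for any $\delta>0$,
\[
\frac{\delta^p\varphi(|f(x)-f(y)|/\delta)}{d(x,y)^{Q+p}}
\ge c_0\,\frac{\delta^p}{d(x,y)^{Q+p}}\,\ch_{\{|f(x)-f(y)|\ge t_0\delta\}}(x,y).
\]

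Next, with $\mu$ Ahlfors $Q$-regular we have $\mu(B(y,d(x,y)))\simeq d(x,y)^Q$, so $d(x,y)^{Q+p}\simeq \mu(B(y,d(x,y)))d(x,y)^p$; thus up to a constant depending only on $C_A$ and $p$ the functional $\Lambda_{p,\delta}$ controls
\[
J_\delta:=\int_\Om\int_\Om \frac{\delta^p}{\mu(B(y,d(x,y)))\,d(x,y)^p}\,\ch_{\{|f(x)-f(y)|\ge t_0\delta\}}\,d\mu(x)\,d\mu(y).
\]
Now I compare $J_\delta$ with $I_{p,i}(f,\Om)$ for a suitable mollifier. Take $r_i\searrow 0$ and $\delta_i:=$ (a scalar multiple of $r_i$ chosen via the $\limsup$) and define
\[
\rho_i(x,y):=\frac{d(x,y)^p}{t_0^p r_i^p}\,\frac{\ch_{B(y,r_i)}(x)}{\mu(B(y,r_i))}\quad\text{or, more robustly,}\quad \rho_i(x,y):=\frac{d(x,y)^p}{\delta_i^p}\cdot\frac{1}{\mu(B(y,d(x,y)))}\,\ch_{\{d(x,y)<r_i\}},
\]
so that $\rho_i$ satisfies \eqref{eq:rho hat minorize} (Ahlfors regularity makes $\mu(B(y,d(x,y)))\simeq \mu(B(y,r_i))(d(x,y)/r_i)^Q$ on $B(y,r_i)$, which suffices). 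The point is that on the region $d(x,y)<r_i$ one has $|f(x)-f(y)|^p/d(x,y)^p \cdot \rho_i(x,y) \simeq \delta_i^{-p}|f(x)-f(y)|^p/\mu(B(y,d(x,y)))$, and splitting the inner region into dyadic annuli $2^{-k-1}r_i\le d(x,y)<2^{-k}r_i$ and summing a geometric-type series (using Ahlfors regularity to control $\int_{\{|f(x)-f(y)|\ge t_0\delta\}}$ over each annulus) lets one dominate $I_{p,i}(f,\Om)$ by a constant times $\limsup_\delta \Lambda_{p,\delta}(f,\Om)$ plus a vanishing error — essentially the same layer-cake / geometric-series argument used in Corollary \ref{cor:epsilon consequence} and in \cite{BN2}. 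Once $I_{p,i}(f,\Om)$ is bounded uniformly along this sequence, Theorem \ref{thm:main previous} (the lower bound, valid with no condition on $\Om$) gives $C_1 E_p(f,\Om)\le \liminf_i I_{p,i}(f,\Om)\le C\limsup_\delta\Lambda_{p,\delta}(f,\Om)$, which is the claim after exhausting $\Om$ by bounded open subsets if needed.

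The main obstacle will be the bookkeeping in the layer-cake step: one must integrate $\varphi(|f(x)-f(y)|/\delta)$ against $d(x,y)^{-Q-p}$ and recognize, via Fubini in the variable $\delta$ (or via summation over dyadic scales of $\delta$) and the lower bound $\int_0^\infty\varphi(t)t^{-1-p}\,dt\ge C_\varphi^{-1}$, that averaging $\Lambda_{p,\delta}$ over a dyadic range of $\delta$ reproduces a fixed multiple of the $I_{p,i}$-type quantity with mollifier $\rho_i(x,y)\simeq d(x,y)^p/(r_i^p\mu(B(y,d(x,y))))\ch_{\{d(x,y)<r_i\}}$; the discarded tails $\{|f(x)-f(y)|\ge t_0\delta,\ d(x,y)\ge r_i\}$ must be shown to contribute a controlled amount, which again follows from Ahlfors regularity since $f\in L^p(\Om)$ and $\Om$ is bounded. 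Modulo this, everything else is a direct appeal to Theorem \ref{thm:main previous}.
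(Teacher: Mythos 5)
Your high-level idea — use monotonicity of $\varphi$ to bound it below by a step function, reduce $\Lambda_{p,\delta}$ to a level-set functional, and then transfer the lower bound from the mollified difference-quotient functional $I_{p,i}$ — has the right flavor but contains a genuine convergence gap at its core, and a couple of secondary issues.

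The central problem is the layer-cake step. You want to bound $I_{p,i}(f,\Om)$ for a local mollifier $\rho_i(x,y)\simeq\frac{d(x,y)^p}{r_i^p}\frac{\ch_{B(y,r_i)}(x)}{\mu(B(y,r_i))}$ by $\limsup_\delta\Lambda_{p,\delta}$. Using $|f(x)-f(y)|^p=p\,t_0^p\int_0^\infty\delta^{p-1}\ch_{\{|f(x)-f(y)|>t_0\delta\}}\,d\delta$ (or equivalently a dyadic sum in $\delta$), and the kernel comparison $\frac{1}{r_i^p\mu(B(y,r_i))}\lesssim\frac{1}{d(x,y)^p\mu(B(y,d(x,y)))}$ valid on $B(y,r_i)$ by Ahlfors regularity, one gets an estimate of the form
\[
I_{p,i}(f,\Om)\lesssim\sum_{k\ge 0}J_{2^k\delta_i},\qquad J_\delta:=\int_\Om\int_\Om\frac{\delta^p\ch_{\{|f(x)-f(y)|\ge t_0\delta\}}}{\mu(B(y,d(x,y)))d(x,y)^p}\,d\mu\,d\mu.
\]
Each $J_\delta$ with $\delta$ small is controlled by $\limsup_\delta\Lambda_{p,\delta}$, but the sum has $\sim\log(1/\delta_i)$ contributing terms even for $f\in L^\infty$ (it has infinitely many for general $f$), so it diverges as $\delta_i\to 0$. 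There is no hidden geometric decay: $J_\delta$ need not decay in $\delta$, it is merely bounded. This is precisely the divergence that the paper's proof is engineered to avoid: it integrates $\eps\delta^{\eps-1}\Lambda_{p,\delta}(f,\Om)$ over $\delta\in(0,\delta_0)$, which (by Fubini and a change of variables) produces the $\eps$-fractional quantity $\int\int\frac{\eps|f(x)-f(y)|^{p+\eps}}{d(x,y)^{Q+p}}$ together with a bounded integrating factor; the extra $\eps$ exactly compensates for the would-be logarithmic blow-up, and one then invokes Corollary \ref{cor:epsilon consequence} (the $\eps$-fractional lower bound), not Theorem \ref{thm:main previous}. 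Your plan to appeal directly to Theorem \ref{thm:main previous} is therefore calling the wrong tool: what you need is the $\eps$-weighted version.

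Two smaller points. First, your claim that one can choose $t_0$ and $c_0=\varphi(t_0)>0$ depending only on $C_\varphi$ and $p$ is not justified and can fail: $\varphi$ is only assumed increasing with $\int_0^\infty\varphi(t)t^{-1-p}\,dt\in[C_\varphi^{-1},C_\varphi]$, and one can construct $\varphi$ that is as small as desired on any fixed interval $[0,T]$ while satisfying \eqref{eq:varphi integral assumption}; the boundedness assumption \eqref{eq:varphi assumption bounded} is not in force for the lower bound. The paper sidesteps this by never isolating a single $t_0$: it keeps the integral $\int_{\delta_0}^\infty\varphi(t)t^{-1-p}\,dt\ge(1-\tau)C_\varphi^{-1}$ for $\delta_0$ small. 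Second, you omit the truncation step: the paper first proves the result for $f\in L^\infty(\Om)$, then passes to general $f\in L^p(\Om)$ via truncations $f_M$, using the coarea formula (for $p=1$) or lower semicontinuity of the $p$-energy; this step needs to be included.
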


This proof is very similar to that in the Euclidean setting,
see \cite[Proposition 2]{BN18}.

\begin{proof}
We can assume that
\[
\limsup_{\delta\to 0}\int_{\Om}\int_{\Om} \frac{\delta^p\varphi(|f(x)-f(y)|/\delta)}{d(x,y)^{Q+p}} 
\,d\mu(x)\,d\mu(y)=:M<\infty.
\]
First suppose that $f\in L^{\infty}(\Om)$.
Fix $\eps>0$.
Also fix $\tau>0$ and, recalling \eqref{eq:varphi integral assumption},
choose $\delta_0>0$ sufficiently small that
\begin{equation}\label{eq:delta 0 choice}
	\int_{\delta_0}^{\infty}\varphi(t)t^{-1-p}\,dt\ge C_\varphi^{-1}(1-\tau)
\end{equation}
and
\begin{equation}\label{eq:delta 0 choice 2}
\Lambda_{p,\delta}(f,\Om)\le \limsup_{\delta\to 0}\Lambda_{p,\delta}(f,\Om)+\tau\quad\textrm{for all }0<\delta<\delta_0.
\end{equation}
We have by Fubini's theorem
\begin{align*}
	&C_A\int_0^{\delta_0}\eps\delta^{\eps-1}\Lambda_{p,\delta}(f,\Om)\,d\delta\\
	&\ge \int_0^{\delta_0}\eps\int_{\Om}\int_{\Om} 
	\frac{\delta^{\eps-1+p}\varphi(|f(x)-f(y)|/\delta)}{d(x,y)^{Q+p}} 
	\,d\mu(x)\,d\mu(y)
	\,d\delta\\
	&=\int_{\Om}\int_{\Om}\frac{\eps}{d(x,y)^{Q+p}}
	\int_0^{\delta_0}\delta^{\eps-1+p}
	\varphi(|f(x)-f(y)|/\delta)
	\,d\delta\,d\mu(x)\,d\mu(y)\\
	&=\int_\Om\int_\Om\frac{\eps |f(x)-f(y)|^{p+\eps}}{d(x,y)^{Q+p}}
	\int_{|f(x)-f(y)|/\delta_0}^{\infty}\varphi(t)t^{-1-p-\eps}\,dt\,d\mu(x)\,d\mu(y).
\end{align*}
Thus
\begin{align*}
	&C_A\int_0^{\delta_0}\eps\delta^{\eps-1}\Lambda_{p,\delta}(f,\Om)\,d\delta\\
	&\ge \int_\Om\int_{\{x\in\Om\colon |f(x)-f(y)|<\delta_0^2\}}\frac{\eps |f(x)-f(y)|^{p+\eps}}{d(x,y)^{Q+p}}
	\int_{\delta_0}^{\infty}\varphi(t)t^{-1-p-\eps}\,dt\,d\mu(x)\,d\mu(y),
\end{align*}
and then
\begin{equation}\label{eq:eps delta estimate}
	\begin{split}
		&C_A\int_0^{\delta_0}\eps\delta^{\eps-1}\Lambda_{p,\delta}(f,\Om)\,d\delta\\
		&\quad +\int_\Om\int_{\{x\in\Om\colon |f(x)-f(y)|\ge \delta_0^2\}}
		\frac{\eps (2\Vert f\Vert_{L^{\infty}(\Om)})^{p+\eps}}{d(x,y)^{Q+p}}\,d\mu(x)\,d\mu(y)
		\int_{\delta_0}^{\infty}\varphi(t)t^{-1-p-\eps}\,dt\\
		&\qquad \ge \int_\Om\int_{\Om}\frac{\eps |f(x)-f(y)|^{p+\eps}}{d(x,y)^{Q+p}}\,d\mu(x)\,d\mu(y)
		\int_{\delta_0}^{\infty}\varphi(t)t^{-1-p-\eps}\,dt.
	\end{split}
\end{equation}
For $\alpha>0$, by the fact that $\varphi$ is increasing we have
\begin{align*}
	\Lambda_{p,\delta}(f,\Om)
	&\ge \int_\Om\int_{\{x\in\Om\colon |f(x)-f(y)|\ge \alpha\}}
	\frac{\delta^p\varphi(|f(x)-f(y)|/\delta)}{d(x,y)^{Q+p}}\,d\mu(x)\,d\mu(y)\\
	&\ge \delta^p \varphi(\alpha/\delta)\int_\Om\int_{\{x\in\Om\colon |f(x)-f(y)|\ge \alpha\}}
	\frac{1}{d(x,y)^{Q+p}}\,d\mu(x)\,d\mu(y).
\end{align*}
Choosing $\delta$ sufficiently small such that $\varphi(\alpha/\delta)>0$, we conclude
that for every $\alpha>0$, we have
\[
\int_\Om\int_{\{x\in\Om\colon |f(x)-f(y)|\ge \alpha\}}
\frac{1}{d(x,y)^{Q+p}}\,d\mu(x)\,d\mu(y)<\infty.
\]
This combined with \eqref{eq:delta 0 choice}, \eqref{eq:eps delta estimate},
and Corollary \ref{cor:epsilon consequence} gives
\[
C_A \liminf_{\eps\to 0}\int_0^{\delta_0}\eps\delta^{\eps-1}\Lambda_{p,\delta}(f,\Om)\,d\delta
\ge (1-\tau)C_\varphi^{-1}C^{-1} E_{p}(f,\Om).
\]
On the other hand,
\begin{align*}
	(\limsup_{\delta\to 0}\Lambda_{p,\delta}(f,\Om)+\tau )\delta_0^{\eps}
	&=(\limsup_{\delta\to 0}\Lambda_{p,\delta}(f,\Om)+\tau )\int_0^{\delta_0}\eps(\delta')^{\eps-1}\,d\delta' \\
	& \ge \int_0^{\delta_0}\eps\delta^{\eps-1}\Lambda_{p,\delta}(f,\Om)\,d\delta
	\quad\textrm{by }\eqref{eq:delta 0 choice 2}.
\end{align*}
Thus
\[
\limsup_{\delta\to 0}\Lambda_{p,\delta}(f,\Om)+\tau	\ge (1-\tau)C_\varphi^{-1}C^{-1}C_A^{-1} E_{p}(f,\Om).
\]
Letting $\tau\to 0$, we get
\[
\limsup_{\delta\to 0}\Lambda_{p,\delta}(f,\Om)\ge C_\varphi^{-1}C^{-1} C_A^{-1} E_{p}(f,\Om).
\]

Finally, we drop the assumption $f\in L^{\infty}(\Om)$.
Consider the truncations
\[
f_M:=\min\{M,\max\{-M,f\}\},\quad M>0.
\]
Since $\varphi$ is  increasing, we have
\[
\limsup_{\delta\to 0}\Lambda_{p,\delta}(f,\Om)
\ge\limsup_{\delta\to 0}\Lambda_{p,\delta}(f_M,\Om)
\ge C_\varphi^{-1}C^{-1} C_A^{-1} E_{p}(f_M,\Om).
\]
Letting $M\to\infty$, by using the coarea formula \eqref{eq:coarea} in the case $p=1$,
and e.g. \cite[Proposition 2.3]{BB} in the case $1<p<\infty$, we get
\[
\limsup_{\delta\to 0}\Lambda_{p,\delta}(f,\Om)
\ge C_\varphi^{-1}C^{-1} C_A^{-1} E_{p}(f,\Om).
\]
\end{proof}

\section{Upper bounds}\label{sec:upper bounds}
In this section we prove the upper bounds of our three main theorems.
As before, $\Om$ is an open subset of $X$.

\subsection{Upper bound of Theorem \ref{thm:main eps}}

The following theorem proves the upper bound of Theorem \ref{thm:main eps}.

\begin{theorem}
	Suppose  $1\le p<\infty$, $p<q<\infty$, and $\{\rho_i\}_{i=1}^{\infty}$ is a sequence of mollifiers satisfying condition 
	\eqref{eq:rho hat majorize}.
	Suppose also that $X$ supports a $(1,p)$--Poincar\'e inequality,
	$\Om\subset X$ is a bounded strong $p$-extension domain and strong $q$-extension domain, and
	$f\in \widehat{N}^{1,q}(\Om)$. Then
	\[
	\limsup_{i\to\infty}\Psi_{p,i}(f,\Om)
	\le C_2' E_{p}(f,\Om),
	\]
	where $C_2'$ is a constant
	depending only on $p$,
	the doubling constant of the measure, the constants in the
	Poincar\'e inequality, and the constant $C_{\rho}$ associated with the mollifiers.
\end{theorem}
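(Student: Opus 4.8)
The plan is to reduce to the Lipschitz case: we split $f$ into a Lipschitz ``main part'', where the extra exponent $\eps_i$ is harmless, and an ``error part'' which is small in $\widehat N^{1,q}(\Om)$ and can be absorbed using the higher integrability $q>p$ together with Theorem~\ref{thm:main previous} applied with exponent $q$. Two remarks will be used repeatedly. First, for each $i$ the quantity
\[
N_i(w):=\Big(\int_\Om\int_\Om\tfrac{|w(x)-w(y)|^{p+\eps_i}}{d(x,y)^{p+\eps_i}}\rho_i(x,y)\,d\mu(x)\,d\mu(y)\Big)^{1/(p+\eps_i)}
\]
is the $L^{p+\eps_i}$ norm of $(x,y)\mapsto(w(x)-w(y))/d(x,y)$ with respect to $\rho_i\,d\mu\otimes d\mu$, so (as $p+\eps_i\ge 1$) it is subadditive in $w$, and $\Psi_{p,i}(w,\Om)=N_i(w)^p$. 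Second, \eqref{eq:rho hat majorize} gives $\int_\Om\int_\Om\rho_i\,d\mu\,d\mu\le C_\rho\mu(\Om)$, so Hölder's inequality with exponents $q/(p+\eps_i)$ and $q/(q-p-\eps_i)$ (legitimate once $\eps_i<q-p$) yields
\[
\Psi_{p,i}(w,\Om)\le I_{q,i}(w,\Om)^{p/q}\,(C_\rho\mu(\Om))^{p(q-p-\eps_i)/(q(p+\eps_i))},
\]
where $I_{q,i}$ is the functional \eqref{eq:functional basic} with $p$ replaced by $q$. We may assume $\mu(\Om)>0$; since $f\in\widehat N^{1,q}(\Om)$ with $\Om$ bounded, $E_p(f,\Om)<\infty$. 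Finally, the assumed $(1,p)$--Poincar\'e inequality implies a $(p',p')$--Poincar\'e inequality for every $p'\ge p$ (by Hölder's inequality on the right-hand side followed by the self-improvement of Poincar\'e inequalities), so Theorem~\ref{thm:main previous} is available both with exponent $p$ and with exponent $q$.

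\emph{Lipschitz main part.} If $v$ is $L$-Lipschitz on $\Om$, then $|v(x)-v(y)|/d(x,y)\le L$, whence $\Psi_{p,i}(v,\Om)\le L^{p\eps_i/(p+\eps_i)}I_{p,i}(v,\Om)^{p/(p+\eps_i)}$. Since $\eps_i\searrow 0$, both $L^{p\eps_i/(p+\eps_i)}\to 1$ and $p/(p+\eps_i)\to 1$, and a short manipulation of $\limsup$ gives
\[
\limsup_{i\to\infty}\Psi_{p,i}(v,\Om)\le\limsup_{i\to\infty}I_{p,i}(v,\Om)\le C_2 E_p(v,\Om)
\]
by Theorem~\ref{thm:main previous}, using \eqref{eq:rho hat majorize}, that $\Om$ is a strong $p$-extension domain, and the Poincar\'e inequality just noted.

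\emph{Conclusion.} Using the strong $q$-extension property of $\Om$ we extend $f$ to $F\in\widehat N^{1,q}(X)$ and approximate $F$ by Lipschitz functions $v_k\in N^{1,q}(X)$ with $g_{F-v_k}\to 0$ in $L^q(X)$; restricting to $\Om$ gives Lipschitz functions $v_k$ on $\Om$ with $g_{f-v_k}\to 0$ in $L^q(\Om)$, hence in $L^p(\Om)$. By locality of minimal weak upper gradients, $E_p(v_k,\Om)\to E_p(f,\Om)$ when $1<p<\infty$, while for $p=1$ we use $E_1(v_k,\Om)=\Vert Dv_k\Vert(\Om)\le\int_\Om g_{v_k}\,d\mu\to\int_\Om g_f\,d\mu\le C_*\Vert Df\Vert(\Om)$ by \eqref{eq:from BV to Sobolev}; moreover $E_q(f-v_k,\Om)=\int_\Om g_{f-v_k}^q\,d\mu\to 0$. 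By subadditivity of $N_i$,
\[
\Psi_{p,i}(f,\Om)^{1/p}\le\Psi_{p,i}(v_k,\Om)^{1/p}+\Psi_{p,i}(f-v_k,\Om)^{1/p}.
\]
Taking $\limsup_{i\to\infty}$, bounding the first term by the Lipschitz case and the second by the Hölder estimate together with Theorem~\ref{thm:main previous} applied with exponent $q$ (legitimate since $\Om$ is also a strong $q$-extension domain), we obtain
\[
\limsup_{i\to\infty}\Psi_{p,i}(f,\Om)^{1/p}\le\big(C_2 E_p(v_k,\Om)\big)^{1/p}+\big((C_2 E_q(f-v_k,\Om))^{p/q}(C_\rho\mu(\Om))^{(q-p)/q}\big)^{1/p}.
\]
Letting $k\to\infty$ kills the second summand and sends $E_p(v_k,\Om)$ to $E_p(f,\Om)$ (resp. to a quantity $\le C_*\Vert Df\Vert(\Om)$ when $p=1$), giving $\limsup_{i\to\infty}\Psi_{p,i}(f,\Om)\le C_2'E_p(f,\Om)$ with $C_2'=C_2$ for $1<p<\infty$ and $C_2'=C_2C_*$ for $p=1$.

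The main obstacle is exactly the extra power $\eps_i$ on $|f(x)-f(y)|/d(x,y)$: for a general $f\in\widehat N^{1,q}(\Om)$ this factor is not uniformly bounded, so $\Psi_{p,i}(f,\Om)$ cannot be estimated directly, and one is forced to isolate a Lipschitz piece (where the factor is bounded and Theorem~\ref{thm:main previous} with exponent $p$ applies) from an $\widehat N^{1,q}$-small error piece (handled by Hölder's inequality and Theorem~\ref{thm:main previous} with exponent $q$); this is where the assumption $f\in\widehat N^{1,q}(\Om)$ and the strong $q$-extension property genuinely enter. A recurring minor technicality is that the outer exponent $p/(p+\eps_i)\to 1$, which must be tracked carefully through each passage to the limit superior.
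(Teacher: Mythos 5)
Your proof is correct, but it takes a genuinely different route from the paper's. The paper's argument is a one-line H\"older interpolation applied directly to $\Psi_{p,i}(f,\Om)$: writing $p+\eps_i$ as a convex combination of $p$ and $q$ with weights $\tfrac{q-p-\eps_i}{q-p}\to 1$ and $\tfrac{\eps_i}{q-p}\to 0$, and splitting $\rho_i$ with the same weights, H\"older with exponents $\tfrac{q-p}{q-p-\eps_i}$, $\tfrac{q-p}{\eps_i}$ yields
\[
\Psi_{p,i}(f,\Om)\le I_{p,i}(f,\Om)^{\frac{p(q-p-\eps_i)}{(q-p)(p+\eps_i)}}\,I_{q,i}(f,\Om)^{\frac{p\eps_i}{(q-p)(p+\eps_i)}},
\]
where the first exponent tends to $1$ and the second to $0$; Theorem~\ref{thm:main previous} applied at exponents $p$ and $q$ then finishes the proof immediately, with no decomposition of $f$ at all. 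Your approach replaces this interpolation in the exponent by an interpolation in the function: decompose $f=v_k+(f-v_k)$ with $v_k$ Lipschitz, control the Lipschitz piece because the extra factor $(|v_k(x)-v_k(y)|/d(x,y))^{\eps_i}$ is bounded, and absorb the error using the same H\"older estimate against $I_{q,i}$ but letting $k\to\infty$ (rather than $i\to\infty$) kill it, glued together by Minkowski's inequality for the $L^{p+\eps_i}(\rho_i\,d\mu\otimes d\mu)$ seminorm. Both proofs use Theorem~\ref{thm:main previous} at the two exponents $p$ and $q$ and rely on $\Om$ being a strong $p$- and strong $q$-extension domain; what yours needs in addition is density of Lipschitz functions in $N^{1,q}(X)$ (a standard but not trivial fact, requiring the Poincar\'e inequality and completeness, e.g.\ \cite[Theorem 5.1]{BB}), together with the comparison $g_{\cdot,p}\le g_{\cdot,q}$ when passing between exponents, and it produces a slightly worse constant ($C_2C_*$ rather than $C_2$ in the case $p=1$). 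So the paper's route is shorter and relies on fewer auxiliary facts, while yours makes the role of the two exponents more transparent at the cost of an approximation layer. One small point worth correcting in your closing remark: one is not actually ``forced'' to isolate a Lipschitz piece — the paper's H\"older splitting shows that the extra power $\eps_i$ can be handled directly without any decomposition of $f$.
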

\begin{proof}
	The space supports a $(p,p)$--Poincar\'e as well as a $(q,q)$--Poincar\'e inequality,
	see e.g. \cite[Theorem 4.21]{BB}.
	For sufficiently large $i$ such that $p+\eps_i<q$,
	we apply H\"older's inequality with the exponents
	\[
	\frac{q-p}{q-p-\eps_i}\quad\textrm{and}\quad \frac{q-p}{\eps_i}
	\]
	to obtain
	\begin{align*}
		&\left(\int_{\Om}\int_{\Om} \frac{|f(x)-f(y)|^{p+\eps_i}}{d(x,y)^{p+\eps_i}}\rho_i(x,y)\,d\mu(x)\,d\mu(y)
		\right)^{p/(p+\eps_i)}\\
		&=\left(\int_{\Om}\int_{\Om} \left(\frac{|f(x)-f(y)|}{d(x,y)}\right)^{\frac{pq-p^2-p\eps_i}{q-p}+\frac{q\eps_i}{q-p}}
		\rho_i(x,y)^{\frac{q-p-\eps_i}{q-p}+\frac{\eps_i}{q-p}}
		\,d\mu(x)\,d\mu(y)
		\right)^{p/(p+\eps_i)}\\
		& \qquad \le \left(\int_{\Om}\int_{\Om} \frac{|f(x)-f(y)|^p}{d(x,y)^p}\rho_i(x,y)\,d\mu(x)\,d\mu(y)
		\right)^{(q-p-\eps_i)p/((q-p)(p+\eps_i))}\\
		&\qquad \qquad \times\left(\int_{\Om}\int_{\Om} \frac{|f(x)-f(y)|^{q}}{d(x,y)^{q}}\rho_i(x,y)\,d\mu(x)\,d\mu(y)
		\right)^{\eps_i p/((q-p)(p+\eps_i))}.
	\end{align*}
	Applying Theorem \ref{thm:main previous} to both factors, we get
	\begin{align*}
		&\limsup_{i\to\infty}\left(\int_{\Om}\int_{\Om} \frac{|f(x)-f(y)|^{p+\eps_i}}{d(x,y)^{p+\eps_i}}\rho_i(x,y)\,d\mu(x)\,d\mu(y)
		\right)^{p/(p+\eps_i)}\\
		&\qquad \le C_2E_{p}(f,\Om)
		\times \left(C_2E_{q}(f,\Om)\right)^{0}\\
		&\qquad = C_2E_{p}(f,\Om).
	\end{align*}
\end{proof}

\subsection{Upper bound of Theorem \ref{thm:main p}}

\begin{lemma}\label{lem:weak ug coincide}
	Let $1\le p<\infty$ and $p<q<\infty$, and $X$ suppose supports a $(1,p)$-Poincar\'e inequality.
	Let $f\in N_{\loc}^{1,q}(X)$.
	Then the minimal $p$-weak upper gradient $g_{f,p}$ and the minimal $q$-weak upper gradient
	$g_{f,q}$ satisfy
	\[
	g_{f,p}\le g_{f,q}\le \widetilde{C}g_{f,p}\quad\textrm{ a.e.}
	\]
	for a constant $\widetilde{C}$
	depending only on the doubling constant of the measure and the constants in the Poincar\'e inequality.
\end{lemma}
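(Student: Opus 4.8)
The inequality $g_{f,p}\le g_{f,q}$ is the soft part, and I would obtain it from the fact that, since $p\le q$, every $q$-weak upper gradient of $f$ is also a $p$-weak upper gradient: if $\Gamma$ is the family of nonconstant curves along which $g_{f,q}$ violates \eqref{eq:definition of upper gradient}, then $\Mod_q(\Gamma)=0$, and for each ball $B(x_0,R)$ one has $\Mod_p(\{\gamma\in\Gamma:\gamma\subset B(x_0,R)\})=0$ by restricting admissible functions to $B(x_0,R)$ and applying H\"older's inequality with $\mu(B(x_0,R))<\infty$; since every curve has compact image, $\Gamma$ is a countable union of such subfamilies and $\Mod_p(\Gamma)=0$ by countable subadditivity. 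Minimality of $g_{f,p}$ then gives $g_{f,p}\le g_{f,q}$ $\mu$-a.e.\ (this is also recorded in \cite{BB}). In particular $g_{f,p}\le g_{f,q}\in L^q_{\loc}(X)$, so both gradients lie in $L^q_{\loc}(X)$.

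For the bound $g_{f,q}\le\widetilde C\,g_{f,p}$, the idea is to build, on an arbitrary bounded open set $U\subset X$, a sequence of locally Lipschitz discrete convolutions $h_i$ of $f$ at scales $r_i\searrow 0$, exactly as in the proof of Theorem \ref{thm:lower bound} (using the coverings of Lemma \ref{lem:covering lemma}; note there is no constraint here since we work in all of $X$). Combining the estimate \eqref{eq:upper gradient of discrete convolution} for $\Lip h_i$ from \eqref{eq:Lip} with the $(1,p)$-Poincar\'e inequality \eqref{eq:pp Poincare} applied on the balls $5B_j$ (valid for the $p$-weak upper gradient $g_{f,p}$), I get, on $U$,
\[
\Lip h_i\le C\sum_j\ch_{B_j}\Big(\vint{5\lambda B_j}g_{f,p}^p\,d\mu\Big)^{1/p},
\]
with $C$ depending only on the doubling and Poincar\'e constants. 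Raising this to the power $q$, using the bounded overlap of the balls (Lemma \ref{lem:covering lemma}) and Jensen's inequality $(\vint{}g_{f,p}^p\,d\mu)^{q/p}\le\vint{}g_{f,p}^q\,d\mu$ (here $q\ge p$), and summing, I obtain
\[
\int_U(\Lip h_i)^q\,d\mu\le C_0^{\,q}\int_{U(cr_i)}g_{f,p}^q\,d\mu
\]
with $C_0,c$ depending only on the data; here $U(\cdot)$ is as in \eqref{eq:neighborhood notation}. Since $h_i$ is locally Lipschitz, $g_{h_i,q}\le\Lip h_i$, and since $h_i\to f$ in $L^q(U)$ while $\{g_{h_i,q}\}$ is bounded in $L^q(U)$, reflexivity of $L^q(U)$ (here $q>1$, which is where $q>p$ enters) together with Mazur's lemma (Theorem \ref{thm:Mazur lemma}) and the stability of upper gradients under $L^q$-convergence (as in \cite[Proposition 2.3]{BB}) produces a $q$-weak upper gradient $g$ of a representative of $f$ in $U$ with $\|g\|_{L^q(U)}\le C_0\|g_{f,p}\|_{L^q(\overline U)}$ (the exponents $U(cr_i)$ shrink to $\overline U$). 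Hence $g_{f,q}\le g$ $\mu$-a.e.\ in $U$, so $\int_U g_{f,q}^q\,d\mu\le C_0^{\,q}\int_{\overline U}g_{f,p}^q\,d\mu$.

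Finally I would upgrade this to the pointwise inequality: applying the last estimate with $U=B(x,r)$ and using doubling gives $\vint{B(x,r)}g_{f,q}^q\,d\mu\le C_0^{\,q}C_d\vint{B(x,2r)}g_{f,p}^q\,d\mu$ for every $x\in X$ and $r>0$; letting $r\to 0$ at Lebesgue points of $g_{f,p}^q$ and $g_{f,q}^q$ (Lebesgue differentiation is available since $\mu$ is doubling) yields $g_{f,q}\le C_0C_d^{1/q}g_{f,p}\le\widetilde C\,g_{f,p}$ $\mu$-a.e., with $\widetilde C:=C_0C_d$ depending only on $C_d$, $C_P$, $\lambda$.

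The main obstacle is the middle step: manufacturing, out of the purely measure-theoretic information that $g_{f,p}$ is a $p$-weak upper gradient, an honest $q$-weak upper gradient of $f$ quantitatively controlled by $g_{f,p}$. This is exactly where the full strength of the $(1,p)$-Poincar\'e inequality is used — to convert $g_{f,p}$ into pointwise Lipschitz bounds for the discrete convolutions — and where $q>p$ is needed both to have Jensen's inequality available and to guarantee reflexivity of $L^q$. One must also take care to keep all constants independent of $q$; this is achieved by bounding the $L^q$-integral of $\Lip h_i$ directly via Jensen and bounded overlap, rather than through a maximal-function estimate, whose operator norm on $L^{q/p}$ would degenerate as $q\downarrow p$.
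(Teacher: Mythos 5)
Your proof is correct, and it takes a genuinely different route from the paper's. The paper does not actually give an argument for this lemma: it simply cites \cite[Proposition 2.44]{BB} for $g_{f,p}\le g_{f,q}$, and for the reverse comparison it invokes \cite[Corollary A.9]{BB} when $1<p<\infty$ and \cite[Proposition 4.26]{Che} when $p=1$. Those references rest on the deep Cheeger-type theorem that $\Lip f=g_f$ $\mu$-a.e.\ for locally Lipschitz $f$ in a PI space. You instead give a self-contained argument via discrete convolutions: the Poincar\'e inequality applied to $g_{f,p}$ controls $\Lip h_i$ pointwise, Jensen and bounded overlap yield $\int_U(\Lip h_i)^q\le C_0^q\int_{U(cr_i)}g_{f,p}^q$ with $C_0$ visibly independent of $p,q$, and then reflexivity of $L^q$ (here $q>p\ge 1$ guarantees $q>1$), Mazur's lemma, and the $L^q$-stability of upper gradients produce a $q$-weak upper gradient of $f$ controlled by $g_{f,p}$ in $L^q(U)$; Lebesgue differentiation upgrades the integral bound on shrinking balls to the pointwise inequality. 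Your approach has two advantages: it treats $p=1$ and $p>1$ uniformly (the paper has to patch the $p=1$ case separately), and it makes the $q$-independence of $\widetilde C$ transparent, which matters when the lemma is invoked with the exponent pair $(p,pq)$ in Theorem \ref{thm:one direction}. The only cost is that your argument is longer than a citation; it is also essentially the same machinery the paper already uses in the proof of Theorem \ref{thm:lower bound}, so the overlap makes this a reasonable trade. One small remark: when you say ``which is where $q>p$ enters'' for reflexivity, what you actually need is $q>1$, which follows from $q>p\ge 1$; the role of $q>p$ proper is in the Jensen step $\bigl(\vint{}g_{f,p}^p\bigr)^{q/p}\le\vint{}g_{f,p}^q$ and in guaranteeing $g_{f,p}\in L^q_{\loc}$ via $g_{f,p}\le g_{f,q}$.
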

\begin{proof}
	For the first inequality, see \cite[Proposition 2.44]{BB}.
	
	Then we prove the second inequality.
	For the case $1<p<\infty$, see \cite[Corollary A.9]{BB}.
	In the case $p=1$, we can follow the argument given in the proof of
	\cite[Corollary A.9]{BB}, but instead of \cite[Corollary A.8]{BB} we use 
	\cite[Proposition 4.26]{Che}.
\end{proof}

Now we can prove the upper bound of Theorem \ref{thm:main p}.

\begin{theorem}\label{thm:one direction}
	Let $1\le p<\infty$, $1<q<\infty$, and suppose $X$ supports a $(1,p)$-Poincar\'e inequality.
	Let $\Om\subset X$ be a bounded $pq$-extension domain,
	suppose $\{\rho_i\}_{i=1}^{\infty}$ is a sequence of mollifiers that satisfy
	\eqref{eq:rho hat majorize}, and let $f\in \widehat{N}^{1,pq}(\Om)$.
	Then
	\begin{equation}\label{eq:upper bound with U}
		\limsup_{i\to\infty}\int_{\Om}\left[\int_{\Om} \left(\frac{|f(x)-f(y)|^p}{d(x,y)^p}\right)^{q} 
		\rho_{i}(x,y) \,d\mu(x)\right]^{1/q}\,d\mu(y)
		\le C E_{p}(f,\Om),
	\end{equation}
	where $C$ is a constant depending only on $p$, the doubling constant of the measure,
	the constants in the Poincar\'e inequality, and the constant $C_{\rho}$
	associated with the mollifiers.
\end{theorem}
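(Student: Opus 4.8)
The plan is to transfer the problem to all of $X$ via the extension hypothesis, apply the telescoping pointwise estimate supplied by the Poincar\'e inequality, and then exploit the mixed-norm structure of $\Phi_{p,q,i}$ together with the concentration of the mass of $\rho_i$ at small scales as $i\to\infty$.

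First I would use the $pq$-extension property to obtain $F\in\widehat N^{1,pq}(X)$ with $F=f$ $\mu$-a.e.\ in $\Om$, so that $\Phi_{p,q,i}(f,\Om)=\Phi_{p,q,i}(F,\Om)$, and let $g$ be the minimal $p$-weak upper gradient of $F$ on $X$ (the minimal $1$-weak upper gradient when $p=1$), which agrees $\mu$-a.e.\ in $\Om$ with $g_f$. The decisive point --- and the reason the hypothesis is $f\in\widehat N^{1,pq}(\Om)$ rather than merely $f\in\widehat N^{1,p}(\Om)$ --- is that Lemma~\ref{lem:weak ug coincide}, applied with the $(1,p)$-Poincar\'e inequality and the exponents $p<pq$, shows that $g$ is comparable to the minimal $pq$-weak upper gradient of $F$; hence $g\in L^{pq}(X)$, so $g^p\in L^q(X)$ with $q>1$, whence $\mathcal M(g^p)\in L^q(X)$ and $\mathcal M_R(g^p)\in L^1(\Om)$ for every $R>0$ (using that $\Om$ is bounded). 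Since the $(1,p)$-Poincar\'e inequality implies a $(p,p)$-Poincar\'e inequality (\cite[Theorem~4.21]{BB}), a standard telescoping argument yields a threshold $\delta_0>0$ and a constant $C$, depending only on the doubling and Poincar\'e constants, with
\[
	\frac{|F(x)-F(y)|}{d(x,y)}\le C\Big(\big(\mathcal M_{Cd(x,y)}(g^p)(x)\big)^{1/p}+\big(\mathcal M_{Cd(x,y)}(g^p)(y)\big)^{1/p}\Big)
\]
for $\mu\times\mu$-a.e.\ $(x,y)$ with $d(x,y)<\delta_0$.

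Fix a small $\delta\in(0,\delta_0)$. I would split $\rho_i=\rho_i\ch_{\{d(x,y)<\delta\}}+\rho_i\ch_{\{d(x,y)\ge\delta\}}$ and, by Minkowski's inequality in $L^{pq}(\rho_i(\cdot,y)\,d\mu)$, split $\Phi_{p,q,i}(F,\Om)$ into a ``near'' and a ``far'' part. For the far part one uses $|F(x)-F(y)|/d(x,y)\le(|F(x)|+|F(y)|)/\delta$ together with \eqref{eq:rho hat majorize}, which forces $\int_X\rho_i(x,y)\ch_{\{d(x,y)\ge\delta\}}\,d\mu(x)\le\sum_{j\ge M_\delta}d_{i,j}\to0$; combined with the compactness of $\overline\Om$ (so that $\inf_{y\in\overline\Om}\mu(B(y,\delta/2))>0$) and $F\in L^{pq}(X)$, this part tends to $0$ as $i\to\infty$ for fixed $\delta$. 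For the near part the pointwise estimate gives $|F(x)-F(y)|/d(x,y)\le C(G_\delta(x)+G_\delta(y))$ with $G_\delta:=(\mathcal M_{C\delta}(g^p))^{1/p}$; Minkowski's inequality peels off the term built from $G_\delta(y)$, which contributes at most $C_\rho^{1/q}\int_\Om G_\delta^p\,d\mu$ after integrating in $y$, leaving the ``convolution'' term $J_{i,\delta}:=\int_\Om\big(\int_\Om G_\delta(x)^{pq}\rho_i(x,y)\,d\mu(x)\big)^{1/q}\,d\mu(y)$.

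Controlling $J_{i,\delta}$ is the main obstacle: the naive Hölder bound $J_{i,\delta}\le\mu(\Om)^{1-1/q}\big(\int_\Om G_\delta^{pq}\,d\mu\big)^{1/q}$ is too wasteful, producing $\|g\|_{L^{pq}(\Om)}^p$ instead of $\int_\Om g^p\,d\mu$ and not vanishing in the limit. Instead I would dominate, uniformly in $i$, $\int_\Om G_\delta(x)^{pq}\rho_i(x,y)\,d\mu(x)\le C_\rho\,\mathcal M(G_\delta^{pq}\ch_\Om)(y)$ via \eqref{eq:rho hat majorize}; since $G_\delta^{pq}=\mathcal M_{C\delta}(g^p)^q\in L^1(\Om)$ and the maximal operator is of weak type $(1,1)$, Kolmogorov's inequality and $1/q<1$ give that $(\mathcal M(G_\delta^{pq}\ch_\Om))^{1/q}\in L^1(\Om)$ --- an $i$-independent integrable majorant for the integrand of $J_{i,\delta}$. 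At the same time, for $\mu$-a.e.\ $y\in\Om$ (an interior point which is also a Lebesgue point of $G_\delta^{pq}$), the conditions $\sum_j d_{i,j}\le C_\rho$ and $\sum_{j\ge M}d_{i,j}\to0$ force $\limsup_{i\to\infty}\int_\Om G_\delta(x)^{pq}\rho_i(x,y)\,d\mu(x)\le C_\rho G_\delta^{pq}(y)$. Reverse Fatou then gives $\limsup_{i\to\infty}J_{i,\delta}\le C_\rho^{1/q}\int_\Om G_\delta^p\,d\mu$. Combining the estimates, $\limsup_{i\to\infty}\Phi_{p,q,i}(f,\Om)\le C\int_\Om G_\delta^p\,d\mu=C\int_\Om\mathcal M_{C\delta}(g^p)\,d\mu$ for every small $\delta$; letting $\delta\to0$ and using that $\mathcal M_{C\delta}(g^p)\downarrow g^p$ $\mu$-a.e.\ by Lebesgue differentiation, dominated by $\mathcal M_{C\delta_1}(g^p)\in L^1(\Om)$, the right-hand side converges to $\int_\Om g^p\,d\mu$, which equals $E_p(f,\Om)$ when $p>1$ and is at most $C_*\|Df\|(\Om)=C_*E_1(f,\Om)$ when $p=1$ by \eqref{eq:from BV to Sobolev}. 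This yields \eqref{eq:upper bound with U}.
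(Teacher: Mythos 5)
Your argument is correct, and it takes a genuinely different route from the paper's proof. The paper works with the $pq$-th power from the start: it invokes Keith--Zhong's self-improvement to obtain a $(1,pq')$-- and hence a $(pq,pq')$--Poincar\'e inequality with $q'<q$, proves a telescoping estimate for $|f(y)-f_{B(y,r)}|$ using this higher-order Poincar\'e inequality, estimates the annulus integral $\int\frac{|f(x)-f(y)|^{pq}}{d(x,y)^{pq}}\frac{\ch_{B(y,2^{j+1})\setminus B(y,2^j)}}{\mu(B(y,2^{j+1}))}$ directly, and produces a majorant in $L^1(\Om)$ because $\mathcal M(g^{pq'})\in L^{q/q'}$ with $q/q'>1$. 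You instead use the pointwise Haj\l{}asz-type inequality coming from the $(1,p)$--Poincar\'e inequality alone --- the same tool the paper uses in Theorem~\ref{thm:phi upper bound U} but not here --- so that the $q$-th power is taken \emph{after} applying the maximal function to $g^p$; since the hypothesis gives $g^p\in L^q$ with $q>1$, Hardy--Littlewood applies directly and Keith--Zhong is never needed. You then make a near/far split at a fixed scale $\delta$ (rather than the paper's scale-by-scale argument indexed by $j$), dispose of the far part via the tail condition on $\rho_i$ together with compactness of $\overline\Om$, and for the near part dominate the inner integral by $C_\rho\,\mathcal M(G_\delta^{pq}\ch_\Om)$ and use the weak-$(1,1)$ bound plus Kolmogorov's inequality to produce the integrable majorant needed for reverse Fatou, finishing by letting $\delta\to 0$. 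Both approaches exploit the $q>1$ hypothesis as the source of the extra integrability, but yours does so more transparently and dispenses with Keith--Zhong; the paper's version is more self-contained in that it never needs the pointwise maximal-function estimate and instead stays at the level of integral Poincar\'e inequalities.
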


\begin{proof}
	Since $\Om$ is a $pq$-extension domain, we can assume that $f\in \widehat{N}^{1,pq}(X)$.
	We can further assume that $f\in N^{1,pq}(X)$, since choice of a pointwise $\mu$-representative
	does not affect either side of \eqref{eq:upper bound with U}.
	Fix an upper gradient $g\in L^{pq}(X)$ of $f$.
	By H\"older's inequality, $X$ also supports a $(1,pq)$--Poincar\'e inequality.
	By Keith-Zhong \cite[Theorem 1.0.1]{KeZh},
	there exists $1<q'<q$ such that $X$ also supports a $(1,pq')$--Poincar\'e inequality.
	Choosing $q'$ sufficiently close to $q$, we then have that
	$X$ supports a $(pq,pq')$--Poincar\'e inequality; see \cite[Theorem 4.21]{BB}.
	Let us denote the constants of the $(pq,pq')$--Poincar\'e inequality by $C_P',\lambda'$;
	note that they only depend on the doubling constant of the measure and the constants $C_P,\lambda$
	of the original $(1,p)$-Poincar\'e inequality.
	
	Recall the definition of the Hardy--Littlewood maximal function from \eqref{eq:maximal function}.
	For every Lebesgue point $y\in\Om$ and every $r>0$, we estimate
	\begin{equation}\label{eq:telescope estimate}
		\begin{split}
			|f(y)-f_{B(y,r)}|
			&\le \sum_{k=1}^{\infty}|f_{B(y,2^{-k+1}r)}-f_{B(y,2^{-k}r)}|\\
			&\le C_d\sum_{k=1}^{\infty}\,\vint{B(y,2^{-k+1}r)}\big|f-f_{B(y,2^{-k+1}r)}\big|\,d\mu\\
			&\le C_P'C_d\sum_{k=1}^{\infty}2^{-k+1}r\left(\,\vint{B(y,2^{-k+1}\lambda' r)}g^{pq'}\,d\mu\right)^{1/(pq')}\\
			&\le 2C_P'C_dr \left(\mathcal M_{\lambda' r} g^{pq'}(y)\right)^{1/(pq')}.
		\end{split}
	\end{equation}
	For every $j\in\Z$
	and for every $y\in \Om$, we estimate
	\begin{align*}
		&\int_{X} \frac{|f(x)-f(y)|^{pq}}{d(x,y)^{pq}} \frac{\ch_{B(y,2^{j+1})\setminus B(y,2^{j})}(x)}{\mu(B(y,2^{j+1}))} \,d\mu(x)\\
		&\qquad \le 2^{pq}\int_{X} \frac{|f(x)-f_{B(y,2^{j+1})}|^{pq}}{d(x,y)^{pq}} \frac{\ch_{B(y,2^{j+1})\setminus B(y,2^{j})}(x)}{\mu(B(y,2^{j+1}))} \,d\mu(x)\\
		&\qquad \qquad+2^{pq}\int_{X} \frac{|f(y)-f_{B(y,2^{j+1})}|^{pq}}{d(x,y)^{pq}} \frac{\ch_{B(y,2^{j+1})\setminus B(y,2^{j})}(x)}{\mu(B(y,2^{j+1}))} \,d\mu(x)\\
		&\qquad \le \,2^{-jpq+pq}\vint{B(y,2^{j+1})} |f(x)-f_{B(y,2^{j+1})}|^{pq} \,d\mu(x)\\
		&\qquad \qquad+2^{-jpq+pq}\vint{B(y,2^{j+1})} |f(y)-f_{B(y,2^{j+1})}|^{pq}  \,d\mu(x)\\
		&\qquad \le 4^{pq} (C_P')^p \left(\vint{B(y,2^{j+1}\lambda' )} g(x)^{pq'} \,d\mu(x)\right)^{q/q'}
		+ (8C_d C_P')^{pq} \left(\mathcal M_{2^{j+1}\lambda'} g^{pq'}(y)\right)^{q/q'}\textrm{ by }\eqref{eq:telescope estimate}\\
		&\qquad\le 8^{1+pq} (C_d C_P')^{pq}\left(\mathcal M_{2^{j+1}\lambda'} g^{pq'}(y)\right)^{q/q'}.
	\end{align*}
	Using \eqref{eq:rho hat majorize}, we can now estimate
	\begin{equation}\label{eq:f and maximal function estimate}
		\begin{split}
			&\int_{\Om}\left[\int_{X} \frac{|f(x)-f(y)|^{pq}}{d(x,y)^{pq}} 
			\rho_{i}(x,y) \,d\mu(x)\right]^{1/q}\,d\mu(y)\\
			&\qquad\le \int_{\Om}\left[\sum_{j\in \Z}d_{i,j}\int_{X} \frac{|f(x)-f(y)|^{pq}}{d(x,y)^{pq}} 
			\frac{\ch_{B(y,2^{j+1})\setminus B(y,2^{j})}(x)}{\mu(B(y,2^{j+1}))} \,d\mu(x)\right]^{1/q}\,d\mu(y)\\
			&\qquad\le 8^{1+p} (C_d C_P')^p \int_{\Om}\left[\sum_{j\in \Z}d_{i,j}\left(\mathcal M_{2^{j+1}\lambda'}
			(g^{pq'}(y)\right)^{q/q'}\right]^{1/q}\,d\mu(y).
		\end{split}
	\end{equation}
	For every $i\in\N$, we can estimate
		\begin{align*}
			\sum_{j\in \Z}d_{i,j}\left(\mathcal M_{2^{j+1}\lambda'} g^{pq'}\right)^{q/q'}
			&\le \sum_{j\in \Z}d_{i,j}\left(\mathcal M g^{pq'}\right)^{q/q'}\\
			&\le C_{\rho} \left(\mathcal M g^{pq'}\right)^{q/q'}\\
			&\in L^1(X)
		\end{align*}
	by the Hardy--Littlewood maximal function theorem, see e.g. \cite[Theorem 3.5.6]{HKST}.
	Then also
	\begin{equation}\label{eq:L1 majorant}
		\left[\sum_{j\in \Z}d_{i,j}\left(\mathcal M_{2^{j+1}\lambda'}
	(g^{pq'}(y)\right)^{q/q'}\right]^{1/q}
	\le \left[C_{\rho} \left(\mathcal M g^{pq'}\right)^{q/q'}\right]^{1/q}
	\in L^1(\Om),
	\end{equation}
	since $\Om$ is bounded.
	Moreover, $\mathcal M_{2^{j+1}\lambda' } g^{pq'}(y)\to g^{pq'}(y)$ as $j\to -\infty$ for a.e. $y\in\Om$,
	and recall that
	\[
	\lim_{i\to\infty}\sum_{j\ge M} d_{i,j}=0
	\]
	for all $M\in\Z$, as given after \eqref{eq:rho hat majorize}.
	Thus
	\[
	\limsup_{i\to\infty}\sum_{j}d_{i,j}(\mathcal M_{2^{j+1}\lambda' } g^{pq'}(y))^{q/q'}\le C_\rho g^{pq}(y)
	\]
	for a.e. $y\in\Om$. From
	\eqref{eq:f and maximal function estimate} we get by an application 
	of Lebesgue's dominated convergence theorem, with the majorant given by \eqref{eq:L1 majorant},
	\[
	\limsup_{i\to\infty}\int_{\Om}\left[\int_{X} \frac{|f(x)-f(y)|^{pq}}{d(x,y)^{pq}} 
	\rho_{i}(x,y) \,d\mu(x)\right]^{1/q}\,d\mu(y)
	\le  8^{1+p} (C_d C_P')^p C_\rho^{1/q}\int_{\Om}g^p\,d\mu.
	\]
	Let $g_{f,pq}$ be the minimal $pq$-weak upper gradient of $f$,
	and let $g_{f,p}$ be the minimal $p$-weak upper gradient of $f$.
	Using e.g. \cite[Lemma 1.46]{BB}, we can find a sequence
	of upper gradients $g_l\ge g$ with $\Vert g_l-g_{f,pq}\Vert_{L^{pq}(X)}\to 0$,
	and then also $\Vert g_l-g_{f,pq}\Vert_{L^p(\Om)}\to 0$.
	Thus
	\begin{align*}
	\limsup_{i\to\infty}\int_{\Om}\left[\int_{\Om} \frac{|f(x)-f(y)|^{pq}}{d(x,y)^{pq}} 
	\rho_{i}(x,y) \,d\mu(x)\right]^{1/q}\,d\mu(y)
	&\le  8^{1+p} (C_d C_P')^p C_\rho^{1/q}\int_{\Om}g_{f,pq}^p\,d\mu\\
	&\le  8^{1+p} (C_d C_P')^p C_\rho^{1/q}\widetilde{C}^p\int_{\Om}g_{f,p}^p\,d\mu.
	\end{align*}
	by Lemma \ref{lem:weak ug coincide}.
	Finally, applying \eqref{eq:from BV to Sobolev} in the case  $p=1$, we get for all $1\le p<\infty$
	\[
	\limsup_{i\to\infty}\int_{\Om}\left[\int_{\Om} \frac{|f(x)-f(y)|^{pq}}{d(x,y)^{pq}} 
	\rho_{i}(x,y) \,d\mu(x)\right]^{1/q}\,d\mu(y)
	\le 8^{1+p} (C_d C_P')^p C_\rho^{1/p}C_*E_{p}(f,\Om).
	\]
	Note that the above constant $8^{1+p} (C_d C_P')^p C_\rho^{1/p}C_*$
	only depends on $C_d,C_P,\lambda,p,C_{\rho}$, as desired.
\end{proof}

\subsection{Upper bound of Theorem \ref{thm:main phi}}

In the following theorem we prove the upper bound of Theorem \ref{thm:main phi}.

\begin{theorem}\label{thm:phi upper bound U}
	Suppose $\varphi\colon [0,\infty)\to [0,\infty)$ satisfies assumptions
	\eqref{eq:varphi assumption increasing}, \eqref{eq:varphi assumption bounded},
	and \eqref{eq:varphi integral assumption}.
	Let $1\le p<\infty$,
	suppose $X$ supports a $(1,p)$--Poincar\'e inequality,
	let $\Om$ be bounded,
	and let
	$f\in \widehat{N}^{1,p}(X)$;
	in the case $p=1$ suppose additionally that $f\in \widehat{N}^{1,q}(X)$
	for some $q>1$.
	Then
	\[
	\limsup_{\delta\to 0}\int_{\Om}\int_{\Om} \frac{\delta^p\varphi(|f(x)-f(y)|/\delta)}{\mu(B(y,d(x,y)))d(x,y)^p} 
	\,d\mu(x)\,d\mu(y)
	\le C C_\varphi E_{p}(f,\Om),
	\]
	where $C$  depends  only on $p$, the doubling constant of the measure, the constants
	in the Poincar\'e inequality, and $C_{\varphi}$.
\end{theorem}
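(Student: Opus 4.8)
The plan is to localize the functional as $\delta\to 0$, bound the localized piece by a pointwise estimate on $f$ coming from the Poincar\'e inequality, and feed in two quantitative consequences of the hypotheses on $\varphi$. Since $\varphi$ is increasing, comparing $\int_t^{2t}\varphi(s)s^{-1-p}\,ds$ with $\int_0^\infty\varphi(s)s^{-1-p}\,ds$ forces, via \eqref{eq:varphi integral assumption}, the pointwise bound $\varphi(t)\le\frac{p}{1-2^{-p}}C_\varphi\,t^p$ for all $t\ge 0$; the same comparison carried out on the dyadic intervals $[2^k\nu,2^{k+1}\nu]$, $k\in\Z$, gives $\sum_{k\in\Z}\varphi(2^k\nu)(2^k\nu)^{-p}\le\frac{p}{1-2^{-p}}C_\varphi$ for every $\nu>0$. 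These two facts, together with the boundedness $\varphi\le b$, are the engine of the estimate.

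First I would reduce to $f\in N^{1,p}(X)$ (the choice of a pointwise representative is irrelevant) and fix an upper gradient $g$ and an exponent $s$: for $1<p<\infty$, take $g:=g_{f,p}$ and, by the Keith--Zhong theorem \cite{KeZh}, choose $1<s<p$ for which $X$ supports a $(1,s)$-Poincar\'e inequality; for $p=1$, take $g:=g_{f,q}$, which by Lemma \ref{lem:weak ug coincide} is a $1$-weak upper gradient, lies in $L^q(X)$, and satisfies $g\le\widetilde C g_{f,1}$, and set $s:=1$. Next comes the localization: for fixed $r_0>0$, the contribution to the functional of pairs with $d(x,y)\ge r_0$ is at most $b\,\delta^p\int_\Om\int_{\Om\setminus B(y,r_0)}d(x,y)^{-p}\mu(B(y,d(x,y)))^{-1}\,d\mu(x)\,d\mu(y)\le C\,b\,\delta^p r_0^{-p}\mu(\Om)$, by a straightforward dyadic-annulus computation, and this tends to $0$ as $\delta\to 0$. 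Hence it suffices to estimate the integral over $\{d(x,y)<r_0\}$.

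On that set I would apply the telescoping estimate built from the $(1,s)$-Poincar\'e inequality: for $\mu$-a.e.\ $x,y$ with $d(x,y)<r_0$ one has $|f(x)-f(y)|\le C\,d(x,y)\bigl(G(x)+G(y)\bigr)$ with $G:=(\mathcal M_{\lambda r_0}g^{\,s})^{1/s}$. Since $\varphi$ is increasing, the integrand is then dominated by $\bigl[\delta^p\varphi(2Cd(x,y)G(x)/\delta)+\delta^p\varphi(2Cd(x,y)G(y)/\delta)\bigr]\big/\bigl(d(x,y)^p\mu(B(y,d(x,y)))\bigr)$. For the $G(y)$-term I keep $y$ as the outer variable and split the $x$-integral into the annuli $B(y,2^{j+1})\setminus B(y,2^j)$; using $d(x,y)\approx 2^j$, doubling, and $\varphi$ increasing, the inner integral is at most $C_d\sum_j\delta^p2^{-jp}\varphi(2^{j+2}CG(y)/\delta)$, which by the dyadic sum bound (with $\nu=4CG(y)/\delta$) is at most $C\,C_\varphi\,G(y)^p$, \emph{uniformly in $\delta$}. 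The $G(x)$-term is treated symmetrically after interchanging the order of integration (so $x$ is outer) and replacing $\mu(B(y,d(x,y)))$ by $\mu(B(x,d(x,y)))$ up to a doubling constant, contributing $C\,C_\varphi\,G(x)^p$ after the $y$-integration. Integrating, for every $r_0>0$,
\[
\limsup_{\delta\to 0}\Lambda_{p,\delta}(f,\Om)\le C\,C_\varphi\int_\Om G^p\,d\mu
=C\,C_\varphi\int_\Om(\mathcal M_{\lambda r_0}g^{\,s})^{p/s}\,d\mu .
\]

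It remains to let $r_0\to 0$. Since $p/s>1$, the Hardy--Littlewood maximal theorem gives $(\mathcal M g^{\,s})^{p/s}\in L^1(X)$, and $(\mathcal M_{\lambda r_0}g^{\,s}(y))^{1/s}\downarrow g(y)$ as $r_0\to 0$ at $\mu$-a.e.\ $y$ by Lebesgue differentiation; dominated convergence (using $\Om$ bounded, and for $p=1$ the integrability $g\in L^q(X)$, so that $\mathcal M g\in L^1(\Om)$) yields $\int_\Om(\mathcal M_{\lambda r_0}g^{\,s})^{p/s}\,d\mu\to\int_\Om g^p\,d\mu$. For $1<p<\infty$ this equals $E_p(f,\Om)$ by locality of the minimal $p$-weak upper gradient; for $p=1$, $\int_\Om g\,d\mu=\int_\Om g_{f,q}\,d\mu\le\widetilde C\int_\Om g_{f,1}\,d\mu\le\widetilde C C_*\Vert Df\Vert(\Om)$ by Lemma \ref{lem:weak ug coincide} and \eqref{eq:from BV to Sobolev}, which is a constant multiple of $E_1(f,\Om)$. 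This gives the theorem with the asserted dependence of the constant. The main obstacle is the nonlinearity of $\varphi$: one cannot bring the average of $f$ inside $\varphi$, so $|f(x)-f(y)|$ must be replaced by a maximal-function bound, and the clean right-hand side $\int_\Om g_f^p\,d\mu$ is recovered only after the limit $r_0\to 0$; the endpoint $p=1$ is the delicate case, since Keith--Zhong does not improve the $(1,1)$-Poincar\'e inequality below exponent $1$, which is precisely why the extra hypothesis $f\in\widehat{N}^{1,q}(X)$ for some $q>1$ is imposed.
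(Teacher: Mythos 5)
Your proof is correct and follows essentially the same strategy as the paper: Keith--Zhong self-improvement (with the special case $p=1$ handled via the auxiliary exponent $q$ and Lemma~\ref{lem:weak ug coincide}), the telescoping pointwise estimate $|f(x)-f(y)|\le C\,d(x,y)(G(x)+G(y))$ from the Poincar\'e inequality, a split into $\{d(x,y)<r_0\}$ and $\{d(x,y)\ge r_0\}$ with the far piece killed by $\varphi\le b$, a dyadic-annulus reduction of the near piece to the sum bound $\sum_{k\in\Z}\varphi(2^k\nu)(2^k\nu)^{-p}\lesssim C_\varphi$ (the paper's \eqref{eq:integral sum estimate}), and finally $r_0\to 0$ via dominated convergence and \eqref{eq:from BV to Sobolev}. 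The only cosmetic differences are that you use a fixed maximal-function radius $\lambda r_0$ rather than the paper's $2\lambda d(x,y)$ (equivalent, since $d(x,y)<r_0$ there), and you state a pointwise bound $\varphi(t)\lesssim C_\varphi t^p$ that is not actually needed in the argument.
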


\begin{proof}
	When $1<p<\infty$, by Keith-Zhong \cite[Theorem 1.0.1]{KeZh}
	there exists $1<p'<p$ such that $X$ supports a $(1,p')$--Poincar\'e inequality.
	When $p=1$, we let also $p'=1$.
	For simplicity, we still denote the constants by $C_P,\lambda$.
	
	First note that using \eqref{eq:varphi assumption increasing} and
	\eqref{eq:varphi integral assumption}, for any $0<\alpha<\infty$ we get
	\begin{equation}\label{eq:integral sum estimate}
		C_{\varphi}\ge \int_0^{\infty}\varphi(t)t^{-1-p}\,dt
		\ge 2^{-1-p}\alpha^{-p}\sum_{j\in\Z}\varphi(2^j \alpha) 2^{-jp}.
	\end{equation}
	We can assume that $f\in N^{1,p}(X)$,
	and in the case $p=1$ we can assume that  $f\in N^{1,1}(X)\cap N^{1,q}(X)$;
	note that the choice of pointwise representatives does not cause problems, by e.g.
	\cite[Propositions 1.59 \& 1.61]{BB}.
	For a.e. $x,y\in X$, we have
	\[
	|f(x)-f(y)|\le C'd(x,y)
	[(\mathcal M_{2\lambda d(x,y)}g_f(x)^{p'})^{1/p'}+(\mathcal M_{2\lambda d(x,y)}g_f(y)^{p'})^{1/p'}],
	\]
	where $g_f$ is the minimal $p$-weak upper gradient of $f$ and
	$C'$ is a constant depending only on $C_d$, $C_P$, and $\lambda$;
	see e.g. \cite[Proof of Theorem 5.1]{BB}. 
	Using this and the fact that $\varphi$ is increasing, we estimate
	(we denote briefly $\{d(x,y)<r\}:=\{(x,y)\in \Om\times \Om\colon d(x,y)<r\}$)
	\begin{equation}\label{eq:byr estimate}
		\begin{split}
			&\iint_{\{ d(x,y)<r\}} \frac{\delta^p\varphi(|f(x)-f(y)|/\delta)}
			{\mu(B(y,d(x,y)))d(x,y)^p} \,d\mu(x)\,d\mu(y)\\
			& \le \iint_{\{d(x,y)<r\}} \frac{\delta^p \varphi\big(C' d(x,y)((\mathcal M_{2\lambda d(x,y)}g_f(x)^{p'})^{\frac{1}{p'}}+(\mathcal M_{2\lambda d(x,y)}g_f(y)^{p'})^{\frac{1}{p'}} /\delta\big)}{\mu(B(y,d(x,y)))d(x,y)^p} \,d\mu(x)\,d\mu(y)\\
			& \le \int_{\Om} \int_{B(y,r)}\frac{\delta^p \varphi\big(2C'd(x,y)
				(\mathcal M_{2\lambda r} g_f(y)^{p'})^{\frac{1}{p'}} /\delta\big)}{\mu(B(y,d(x,y)))d(x,y)^p} \,d\mu(x)\,d\mu(y)\\
			&\qquad +\int_{\Om} \int_{B(x,r)}
			\frac{\delta^p \varphi\big(2C'd(x,y)
				(\mathcal M_{2\lambda r} g_f(x)^{p'})^{\frac{1}{p'}} /\delta\big)}{\mu(B(y,d(x,y)))d(x,y)^p} \,d\mu(y)\,d\mu(x)
		\end{split}
	\end{equation}
	using again the fact that $\varphi$ is increasing.
	Note that
	\begin{align*}
		& \int_{B(y,r)}\frac{\delta^p \varphi\big(2C'd(x,y)
			(\mathcal M_{2\lambda r} g_f(y)^{p'})^{\frac{1}{p'}} /\delta\big)}{\mu(B(y,d(x,y)))d(x,y)^p} \,d\mu(x)\\
		&\qquad \le \int_{X} \frac{\delta^p \varphi\big(2C'd(x,y)
			(\mathcal M_{2\lambda r} g_f(y)^{p'})^{\frac{1}{p'}} /\delta\big)}{\mu(B(y,d(x,y)))d(x,y)^p} \,d\mu(x)\\
		& \qquad = \sum_{j\in\Z}\int_{B(y,2^j)\setminus B(y,2^{j-1})} \frac{\delta^p \varphi\big(2C'd(x,y)
			(\mathcal M_{2\lambda r} g_f(y)^{p'})^{\frac{1}{p'}} /\delta\big)}{\mu(B(y,d(x,y)))d(x,y)^p} \,d\mu(x)\\
		& \qquad  \le C_d \delta^p\sum_{j\in\Z} \frac{\varphi(2^{j+1} C'(\mathcal M_{2\lambda r} g_f(y)^{p'})^{\frac{1}{p'}} /\delta)}{2^{(j-1)p}}\\
		& \qquad  \le 2^{3p+1}(C')^pC_d C_\varphi(\mathcal M_{2\lambda r} g_f(y)^{p'})^{\frac{p}{p'}}
		\quad\textrm{by }\eqref{eq:integral sum estimate}.
	\end{align*}
	Estimating the second term of \eqref{eq:byr estimate} analogously, we get
	\[
	\iint_{\{d(x,y)<r\}} \frac{\delta^p\varphi(|f(x)-f(y)|/\delta)}
	{\mu(B(y,d(x,y)))d(x,y)^p} \,d\mu(x)\,d\mu(y)
	\le 2^{3p+2}(C')^pC_d C_\varphi\int_\Om (\mathcal M_{2\lambda r} g_f^{p'})^{\frac{p}{p'}}\,d\mu.
	\]
	On the other hand, we have
	\begin{equation}\label{eq:byr complement estimate}
		\begin{split}
			&\int_{\Om\setminus B(y,r)}\frac{\delta^p \varphi(|f(x)-f(y)|/\delta)}
			{\mu(B(y,d(x,y)))d(x,y)^p}\,d\mu(x)\\
			& \qquad =\sum_{j\in\N}\int_{B(y,2^j r)\setminus B(y,2^{j-1}r)}\frac{\delta^p \varphi(|f(x)-f(y)|/\delta)}
			{\mu(B(y,d(x,y)))d(x,y)^p}\,d\mu(x)\\
			&\qquad \le  C_d\sum_{j=1}^{\infty}\frac{\delta^p b}{(2^{j-1}r)^p}
			\quad\textrm{by }\eqref{eq:varphi assumption bounded}\\
			&\qquad =\frac{2C_d\delta^p b}{r^p}.
		\end{split}
	\end{equation}
	In total, we get
	\begin{equation}\label{eq:delta r terms}
		\begin{split}
			&\int_{\Om}\int_{\Om} \frac{\delta^p\varphi(|f(x)-f(y)|/\delta)}
			{\mu(B(y,d(x,y)))d(x,y)^p} \,d\mu(x)\,d\mu(y)\\
			&\qquad \le 2^{2p+3} (C')^pC_d C_\varphi\int_\Om (\mathcal M_{2\lambda r} g_f^{p'})^{\frac{p}{p'}}\,d\mu
			+\frac{2C_d\delta^p b}{r^p}\mu(\Om).
		\end{split}
	\end{equation}
	In the case $1<p<\infty$, note that $(\mathcal M_{2\lambda r} g_f^{p'})^{\frac{p}{p'}}\in L^1(X)$
	by the Hardy--Littlewood maximal theorem,
	and that $\mathcal M_{2\lambda r} g_f(x)\to g_f(x)$ as $r\to 0$ for $\mu$-a.e. $x\in X$.
	Thus by \eqref{eq:delta r terms} and Lebesgue's dominated convergence,
	\begin{align*}
		&\limsup_{\delta\to 0}\int_{\Om}\int_{\Om} \frac{\delta^p\varphi(|f(x)-f(y)|/\delta)}
		{\mu(B(y,d(x,y)))d(x,y)^p} \,d\mu(x)\,d\mu(y)\\
		&\qquad \le 2^{3p+2}(C')^pC_d C_\varphi \int_\Om (\mathcal M_{2\lambda r} g_f^{p'})^{\frac{p}{p'}}\,d\mu\\
		&\qquad \to 2^{3p+2}(C')^pC_d C_\varphi \int_{\Om} g_f^p\,d\mu\quad\textrm{as }r\to 0\\
		&\qquad = 2^{3p+2}(C')^pC_d C_\varphi E_{p}(f,\Om).
	\end{align*}
	
	Then suppose $p=1$;
	recall that in this case we assume that	$f\in N^{1,1}(X)\cap N^{1,q}(X)$.
	By Lemma \ref{lem:weak ug coincide}, the minimal $1$-weak and minimal $q$-weak
	upper gradients of $f$ satisfy $g_{f,1}\le g_{f,q}$ a.e., and so in \eqref{eq:delta r terms}
	we can replace $g_{f,1}$ with $g_{f,q}$.
	Note that $\mathcal M_{2\lambda r} g_{f,q}\in L^q(X)$ by the Hardy--Littlewood maximal theorem,
	so that $\mathcal M_{2\lambda r} g_{f,q}\in L_{\loc}^1(X)$
	and in particular $\mathcal M_{2\lambda r} g_{f,q}\in L^1(\Om)$, and that
	$\mathcal M_{2\lambda r} g_{f,q}(x)\to g_{f,q}(x)$ as $r\to 0$ for $\mu$-a.e. $x\in X$.
	From \eqref{eq:delta r terms} and Lebesgue's dominated convergence, we get
	\begin{align*}
		&\limsup_{\delta\to 0}\int_{\Om}\int_{\Om} \frac{\delta\varphi(|f(x)-f(y)|/\delta)}
		{\mu(B(y,d(x,y)))d(x,y)} \,d\mu(x)\,d\mu(y)\\
		&\qquad \le 32  C' C_d C_\varphi\int_\Om \mathcal M_{2\lambda r} g_{f,q}\,d\mu\\
		&\qquad \to 32  C' C_d C_\varphi\int_\Om g_{f,q}\,d\mu\quad\textrm{as }r\to 0\\
		&\qquad \le 32  C' C_d C_\varphi \widetilde{C}\int_\Om g_{f,1}\,d\mu\quad\textrm{by Lemma }\ref{lem:weak ug coincide}\\
		&\qquad \le 32  C' C_d C_\varphi \widetilde{C} C_*\Vert Df\Vert(\Om)
	\end{align*}
	by \eqref{eq:from BV to Sobolev}.
\end{proof}

\section{Example}\label{sec:counterexample}

For a function $f\in \widehat{N}^{1,q}(\Om)$ with $1<q<\infty$,
and with the choice $p=1$,
the conclusion of Theorem \ref{thm:main p} takes the form
\begin{equation}\label{eq:recite main result}
	\begin{split}
		C_1'' \Vert Df\Vert(\Om)
		&\le \liminf_{i\to\infty}\int_{\Om}
		\left[\int_{\Om} \frac{|f(x)-f(y)|^q}{d(x,y)^q}\rho_i(x,y)\,d\mu(x)\right]^{1/q}\,d\mu(y)\\
		&\le \limsup_{i\to\infty}\int_{\Om}
		\left[\int_{\Om} \frac{|f(x)-f(y)|^q}{d(x,y)^q}\rho_i(x,y)\,d\mu(x)\right]^{1/q}\,d\mu(y)
		\le C_2'' \Vert Df\Vert(\Om).
	\end{split}
\end{equation}
One natural question is whether $C_1''=C_2''$ might hold.
In the Euclidean theory, see Brezis--Nguyen \cite[Proposition 9]{BN1},
we know that for every $1<q<\infty$, smooth and bounded $\Om\subset \R^n$,
and every $f\in W^{1,q}(\Om)$, we indeed have 
\begin{equation}\label{eq:Euclidean 1d result}
	\lim_{i\to\infty}\int_{\Om}\left[\int_{\Om} \frac{|f(x)-f(y)|^q}{|x-y|^q}\rho^*_i(|x-y|)\,
	dx\right]^{1/q}\,dy
	=K_{q,n}\Vert Df\Vert(\Om),
\end{equation}
where
\[
K_{q,n}:=\left(\int_{\mathbb{S}^{n-1}}|\sigma\cdot e|^q\,d\mathcal H^{n-1}(\sigma)\right)^{1/q},
\]
with $\mathbb{S}^{n-1}\subset \R^n$ is the unit sphere, $e\in \mathbb{S}^{n-1}$
is arbitrary, and
$\mathcal H^{n-1}$ is the $n-1$-dimensional Hausdorff measure.
Here the mollifiers $\rho^*_i$ are required to satisfy \eqref{eq:intro rhoi conditions}.
However, if $f$ is only a BV function, it may happen that \eqref{eq:Euclidean 1d result} fails,
while a (different) limit exists, see \cite[Proposition 10]{BN1}.
Many other counterexamples in the same spirit are given in e.g. \cite{BN1, BN18}.
In metric spaces things can go even more seriously wrong, as we will now demonstrate. 

Consider the real line equipped with the Euclidean metric and the one-dimensional Lebesgue
measure $\mathcal L^1$.
Consider the sequence of mollifiers
\[
\rho_i(x,y):=\rho^*_i(|x-y|):=\frac{\ch_{[0,1/i]}(|x-y|)}{1/i},\quad x,y\in \R,\quad i\in\N.
\]
As noted before Corollary \ref{cor:Brezis}, these mollifiers satisfy assumptions
\eqref{eq:rho hat minorize} and \eqref{eq:rho hat majorize}.
The mollifiers $\rho^*_i$ obviously also satisfy \eqref{eq:intro rhoi conditions}.
The example below, inspired by \cite[Example 4.8]{HKLL},
shows that even in a compact PI space and for a Lipschitz function $f$, 
 the desired equality $C_1''=C_2''$ in \eqref{eq:recite main result} may fail,
and so in particular the Euclidean result \eqref{eq:Euclidean 1d result} does not extend to metric spaces.

\begin{example}\label{ex:fat Cantor}
	Consider the space $X=[0,1]$, equipped with the Euclidean metric and a weighted measure
	$\mu$ that we will next define.
	First we construct a fat Cantor set $A$ as follows.
	Let $A_0:=[0,1]$.
	Then in each step
	$i\in\N$, we remove from $A_{i-1}$ the set $D_i$, which consists of $2^{i-1}$ open intervals of length $2^{-2i}$,
	centered at the middle points of the intervals that make up $A_{i-1}$.
	We denote $L_i:=\mathcal L^1(A_i)$,
	and we let $A=\bigcap_{i=1}^{\infty}A_i$. Then we have
	\[
	L:=\mathcal L^1(A) =\lim_{i\to\infty}L_i=1/2.
	\]
	Then define the weight
	\[
	w:=
	\begin{cases}
		2\quad\textrm{in }A,\\
		1\quad\textrm{in }X\setminus A,
	\end{cases}
	\]
	and equip the space $X$ with the weighted Lebesgue measure $d\mu:=w\,d\mathcal L^1$.
	Obviously the measure is doubling, and $X$ supports a $(1,1)$--Poincar\'e inequality.
	
	Let
	\[
	g:=2\ch_{A} \quad\textrm{and}\quad g_i=\frac{1}{L_{i-1}-L_i}\ch_{D_i},\ \ i\in\N.
	\]
	Then
	\[
	\int_0^1 g(s)\,ds= \int_0^1 g_i(s)\,ds=1\quad\textrm{for all }i\in\N.
	\]
	Next define the function
	\[
	f(x):=\int_0^x g(s)\, ds,\quad x\in [0,1].
	\]
	Now $f\in \Lip(X)$, since $g$ is bounded.
	Approximate $f$ with the functions
	\[
	f_i(x):=\int_0^x g_i(s)\, ds,\quad x\in [0,1],\quad i\in\N.
	\]
	Now also $f_i\in \Lip(X)$, and $f_i\to f$ uniformly.
	This can be seen as follows. Given $i\in\N$, the set $A_i$ consists of $2^i$ intervals of length
	$L_i/2^i$. If $I$ is one of these intervals, we have
	\[
	2^{-i}=\int_I g(s) \,ds =\int_I g_{i+1}(s) \,ds,
	\]
	and also
	\[
	\int_{X\setminus A_i}g\,d\mathcal L^1 =0=\int_{X\setminus A_i}g_{i+1}\,d\mathcal L^1 .
	\]
	Hence $f_{i+1}=f$ in  $X\setminus A_i$, and elsewhere $|f_{i+1}-f|$ is at most $2^{-i}$.
	In particular, $f_i\to f$ in $L^1(X)$ and so
	\[
	\Vert Df\Vert(X)\le \lim_{i\to\infty} \int_0^1 g_i\,d\mu
	= \lim_{i\to\infty} \int_0^1 g_i\,d\mathcal L^1=1.
	\]
	By Rademacher's theorem, for $\mathcal L^1$-a.e. $y\in A$, $f$ is differentiable at $y$ and so we have
	\[
	\lim_{i\to\infty}\left(\int_X \frac{|f(x)-f(y)|^q}{|x-y|^q}
	\rho_i(x,y)\,d\mathcal L^1(x)\right)^{1/q}=(2|f'(y)|^q)^{1/q}=2^{1/q}|f'(y)|.
	\]
	Thus
	\begin{align*}
		&\liminf_{i\to\infty}
		\int_X \left[\int_X\frac{|f(x)-f(y)|^q}{|x-y|^q}\rho_i(x,y)\,d\mu(x)\right]^{1/q}\,d\mu(y)\\
		&\qquad \ge 2 \liminf_{i\to\infty}\int_A \left[\int_X\frac{|f(x)-f(y)|^q}{|x-y|^q}\rho_i(x,y)\,d\mathcal L^1(x)\right]^{1/q}
		\,d\mathcal L^1(y)\\
		&\qquad \ge 2\int_A \liminf_{i\to\infty} \left[\int_X\frac{|f(x)-f(y)|^q}{|x-y|^q}\rho_i(x,y)\,d\mathcal L^1(x)\right]^{1/q}
		\,d\mathcal L^1(y)\quad\textrm{by Fatou}\\
		&\qquad= 2^{1+1/q}\int_A |f'(y)|\,d\mathcal L^1(y)\\
		&\qquad=2^{1+1/q}.
	\end{align*}
	We conclude that
	\begin{equation}\label{eq:comparison with constant 2}
		\liminf_{i\to\infty}
		\int_X \left[\int_X\frac{|f(x)-f(y)|^q}{|x-y|^q}\rho_i(x,y)\,d\mu(x)\right]^{1/q}\,d\mu(y)
		\ge 2^{1+1/q}\Vert Df\Vert(X).
	\end{equation}
	On the other hand, consider any nonzero Lipschitz function $f_0$ supported in $(3/8,5/8)$.
	For such a function, from \eqref{eq:Euclidean 1d result} we have
	\[
	\lim_{i\to\infty}\int_{X}\left[\int_{X} \frac{|f_0(x)-f_0(y)|^q}{|x-y|^q}\rho_i(x,y)\,d\mu(x)\right]^{1/q}\,d\mu(y)
	=2^{1/q}\Vert Df_0\Vert (X)\in (0,\infty),
	\]
	since both sides are equal to the classical quantities, that is,
	the quantities obtained when the measure $\mu$ is $\mathcal L^1$.
	This combined with \eqref{eq:comparison with constant 2} shows that we cannot have
	$C_1''=C_2''$ in \eqref{eq:recite main result}.
\end{example}

\end{document}